\renewcommand{\AA}{\mathbb{A}}
\newcommand{\NN}{\mathbb{N}}
\newcommand{\ZZ}{\mathbb{Z}}
\newcommand{\RR}{\mathbb{R}}
\newcommand{\CC}{\mathbb{C}}
\newcommand{\PP}{\mathbb{P}}
\newcommand{\TT}{\mathbb{T}}
\newcommand{\LL}{\mathbb{L}}
\newcommand{\GG}{\mathbb{G}}
\newcommand{\C}{\mathcal{C}}
\newcommand{\D}{\mathcal{D}}
\DeclareMathOperator{\Hom}{Hom}
\DeclareMathOperator{\id}{id}
\DeclareMathOperator*{\colim}{colim}
\DeclareMathOperator*{\hocolim}{hocolim}
\DeclareMathOperator*{\holim}{holim}
\DeclareMathOperator{\spec}{Spec}
\DeclareMathOperator{\sPre}{sPre}
\DeclareMathOperator{\sShv}{sShv}
\newcommand{\psPre}{\sPre_{\cdot}}
\newcommand{\GTop}{{\mathrm{G}\mathcal{T}\mathrm{op}}}
\newcommand{\Top}{{\mathcal{T}\mathrm{op}}}
\newcommand{\op}{{\mathrm{op}}}
\newcommand{\sset}{\mathrm{s}\mathcal{S}\mathrm{et}}
\newcommand{\smk}{\mathcal{S}\mathrm{m}/k}
\newcommand{\Gsmk}{G\mathcal{S}\mathrm{m}/k}
\newcommand{\sm}[1]{\mathcal{S}\mathrm{m}/\mathrm{#1}}
\newcommand{\schk}{\mathcal{S}\mathrm{ch}_\mathrm{k}}
\newcommand{\SHKG}{\mathcal{SH}(k,G)}
\newcommand{\set}{\mathcal{S}\mathrm{et}}
\newcommand{\Nspec}{\mathcal{S}\mathrm{p}^\mathbb{N}}
\renewcommand{\smash}{\wedge}
\newcommand{\tr}{_{\mathrm{tr}}}
\newcommand{\fixed}{_{\mathrm{fixed}}}
\newcommand{\res}{\mathrm{res}}
\newcommand{\ind}{\mathrm{ind}}
\theoremstyle{plain} 
\newtheorem{theorem}{Theorem}[section]
\newtheorem*{intro:prop1}{Proposition \ref{prop:charofA1localequivariantweq}}
\newtheorem*{intro:prop2}{Proposition \ref{prop:eqkthydescent}}
\newtheorem{corollary}[theorem]{Corollary}
\newtheorem{proposition}[theorem]{Proposition}
\newtheorem{lemma}[theorem]{Lemma}
\theoremstyle{definition}
\newtheorem{defi}[theorem]{Definition}
\newtheorem{bsp}[theorem]{Example}
\newtheorem{rem}[theorem]{Remark}
\numberwithin{equation}{section}
\title{Equivariant Motivic Homotopy Theory}
\author{Philip Herrmann}
\address{Universität Hamburg, Bundesstraße 55, 20146 Hamburg}
\email{philip.herrmann@math.uni-hamburg.de}
\begin{document}

\begin{abstract}
 In this paper we study a model structure on a category of schemes with a group action and the resulting unstable and stable equivariant motivic homotopy theories. The new model structure introduced here samples a comparison to the one by Voevodsky and Hu-Kriz-Ormsby. We show that it allows to detect equivariant motivic weak equivalences on fixed points and how this property leads to a topologically convenient behavior of stable equivalences. We also prove a negative result concerning descent for equivariant algebraic K-theory.
\end{abstract}

\maketitle

\section{Introduction}
The study of transformation groups has a long history in many abstract and geometric areas of mathematics, including topology and algebraic geometry. However, recently equivariant matters experienced an increased focus in algebraic topology, not only due to their role in the work of Hill-Hopkins-Ravenel. This trend is also observed in motivic homotopy theory, where a foundational setup for equivariant considerations is provided by \cite{voevodsky2001lectures, Hu2011homotopy}. In this work, we present an alternative account to equivariant motivic homotopy theory, based on slight variation of the Nisnevich-style Grothendieck topology on the category of smooth $G$-schemes over a field. This new topology is build in a way that allows to detect equivariant local weak equivalences on fixed points. More precisely, under rather mild restrictions to the transformation group $G$ (cf.~Remark \ref{rem:forarty}), there exist right adjoint fixed point functors $(-)^H:\Gsmk\to\smk$, for all $H\leq G$, whose left Kan extensions give rise to a family of functors $(-)^H:\sPre(\Gsmk)\to\sPre(\smk)$, such that $f:X\to Y$ in $\sPre(\Gsmk)$ is a local weak equivalence if and only if $f^H:X^H\to Y^H$ is an ordinary (Nisnevich-)local weak equivalence for all $H\leq G$ (cf.~Corollary \ref{cor:charofeqlocaleq}). Further, we show that the usual $\AA^1$ contracting Bousfield localization interacts nicely with respect to the local model structures on both sides and the above implies a characterisation of equivariant $\AA^1$-local weak equivalences in the same terms.

\begin{intro:prop1}
A morphism $f\in \sPre(\Gsmk)$ is an $\AA^1$-local weak equivalence if and only if for all subgroups $H\leq G$ the morphism $f^H$ is an $\AA^1$-local weak equivalence in $\sPre(\smk)$.
\end{intro:prop1}

Following the topological work of Mandell \cite{mandell2004equivariant} and its motivic adaption in \cite{Hu2011homotopy}, we develop some stable equivariant motivic homotopy theory. In particular, we show how motivic analogs of (Lewis-May) fixed points $(-)^H$ and geometric fixed points $\Phi^H$, from genuine motivic $G$-spectra to ordinary $\PP^1$-spectra, both detect equivariant stable weak equivalences, cf.~Propositions \ref{prop:charstableweqfixed} \& \ref{prop:charstableweqgeometric}.

Finally, we investigate the descent property for equivariant algebraic K-theory and conclude that the topology under investigation does not allow equivariant algebraic K-theory to satisfy descent. We notice that by the same argument also the isovariant topology of \cite{serpe2010descent} does not allow descent for equivariant algebraic K-theory - in contrast to \cite[Theorem 4.2]{serpe2010descent}. These results have to be seen in correlation to \cite{krishna2012nisnevich} where it is shown that the equivariant Nisnevich topology from \cite{voevodsky2001lectures, Hu2011homotopy} allows K-theory descent.

\subsection*{Acknowledgements}
This paper is essentially the first part of my thesis \cite{herrmann2012stable}. I want to thank my advisor Oliver R\"ondigs for the great support in that time. Also I want to thank Jeremiah Heller, Paul Arne Østvær, Florian Strunk and Ben Williams for the many helpful discussions and suggestions that influenced this paper.

\section{Equivariant Grothendieck Topologies}

In \cite{voevodsky2001lectures} Voevodsky first defined a Nisnevich-style topology on the category of $G$-equivariant quasi-projective schemes. This approach was taken up by others and has since then been developed in several slightly different contexts, e.g.~\cite{Hu2011homotopy, krishna2012nisnevich,HOV}. Originally, Voevodsky defined the \emph{equivariant Nisnevich topology} as generated by étale equivariant maps which have an equivariant splitting sequence. After a discussion of concepts of stabilizers and fixed-points, we will rephrase the following definition in Lemma \ref{lemma:equivalenteqnis}.

\begin{defi}\label{def:equiNistop}
 An étale equivariant morphism $f:X\to Y\in\Gsmk$ is a covering in the equivariant Nisnevich topology if there is a sequence of $G$-invariant closed subvarieties
 $$
 \emptyset=Y_{n+1}\subsetneq Y_n \subsetneq \ldots \subsetneq Y_0=Y,
 $$
 such that $f$ has an equivariant section on $Y_j\setminus Y_{j+1}$.
\end{defi}

The main focus in this work will be on the following alternative topology.

\begin{defi}\label{def:HNistop}
 A morphism $f:X\to Y\in\Gsmk$ is a covering in the fixed-point Nisnevich topology, if $f^H:X^H\to Y^H$ is an ordinary Nisnevich covering in $\smk$, for all $H\leq G$.
\end{defi}

\begin{rem}
 \begin{enumerate}[a)]
  \item Instead of considering all subgroups, we could just insist on Nisnevich covers for a family $\mathcal{F}$ of subgroups. We would call the resulting topology the $\mathcal{F}$-fixed-point Nisnevich topology or just the $\mathcal{F}$-Nisnevich topology. For the family $\mathcal{F}=\mathcal{A}ll$ consisting of all $H\leq G$, we abbreviate the notation speak of the $H$-Nisnevich topology in the following. 
  \item After a recollection of definitions and properties of isotropy and fixed-point functors for schemes in the following subsection, we will have a closer look at these two topologies and have a detailed comparison result in subsection \ref{subsec:comparison}.
 \end{enumerate}

\end{rem}

\subsection{Isotropy and Fixed-Points}\label{subsec:isotropyandfixed}

Now we introduce two concepts of stabilizers. Their difference will be
responsible for many distinctions in the following and will finally explain the difference between the two topoloies introduced above - the equivariant Nisnevich topology and the fiexed point Nisnevich topology.

\begin{defi}
 Let $G$ be a group scheme acting on a scheme $X$ and $x:\kappa(x)\to X$ be a
point of $X$. The scheme theoretic stabilizer $G_x$ is defined by the pullback
diagram
\begin{equation}\label{eq:stabilizer}
\begin{xy}
\xymatrix{
G_x\ar[r]\ar[d]& G\times X\ar[d]^{(\alpha_X,\mathrm{pr}_X)}\\
\kappa(x)\ar[r]^{\Delta\circ x}&X\times X.
}
\end{xy}
\end{equation}
\end{defi}

In general, an action $\alpha_X:G\times X\to X$ of a group object $G$ on some
object $X$ is called free if the morphism
$$
(\alpha_X,\mathrm{pr}_X):G\times X\to X\times X
$$
is a monomorphism. By the characterization of locally finite type monomorphisms
\cite[Proposition 17.2.6]{EGA4-4} this implies that an action in $\smk$ is free
if and only if all isotropy groups are trivial in the sense that
$G_x\xrightarrow{\cong}\spec(\kappa(x))$ is an isomorphism for all $x\in X$.

There is a forgetful functor $U:\Gsmk\to|G|\Top$. As we only consider finite
constant group schemes $G$, we disregard the difference between $G$ and its
underlying space $|G|$ here. Applying $U$ to the diagram \eqref{eq:stabilizer}
we obtain a morphism $i:G_x\to S_x$ into the pullback in $G\Top$:
\begin{equation*}
\begin{xy}
\xymatrix{
G_x\ar@{-->}[dr]&&\\
&S_x\ar[r]\ar[d]& G\times UX\ar[d]\\
&\ast\ar[r]^{Ux}&UX\times UX.
}
\end{xy}
\end{equation*}
where $S_x$ is the set theoretic stabilizer of $x$.

\begin{lemma}
 Let $G$ be a finite constant group acting on a scheme $X$ and let $x\in X$.
Then there is an inclusion of subgroups $ G_x\leq S_x\leq G$.
\end{lemma}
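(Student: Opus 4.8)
The plan is to unwind the two pullback diagrams defining $G_x$ and $S_x$ and exhibit the two claimed inclusions separately. For the inclusion $S_x \leq G$, note that $S_x$ is by construction a subset of $G$ (it is the fiber of the orbit map over $(x,x)$ inside $G \times UX$, and the projection to $G$ is injective on it because the second coordinate is pinned to $x$); that it is a subgroup is the standard fact that set-theoretic stabilizers of a group action on a set are subgroups, which one checks directly from the action axioms, so no scheme theory is needed here.

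For the inclusion $G_x \leq S_x$, I would first explain why the comparison morphism $i:G_x \to S_x$ is a monomorphism. Since $U$ preserves pullbacks (it is a right adjoint, or one checks limits are computed on underlying spaces), $U G_x$ is the pullback in $G\Top$ of $U\kappa(x) \to UX \times UX \leftarrow G\times UX$ along $U(\Delta\circ x)$. The point $x:\kappa(x)\to X$ factors through a single point of $UX$, so there is a canonical map $UG_x \to S_x$ over $G\times UX$; because $S_x$ is itself a pullback, this map is determined, and since the structure map $UG_x \to G\times UX$ is a monomorphism (the bottom map $\kappa(x)\to X\times X$ becomes, on underlying spaces, an inclusion of a point, hence a mono, and monos are stable under pullback) the induced map $UG_x \to S_x$ is also a monomorphism. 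Concretely this says: a $T$-point $g$ of $G$ lies in $G_x$ only if it already lies in $S_x$, i.e. $G_x$ is identified with a sub-group-scheme of the constant group scheme $S_x$. One then has to observe that a subgroup scheme of a finite constant group, over a field $k$, is again (represented by) a constant group given by an honest subgroup — this uses that $G$ is finite constant and $k$ a field, so any closed subscheme of $\coprod_{S_x}\spec k$ is a sub-coproduct $\coprod_{H}\spec k$ for some $H\subseteq S_x$, and the group structure forces $H$ to be a subgroup. This gives $G_x \leq S_x$ as subgroups of $G$.

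I expect the only real subtlety to be the last point — verifying that the scheme-theoretic stabilizer, a priori only a closed sub-group-scheme of $G$, is actually \emph{constant}, i.e. corresponds to a genuine subgroup $H\leq G$ rather than some infinitesimal or non-reduced thickening. Over a field of characteristic zero, or more generally when one restricts the admissible groups as in the cited Remark \ref{rem:forarty}, closed subgroup schemes of a finite constant group are automatically reduced and hence constant, so this is where the hypothesis on $G$ enters. Once that is granted, the chain $G_x \leq S_x \leq G$ is immediate from the two preceding paragraphs. The remaining verifications (that $U$ preserves these finite limits, that monomorphisms are pullback-stable, that $S_x$ really is the naive set-theoretic stabilizer) are routine and I would only sketch them.
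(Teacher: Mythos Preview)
Your overall strategy works, but two points deserve correction, and the contrast with the paper's argument is instructive.

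First, the claim that $U$ preserves pullbacks (``it is a right adjoint, or one checks limits are computed on underlying spaces'') is false: the underlying space of a scheme-theoretic fiber product is in general not the fiber product of underlying spaces. Fortunately you do not need this. The comparison map $UG_x\to S_x$ exists simply by the universal property of the pullback $S_x$ in $G\Top$, and its injectivity follows because $G_x\to G\times X$ is a monomorphism of schemes (pullback of one) and monomorphisms of schemes are injective on points. Second, your closing worry about characteristic is unnecessary: a closed subscheme of $\coprod_{g\in G}\spec\kappa(x)$ is automatically a sub-coproduct $\coprod_{H}\spec\kappa(x)$ for some subset $H$, since the only closed subschemes of $\spec$ of a field are itself and $\emptyset$. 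No reducedness hypothesis or appeal to Remark~\ref{rem:forarty} is needed for this lemma.

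The paper, by contrast, bypasses all of this with a direct computation. Because $G$ is finite constant one has $G\times X=\coprod_{g\in G}X$, and the pullback \eqref{eq:stabilizer} decomposes componentwise: the $g$-component contributes $\spec\kappa(x)$ exactly when $g$ fixes $x$ set-theoretically and the induced automorphism of $\kappa(x)$ is the identity, and contributes $\emptyset$ otherwise. This yields the explicit description
\[
G_x=\{g\in S_x\mid g \text{ acts as }\id_{\kappa(x)}\},
\]
from which $G_x\leq S_x$ is immediate. This is not only shorter than the abstract route; the explicit formula is what gets used downstream (e.g.\ in Lemma~\ref{lemma:setschemestabilizer} and Example~\ref{ex:fixedpointsofgalois}), so it is worth having in hand.
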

\begin{proof}
We know that for an element $x$ in the underlying set $UX$ of the scheme $X$ we
have
\begin{align*}
S_x&=\{ g\in G \mid \text{the set map } g:UX\to UX \text{ satisfies } gx=x\}\\
\intertext{
and in the same way we can describe (the underlying set of) $G_x$ as}
G_x&=\{g\in S_x\mid \text{the induced morphism } g:\kappa(x)\to\kappa(x) \text{
equals } \id_{\kappa(x)}\}.
\end{align*}
\end{proof}

\begin{bsp}
Let $L:k$ be a Galois extension and consider the Galois action of $G:=\mathrm{Gal}(L:k)$ on $\spec(L)$. Then the scheme theoretic stabilizer $G_\ast$ of the unique point $\ast$ in $\spec(L)$ is trivial while the set theoretic isotropy $S_\ast$ is all of $G$ in this case. For the induced action of $G$ on $\spec(\mathcal{O}_L)$, the scheme theoretic stabilizer $G_\mathfrak{p}$ of a point $\mathfrak{p}\in\spec(\mathcal{O}_L)$ recovers the inertia group of $\mathfrak{p}$ while $S_\mathfrak{p}$ gives the decomposition group of $\mathfrak{p}$. 
\end{bsp}

Let $k$ be a field of characteristic $0$ and let $\Gsmk$ be the category of
$G$-equivariant separated smooth $k$-schemes with $G$-equivariant morphisms.
Much power in classical equivariant topology is obtained from adjunctions
connecting equivariant to non-equivariant questions, e.g.~the two adjunctions
with the functor from spaces to $G$-spaces which adds a trivial $G$-action. Due
to the usual problems with quotients in algebraic geometry it seems to be
difficult to carry both of the mentioned adjunctions to a motivic setup. Therefore, we decide to build up our theory with a focus on an adjunction analogous to the classical adjunction
$$
(-)_{\mathrm{trivial}}:\Top\rightleftarrows\GTop:(-)^G.
$$

For any $k$-scheme $X$ there is the trivial $G$-scheme 
\begin{equation}\label{eq:trivialGaction}
X\tr=(X,G\times X\xrightarrow{\pi_X} X) 
\end{equation}
over $k$. Mapping $X$ to $X\tr$ gives embeddings $\schk \subset G\schk$ and
$\smk \subset\Gsmk$. For $X\in G\schk$, we define the functor
$$
h_{X^G}:\schk^\op\to\set, Y\mapsto \Hom_{G\schk}(Y\tr,X).
$$
It is natural to ask for the representability of $h_{X^G}$ and one is inclined
to denote a representing object by $X^G$. The following theorem answers this
question and supports the notation.
\begin{theorem}\label{thm:repfixedpoints}
Let $G$ be a finite constant group scheme over $k$ and let $X\in G\schk$. Then
there exists a $G$-invariant closed subscheme $X^G$ of $X$ with a trivial
$G$-action, representing $h_{X^G}$.
\end{theorem}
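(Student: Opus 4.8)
The plan is to construct $X^G$ by descending from an affine situation and then gluing. First I would reduce to the affine case: since $X$ is separated, it has a $G$-invariant affine open cover — for a finite constant group one obtains this by intersecting the $G$-translates of an affine open through each point, which is again affine because $X$ is separated — and representability of $h_{X^G}$ can be checked locally, so it suffices to treat $X=\spec(A)$ with $G$ acting by ring automorphisms. In this case the natural candidate is $X^G=\spec(A_G)$, where $A_G$ is the quotient of $A$ by the ideal generated by $\{a-\sigma(a)\mid a\in A,\ \sigma\in G\}$; one checks immediately that a $k$-algebra map $A\to B$ is $G$-equivariant for the trivial $G$-action on $B$ precisely when it factors through $A_G$, which is exactly the universal property defining $h_{X^G}$. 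This closed subscheme carries the trivial $G$-action by construction and is $G$-invariant.

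Next I would verify that $\spec(A_G)$ is smooth over $k$. This is where characteristic $0$ enters: because $|G|$ is invertible, the Reynolds operator $e=\tfrac{1}{|G|}\sum_{\sigma\in G}\sigma$ is available, and over a field of characteristic $0$ a finite group quotient of a smooth affine scheme behaves well. More precisely, one argues that $A_G$ is, étale-locally, a smooth $k$-algebra; a clean way is to pass to the completion at a fixed point and use that a finite-order automorphism of a regular local ring in characteristic $0$ can be linearized (the action on the cotangent space lifts), so the fixed locus is cut out by a regular system of parameters and is itself regular. Since $k$ is perfect (characteristic $0$), regular equals smooth, giving $X^G\in\smk$.

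Then comes the gluing step, which I expect to be the main obstacle — not conceptually deep but the point where one must be careful. Given a $G$-invariant affine open cover $\{U_i\}$ of $X$, the fixed subschemes $U_i^G$ are closed in $U_i$, and on overlaps $U_i\cap U_j$ (which are $G$-invariant but possibly not affine) one needs $U_i^G\cap(U_i\cap U_j)=U_j^G\cap(U_i\cap U_j)$ as subschemes; this follows because the formation of the fixed subscheme, being defined by the universal property $h_{X^G}$, commutes with restriction to $G$-invariant opens — equivalently, on a further affine refinement the two ideals agree by the affine computation above. Hence the $U_i^G$ glue to a closed subscheme $X^G\subseteq X$ with trivial $G$-action, smooth over $k$. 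Finally I would check that this glued object represents $h_{X^G}$ globally: a $G$-map $Y\tr\to X$ from a (trivial) $k$-scheme $Y$ is covered by maps into the $U_i$, each of which factors uniquely through $U_i^G$ by the affine case, and uniqueness lets these factorizations agree on overlaps, producing the required unique factorization through $X^G$. The separatedness hypothesis is used precisely to keep the cover affine after intersecting $G$-translates; the characteristic $0$ hypothesis is used only for smoothness of the fixed locus.
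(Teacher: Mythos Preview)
Your affine construction $A_G=A/(a-\sigma(a):a\in A,\ \sigma\in G)$ is correct, but there is a genuine gap in the reduction, and you are also proving more than is asked. The theorem is stated for $X\in G\schk$, not $\Gsmk$: neither separatedness nor smoothness is assumed, so your entire smoothness discussion is actually addressing the \emph{next} lemma in the paper (there handled via Luna's slice theorem), not this one. More seriously, even granting separatedness, the step producing a $G$-invariant affine open neighbourhood of each point fails as written: for $\bigcap_{\sigma\in G}\sigma(U)$ to contain $x$ you need $\sigma^{-1}(x)\in U$ for every $\sigma$, i.e.\ $U$ must already contain the whole orbit $G\cdot x$, not just $x$ itself. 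Separatedness alone does not guarantee that an arbitrary finite set of points lies in a common affine open; that is a quasi-projectivity-type condition and can fail for general separated schemes. So your local-to-global reduction is not justified at the stated level of generality.

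The paper avoids covers entirely with a global construction: take $J\subset\mathcal{O}_X$ to be the intersection of the ideal sheaves of all $G$-invariant closed subschemes on which $G$ acts trivially, and set $X^G=V(J)$; that $X^G$ itself has trivial $G$-action (and hence represents $h_{X^G}$, being visibly maximal among such closed subschemes) is then obtained by citing Fogarty. Your affine formula is exactly what this global $J$ computes on any $G$-invariant affine open, so the two approaches agree wherever yours applies, but the global one needs no hypothesis on $X$ beyond being a $G$-scheme.
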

\begin{proof}
 Let $\{U_i\}_{i\in I}$ be the family of all closed $G$-invariant subschemes of
$X$ on which $G$ acts trivially and let $J_i$ be the quasi-coherent ideal of
$\mathcal O_X$ corresponding to $U_i$. Let $J:=\cap_i J_i$ be the intersection
of $\mathcal O_X$ modules and denote by $X^G$ the closed subscheme of $X$
corresponding to the ideal sheaf $J$. Then $X^G$ is $G$-invariant and has a
trivial $G$-action as it is shown in \cite[Theorem 2.3]{fogarty1973fixed}.
\end{proof}

\begin{rem}\label{rem:forarty}
Theorem \ref{thm:repfixedpoints} has a notable history. It is stated in more
general terms as \cite[Exp. VIII, Théorème 6.4]{SGA3}. Fogarty still tried to
loose the assumptions on $G$ in \cite[Theorem 2.3]{fogarty1973fixed}, but his
published proof contains a gap which can not be closed, as shown in
\cite{wright1976flat}. However, in this special case of a finite constant group
scheme Fogarty's proof also holds.
\end{rem}

\begin{lemma}
 Let $G$ be a finite constant group scheme over $k$ and let $X\in \Gsmk$. Then $X^G$ is a smooth $k$-scheme and thus we have an adjunction
 \begin{equation}\label{eq:schemeadj}
  tr:\smk\rightleftarrows\Gsmk:(-)^G.
 \end{equation}
\end{lemma}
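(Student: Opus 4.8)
The adjunction $tr : \smk \rightleftarrows G\smk : (-)^G$ is essentially immediate once we know $X^G \in \smk$: the bijection
\[
\Hom_{\smk}(Y, X^G) \;\cong\; \Hom_{G\smk}(Y\tr, X)
\]
is exactly the representability statement of Theorem \ref{thm:repfixedpoints}, restricted from $G\schk$ to $\Gsmk$, and naturality in both variables is formal. So the entire content of the lemma is the smoothness of $X^G$ over $k$. I would state this reduction in one sentence and then spend the rest of the proof on smoothness.

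For smoothness, the plan is to use the hypothesis $\operatorname{char} k = 0$, which forces $G$ (a finite constant group scheme) to be linearly reductive, and then invoke the standard fact that the fixed-point scheme of a reductive group acting on a smooth scheme over a field of characteristic zero is smooth. Concretely, I would first reduce to a local/formal computation: smoothness can be checked on completed local rings, and at a fixed point $x \in X^G$ the finite group $G$ acts on the regular local ring $\mathcal{O}_{X,x}$ and on its completion $\widehat{\mathcal{O}}_{X,x}$. Since $|G|$ is invertible in $k$, one can linearize the action: the maximal ideal $\mathfrak{m}$ has a $G$-stable complement to $\mathfrak{m}^2$, so by averaging one produces $G$-equivariant coordinates, i.e.\ a $G$-equivariant isomorphism $\widehat{\mathcal{O}}_{X,x} \cong k(x)[[t_1,\dots,t_n]]$ where $G$ acts linearly on the span of the $t_i$. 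The fixed subscheme is then cut out by the linear forms $g \cdot t_i - t_i$, and the invariant subring is a power series ring on the $G$-invariant part of the cotangent space — hence regular, of dimension $\dim (\mathfrak{m}/\mathfrak{m}^2)^G$. This shows $X^G$ is regular at each of its points; combined with the fact that $X^G$ is of finite type over the perfect field $k$, regularity is equivalent to smoothness.

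Alternatively — and this is probably the cleanest route to cite rather than reprove — I would simply appeal to the literature: the smoothness of fixed-point schemes for reductive (in particular finite, in characteristic zero) group actions on smooth schemes is recorded in \cite[Exp. VIII]{SGA3}, in the same circle of ideas as Theorem \ref{thm:repfixedpoints}, and also follows from Fogarty \cite{fogarty1973fixed} in the case at hand (cf.\ Remark \ref{rem:forarty}). I would give the linearization argument in a few lines as the honest proof and mention the references for readers who prefer a black box.

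The main obstacle, and the only place where something real happens, is the smoothness claim, and within that, the point is the characteristic-zero hypothesis: the averaging/linearization step genuinely needs $|G|$ invertible in $k$, and without it the fixed-point scheme of a finite group acting on a smooth scheme need not be smooth (nor even reduced). Everything else — separatedness of $X^G$ (inherited as a closed subscheme of the separated scheme $X$), $G$-invariance and triviality of the action (already in Theorem \ref{thm:repfixedpoints}), and the adjunction bijection itself — is either already proved above or a formality. I would also note in passing that the forgetful functor $tr$ lands in separated schemes and that $(-)^G$ preserves separatedness, so the adjunction is genuinely between the stated categories.
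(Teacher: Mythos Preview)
Your proposal is correct, but the paper takes a different route: its entire proof is a one-line citation of Luna's \'etale slice theorem \cite{luna1973slices} (together with a pointer to \cite{Hu2011homotopy}). Luna's theorem gives, \'etale-locally around a fixed point, a $G$-equivariant isomorphism of $X$ with a linear $G$-representation; the fixed locus of a linear representation is a linear subspace, hence smooth, and smoothness is \'etale-local.

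Your formal-local linearization argument is essentially the infinitesimal shadow of the same idea: instead of producing an \'etale slice you pass to the completed local ring and linearize there by averaging. This is more elementary and entirely self-contained (no need to invoke the full strength of Luna's theorem, which in its original form also requires an algebraically closed base), and it makes transparent exactly where the hypothesis $|G|\in k^\times$ enters. The paper's approach, by contrast, buys brevity and a cleaner geometric picture, and the \'etale-local statement is strictly stronger than the formal-local one should one need it later.

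One small terminological slip in your write-up: after cutting out the fixed subscheme by the linear forms $g\cdot t_i - t_i$, the ring you obtain is the \emph{coinvariant quotient} of the power series ring, not the invariant subring; it is this quotient that is isomorphic to $k(x)[[(\mathfrak{m}/\mathfrak{m}^2)^G]]$ and hence regular. The invariant subring $A^G$ is a different (and generally non-regular) object and is not what computes $X^G$. Your dimension formula and conclusion are unaffected.
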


\begin{proof}
This follows essentially from Luna's slice theorem \cite[Théorème du slice étale, p.97]{luna1973slices} (cf.~\cite[(1)]{Hu2011homotopy}).
\end{proof}

We obtain similar adjunctions for all subgroups $H\leq G$ as a composition
$$
\smk\rightleftarrows H\smk \rightleftarrows \Gsmk:(-)^H,
$$
where the left adjunction is as in \eqref{eq:schemeadj} and the right adjunction is given by restricting the $G$ action to an $H$ action. This gives the family $\{(-)^H\}_{H\leq G}$ of fixed-point functors we have used in Definition \ref{def:HNistop} to define the $H$-Nisnevich topology.

\subsection{Comparison}\label{subsec:comparison} Now that we have recalled the essential concepts for a
distinction of the equivariant Nisnevich topology and the alternative
$H$-Nisnevich topology, we will rephrase these topologies in terms focusing
on pointwise isotropy groups. This will allow to describe a relation of the two
topologies in Corollary \ref{cor:eqNisisHNis}.

The following lemma is \cite[Proposition 3.5]{HOV} and gives a good collection of the equivalent definitions of the equivariant Nisnevich topology from the literature.
\begin{lemma}\label{lemma:equivalenteqnis}
 Let $f:X\to Y$ be an étale morphism in $\Gsmk$. The following are equivalent:
 \begin{enumerate}
  \item $f$ is a covering in the equivariant Nisnevich topology.
  \item $f$ is a covering in the topology generated by the cd-structure with squares
\begin{equation*}
\begin{xy}
\xymatrix{
A\ar[r]\ar[d]& C\ar[d]^{p}\\
B\ar[r]^{i}&D,
}
\end{xy}
\end{equation*}
  where $i$ is an open inclusion and $p$ is étale and restricts to an isomorphism $p^{-1}(D\setminus B)_{red}\cong (D\setminus B)_{red}$.
  \item For every $y\in Y$ there is an element $x\in X$ such that $f$ induces an isomorphism between the residue class field $\kappa(x)$ and $\kappa(y)$ and between the set-theoretic isotropy groups.
 \end{enumerate}
\end{lemma}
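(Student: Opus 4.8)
The plan is to route all three equivalences through the pointwise condition (3): I would prove $(1)\Rightarrow(3)$ and $(3)\Rightarrow(1)$ by hand and deduce $(1)\Leftrightarrow(2)$ from general facts about the Nisnevich-type cd-structure. The statement itself is \cite[Proposition 3.5]{HOV} and the non-equivariant prototype of the argument is in \cite{voevodsky2001lectures}, so the point of the sketch is really to see where equivariance enters and what it costs.

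\emph{$(1)\Rightarrow(3)$.} This should be immediate. Given a splitting sequence $\emptyset=Y_{n+1}\subsetneq\dots\subsetneq Y_0=Y$ for $f$ and a point $y\in Y$, let $j$ be the unique index with $y\in Y_j\setminus Y_{j+1}$ and set $x:=s(y)$, where $s\colon Y_j\setminus Y_{j+1}\to X$ is the equivariant section of $f$ over that $G$-invariant locally closed stratum. The homomorphism $\kappa(y)\to\kappa(x)$ induced by $f$ is split by the one induced by $s$, and it is a finite separable extension because $f$ is \'etale, hence it is an isomorphism. The inclusion $S_x\subseteq S_y$ holds for any equivariant $f$ with $f(x)=y$ (apply $f$), and for $g\in S_y$ one gets $gx=g\cdot s(y)=s(gy)=s(y)=x$ from equivariance of $s$, so $S_x=S_y$.

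\emph{$(3)\Rightarrow(1)$.} Here I would use Noetherian induction on $Y$: it suffices to produce a nonempty $G$-invariant open $U\subseteq Y$ carrying a $G$-equivariant section of $f$, since then $Y_1:=Y\setminus U$ with its reduced structure is a proper $G$-invariant closed subscheme, $f^{-1}(Y_1)\to Y_1$ still satisfies (3) as this is a pointwise condition, and one iterates. To build $U$, pick representatives $Z$ for the $G$-orbits of irreducible components of $Y$, with generic points $\eta_Z$; condition (3) gives $x_Z\in X$ over $\eta_Z$ with $\kappa(\eta_Z)\xrightarrow{\sim}\kappa(x_Z)$, and since an \'etale morphism that is an isomorphism on a residue field restricts to an open immersion near that point, after shrinking we find an open $W_Z\ni\eta_Z$ meeting no other component together with a section $s_Z\colon W_Z\to X$ of $f$ sending $\eta_Z$ to $x_Z$. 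Translating by $g\in G$ gives sections over $gW_Z$; whenever $g\eta_Z=g'\eta_Z$ the element $g^{-1}g'$ lies in $S_{\eta_Z}=S_{x_Z}$ by the isotropy clause of (3), so the two translated sections agree at $g\eta_Z$ and hence on the whole irreducible overlap $gW_Z\cap g'W_Z$ (two sections of a separated \'etale morphism agreeing at a generic point agree). The $G$-translates therefore glue to a $G$-equivariant section over the $G$-invariant open $U:=\bigcup_Z\bigcup_{g\in G}gW_Z$, which is dense since it contains every generic point of $Y$. This gluing is the main obstacle: producing a section over a dense open is the classical argument, but organising the local pieces into a single $G$-equivariant section is exactly what the isotropy-group clause in (3) is there to make possible, and a reducible $Y$ forces the orbit-of-components bookkeeping used above.

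\emph{$(1)\Leftrightarrow(2)$.} In one direction, the cover $\{i\colon B\hookrightarrow D,\ p\colon C\to D\}$ of a distinguished square admits the splitting sequence $\emptyset\subsetneq(D\setminus B)_{red}\subsetneq D$ — use $\id_B$ over $B$ and the inverse of $p^{-1}(D\setminus B)_{red}\xrightarrow{\sim}(D\setminus B)_{red}$ over the closed stratum — so every cd-cover is an equivariant Nisnevich cover, and stability of splitting sequences under composition and base change upgrades this to an inclusion of topologies. Conversely one shows by induction on the length $n$ that a splitting-sequence cover is refined by an iteration of distinguished-square covers, the inductive step peeling off the square cut out by the open stratum $Y_0\setminus Y_1$ over which $f$ has a section; for this bookkeeping I would simply refer to \cite{voevodsky2001lectures, HOV} rather than reproduce it.
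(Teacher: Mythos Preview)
Your sketch is sound; in particular the gluing step in $(3)\Rightarrow(1)$ is the heart of the matter and you have handled it correctly, using $S_{\eta_Z}=S_{x_Z}$ to force the $G$-translates of the local section to agree at generic points and then invoking separatedness of $f$ to propagate agreement over the (irreducible) overlaps. The $(1)\Rightarrow(3)$ direction and the cd-structure equivalence are routine, as you note.

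The paper itself, however, does not prove this lemma: it simply records it as \cite[Proposition 3.5]{HOV} and moves on. So there is nothing to compare at the level of argument --- your proposal supplies what the paper outsources. Since you already cite \cite{HOV} for the statement, your write-up is strictly more informative than the paper's treatment; the only thing to add is that the $(1)\Leftrightarrow(2)$ bookkeeping you defer to \cite{voevodsky2001lectures, HOV} really does require checking that the equivariant Nisnevich cd-structure is complete and regular in Voevodsky's sense, which is where the work in those references lies.
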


\begin{lemma}\label{lemma:setschemestabilizer} Let $G$ be a finite constant
group and let $f:X\to Y$ be an étale morphism in $\Gsmk$ such that for all $y\in
Y$ there is an element $x\in X$ with
$f^*:\kappa(y)\xrightarrow{\cong}\kappa(x)$. If there is such an $x$ with the
additional property that $S_x=S_{f(x)}$ then $f$ induces an isomorphism of the
respective scheme theoretic stabilizers.
\end{lemma}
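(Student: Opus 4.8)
The plan is to make both scheme-theoretic stabilizers completely explicit, using that $G$ is finite constant, and then to read off the statement.

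First I would unravel the pullback \eqref{eq:stabilizer} defining $G_x$. Since $G$ is finite constant we have $G\times X=\coprod_{g\in G}X$, and $(\alpha_X,\mathrm{pr}_X)$ carries the $g$-th copy of $X$ into $X\times X$ by $x'\mapsto(gx',x')$. As finite coproducts are stable under base change in schemes, this yields a decomposition $G_x=\coprod_{g\in G}(G_x)_g$, with $(G_x)_g$ the fibre product of $\spec\kappa(x)\xrightarrow{\Delta\circ x}X\times X\xleftarrow{(gx',x')}X$. Chasing $T$-points one sees that $(G_x)_g$ is empty unless $g\in S_x$, and that for $g\in S_x$ it is the subscheme of $\spec\kappa(x)$ on which the two morphisms $\id$ and $\spec(\sigma_g^X)$ agree, where $\sigma_g^X\in\mathrm{Aut}(\kappa(x))$ is the automorphism of the residue field induced by $g$; here one cancels the monomorphism $x\colon\spec\kappa(x)\to X$. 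Since $\spec\kappa(x)$ is the spectrum of a field, its only subschemes are $\emptyset$ and itself, so $(G_x)_g\cong\spec\kappa(x)$ if $\sigma_g^X=\id$ and $(G_x)_g=\emptyset$ otherwise. Hence
\[
G_x\;=\;\coprod_{g\in H_X}\spec\kappa(x),\qquad H_X:=\{\,g\in S_x\mid\sigma_g^X=\id_{\kappa(x)}\,\},
\]
which also recovers the set-theoretic description of the subgroup $G_x$ from \S\ref{subsec:isotropyandfixed}; the identical computation gives $G_{f(x)}=\coprod_{g\in H_Y}\spec\kappa(f(x))$ with $H_Y:=\{\,g\in S_{f(x)}\mid\sigma_g^Y=\id_{\kappa(f(x))}\,\}$.

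By functoriality of the pullback \eqref{eq:stabilizer}, the equivariant morphism $f$ induces $\varphi\colon G_x\to G_{f(x)}$; since $G\times X\to G\times Y$ is $\id_G\times f$, the map $\varphi$ respects the indexing by $g$, and on the $g$-th component (for $g\in H_X$) it is the morphism $\spec\kappa(x)\to\spec\kappa(f(x))$ induced by $f^*$, hence an isomorphism by hypothesis. So it remains to prove $H_X=H_Y$. Here I would combine the hypothesis $S_x=S_{f(x)}=:S$ with $G$-equivariance: writing $\iota:=f^*\colon\kappa(f(x))\xrightarrow{\;\cong\;}\kappa(x)$, the factorization $f\circ x=f(x)\circ\spec\iota$ and the fact that $f$ commutes with the action of $g$ give, after cancelling the monomorphism $f(x)\colon\spec\kappa(f(x))\to Y$, the relation $\sigma_g^X\circ\iota=\iota\circ\sigma_g^Y$ for all $g\in S$, i.e.\ $\sigma_g^X=\iota\,\sigma_g^Y\,\iota^{-1}$. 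Thus $\sigma_g^X=\id$ if and only if $\sigma_g^Y=\id$, so $H_X=H_Y$ and $\varphi$ is a coproduct of isomorphisms indexed by one and the same finite group, hence an isomorphism (of group schemes).

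The routine ingredients are the two ``cancel a monomorphism'' steps and the $T$-point chase describing $(G_x)_g$. The only place where genuine care is needed is the explicit identification $G_x=\coprod_{g\in H_X}\spec\kappa(x)$ (and its analogue for $f(x)$); granting that, the conjugacy relation $\sigma_g^X=\iota\,\sigma_g^Y\,\iota^{-1}$ does the rest. Note that neither the étale hypothesis nor the condition ``for all $y\in Y$ there is $x$ with $f^*\colon\kappa(y)\xrightarrow{\cong}\kappa(x)$'' is used beyond supplying the residue-field isomorphism at the chosen point $x$.
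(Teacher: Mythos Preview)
Your argument is correct and takes a more direct route than the paper's. Both proofs ultimately show that the underlying subgroups $\{g\in S_x:\sigma_g^X=\id_{\kappa(x)}\}$ and $\{g\in S_{f(x)}:\sigma_g^Y=\id_{\kappa(f(x))}\}$ coincide, but the mechanisms differ. The paper passes through the local rings: it uses the \'etale hypothesis together with the residue field isomorphism and Nakayama's lemma to argue that $f_x\colon\mathcal{O}_{Y,y}\to\mathcal{O}_{X,x}$ is surjective, and then cancels this epimorphism in the relation $g_x\circ f_x=f_x\circ g_y$ to force $g_x=\id$ on $\mathcal{O}_{X,x}$. You instead remain entirely at the level of residue fields: equivariance of $f$ yields the conjugacy relation $\sigma_g^X=\iota\,\sigma_g^Y\,\iota^{-1}$ with $\iota=f^*$, and since $\iota$ is an isomorphism by hypothesis the equality $H_X=H_Y$ drops out immediately. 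Your approach is more elementary---no Nakayama, no local rings---and, as you correctly observe, makes no use of the \'etale hypothesis; only the residue field isomorphism at the chosen point and $S_x=S_{f(x)}$ are needed. The paper's route, by contrast, establishes the stronger (and for the stated conclusion unnecessary) fact that $g$ acts trivially on the full local ring $\mathcal{O}_{X,x}$.
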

\begin{proof}
Since $f$ is equivariant, we have an inclusion of the underlying subgroups
$G_x\leq G_{f(x)}$ for all $x\in X$. Let $y\in Y$ and $x\in X$ be as above and
let $g$ be an element in the underlying set of $G_y$. From the assumptions we
know that $g$ is then also an element in the set theoretic stabilizer $S_x$.
Consider the commutative square
\begin{equation*}
\begin{xy}
\xymatrix{
\mathcal{O}_{Y,y}\ar[r]^{g_y=\id}\ar[d]^{f_x}&\mathcal{O}_{Y,y}\ar[d]^{f_x}\\
\mathcal{O}_{X,x}\ar[r]^{g_x}&\mathcal{O}_{X,x}.
}
\end{xy}
\end{equation*}
We need to see that the action $g_x$ induced by $g$ on the local ring of $X$ at
$x$ is trivial, i.e.~$g_x=\id$, to conclude that the underlying subgroups $G_x$
and $G_y$ coincide. Since $f_x$ induces an isomorphism on residue fields, it
follows from Nakayama's Lemma that $f_x$ is itself surjective. So, $f_x$ is an epimorphism
and we cancel it in $f_x=g_x\circ f_x$ to obtain $g_x=\id$ and hence $G_x=G_y$
for the underlying subgroups of $G$. Finally, we may again apply that $f$
induces an isomorphism between $\kappa(y)$ and $\kappa(x)$ to obtain that $f$
also induces an isomorphism
$$
G_x=|G_x|\times \kappa(x)\xrightarrow{\cong}|G_y|\times \kappa(y)=G_y
$$
of the scheme theoretic isotropy groups.
\end{proof}

\begin{lemma}
 A morphism $f:X\to Y$ in $\Gsmk$ is a cover in the $H$-Nisnevich topology (an $H$-cover) if and only if $f$ is étale (as a morphism of schemes), for every point $y$ in $Y$ there is a point $x$ in $X$, such that $f$ induces an isomorphism of residue fields, and
\begin{equation*}
\text{also induces an isomorphism } G_x\xrightarrow{\cong} G_y \text{ of scheme theoretic stabilizers.}\tag{*}
\end{equation*}
\end{lemma}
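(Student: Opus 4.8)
The plan rests on two ingredients, after which both implications become essentially formal. The first is a dictionary between fixed-point schemes and pointwise isotropy: for $Z\in\Gsmk$, a point $z$ of $Z$, and a subgroup $H\le G$, one has $z\in Z^H$ if and only if the point $\spec\kappa(z)\to Z$ is $H$-equivariant for the trivial action on the source, i.e. if and only if $H$ fixes $z$ and acts trivially on $\kappa(z)$, equivalently if and only if $H$ lies in the underlying group of the scheme-theoretic stabilizer $G_z$; and for such $z$ the closed immersion $Z^H\hookrightarrow Z$ identifies $\kappa_{Z^H}(z)$ with $\kappa_Z(z)$. This is immediate from the universal property of $Z^H$ in Theorem \ref{thm:repfixedpoints} together with the set-theoretic description of $G_z$ recorded above. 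I will also use, exactly as in the proof of Lemma \ref{lemma:setschemestabilizer}, that equivariance of $f$ forces $|G_x|\le|G_{f(x)}|$ for every point $x$ of $X$.

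The second ingredient is that $(-)^H$ preserves étale morphisms (recall $X^H,Y^H\in\smk$). Since $X^H$ and $Y^H$ are of finite type over $k$, it suffices to check that $f^H$ is formally étale. For an affine test scheme $\spec A$ and a square-zero extension $A\twoheadrightarrow A_0$, the representability in Theorem \ref{thm:repfixedpoints} identifies $\Hom(\spec A,X^H)$ with the $H$-fixed set of $\Hom(\spec A,X)$ under the post-composition action, and similarly for $Y^H$ and for $A_0$; since taking $H$-fixed sets commutes with the fibre product appearing in the formal-étaleness diagram and carries the bijection witnessing formal étaleness of $f$ to a bijection, $f^H$ is formally étale. (In characteristic zero one could instead argue via the identification $T_z(Z^H)=(T_z Z)^H$ of tangent spaces.)

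For the implication "$\Leftarrow$", assume $f$ is étale and that every point $y$ of $Y$ has a point $x$ over it with $\kappa(x)\xrightarrow{\cong}\kappa(y)$ and $G_x\xrightarrow{\cong}G_y$. Fix $H\le G$; then $f^H$ is étale by the second ingredient, so it remains to see that $f^H$ is completely decomposed. Given $y'\in Y^H$, viewed as a point of $Y$, the dictionary gives $H\le|G_{y'}|$. Choosing $x$ over $y'$ as in the hypothesis, equivariance gives $|G_x|\le|G_{y'}|$ while the isomorphism $G_x\cong G_{y'}$ forces the reverse inequality of orders, so $|G_x|=|G_{y'}|\ge H$ and hence $x\in X^H$; since residue fields do not change along the fixed-point immersions, this $x$ exhibits $f^H$ as completely decomposed over $y'$. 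As $H$ was arbitrary, $f$ is an $H$-cover.

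For "$\Rightarrow$", suppose $f$ is an $H$-cover. Taking $H=\{e\}$ shows $f=f^{\{e\}}$ is an ordinary Nisnevich cover, in particular étale. Now fix a point $y$ of $Y$ and set $H:=|G_y|$, so $y\in Y^H$ by the dictionary. Since $f^H$ is a Nisnevich cover there is $\tilde x\in X^H$ with $f^H(\tilde x)=y$ and $\kappa(\tilde x)\xrightarrow{\cong}\kappa(y)$; let $x$ be its image in $X$, so $f(x)=y$ and $\kappa(x)\xrightarrow{\cong}\kappa(y)$. Finally $x\in X^H$ gives $H\le|G_x|$, while equivariance gives $|G_x|\le|G_{f(x)}|=|G_y|=H$; thus $|G_x|=|G_y|$, and combined with the residue-field isomorphism this upgrades to an isomorphism $G_x=|G_x|\times\kappa(x)\xrightarrow{\cong}|G_y|\times\kappa(y)=G_y$ of scheme-theoretic stabilizers. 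The only place where genuine algebro-geometric input enters is the preservation of étale morphisms under $(-)^H$ in the second step; I expect that (or a citation replacing it) to be the crux, everything else being bookkeeping with the stabilizer/fixed-point dictionary.
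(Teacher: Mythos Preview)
Your proof is correct, and for the ``$\Rightarrow$'' direction it is essentially identical to the paper's: set $H=|G_y|$, lift $y$ along the Nisnevich cover $f^H$, and sandwich $|G_x|$ between $H$ and $|G_y|$.

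The genuine difference lies in the ``$\Leftarrow$'' direction, specifically in how you establish that $f^H$ is \'etale. The paper does not prove that $(-)^H$ preserves \'etale morphisms in general; instead it factors $f^H$ through the fibre product $X\times_Y Y^H$ via a closed immersion $i:X^H\hookrightarrow X\times_Y Y^H$, uses the isotropy condition $(\ast)$ to see that $f^H$ is surjective, and then argues ``by dimension'' that $X^H$ must be a union of irreducible components of the (smooth, \'etale over $Y^H$) fibre product, whence $i$ is an open-and-closed immersion and $f^H$ is \'etale. Your functor-of-points argument is cleaner and proves strictly more: since taking $H$-fixed points of a set is a limit, it carries the bijection expressing formal \'etaleness of $f$ to the corresponding bijection for $f^H$, so $(-)^H$ preserves \'etale morphisms unconditionally. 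This decouples the \'etaleness of $f^H$ from condition $(\ast)$ and avoids the dimension-counting step; the price is that it is less geometric and requires invoking the representability of $(-)^H$ and the equivalence of \'etale with formally \'etale plus locally of finite presentation.
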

\begin{proof}
First, assume that $f:X\to Y\in\Gsmk$ is a morphism such that $f^e$ is Nisnevich in $\smk$ and $f$ induces an isomorphism on scheme theoretic isotropy.
 
In the commutative diagram 
 \begin{equation*}
\begin{xy}
\xymatrix{
X^H\ar@{-->}[dr]^(0.8){i}\ar[ddr]_{f^H}\ar[rrd]^{\iota^H_X}&&\\
&X\times_YY^H\ar[r]_j\ar[d]^{f'}&X\ar[d]^f\\
&Y^H\ar[r]^{\iota^H_Y}&Y
}
\end{xy}
\end{equation*}
the morphisms $\iota^H_X$ and $\iota^H_Y$ are closed immersions, hence so are $j$ and $i$. From the isotropy condition (*) it follows that $f^H$ is surjective, so that by dimension $X^H$ is a union of irreducible components of $X\times_YY^H$ and thus $i$ and also $f^H$ are étale. If for any $y\in Y^H$ an element $x\in X$ is given with the property that $f$ induces isomorphisms of the respective residue fields and scheme theoretic stabilizers, then $x$ is in $X^H$ and therefore $f^H$ is Nisnevich.

Conversely, let $f^H$ be a Nisnevich cover in $\smk$ for all subgroups $H\leq G$. Given an element $y\in Y$ say with $G_y=H\times \kappa(y)$, then $y$ is in $Y^H$ and there is an element $x$ in $X^H$, such that $f$ induces an isomorphism from $\kappa(y)$ to $\kappa(x)$. Since $x$ is in $X^H$ we know
$$
G_x=K\times\kappa(x)\geq H\times \kappa(x)\cong H\times \kappa(y) = G_y
$$
and the equivariance of $f$ implies $G_x\leq G_y$, so that $f$ induces an isomorphism on scheme theoretic isotropy.
\end{proof}

\begin{corollary}\label{cor:eqNisisHNis}
 Every equivariant Nisnevich cover is an $H$-cover.
\end{corollary}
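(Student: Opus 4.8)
The plan is to chain together the three preceding lemmas, so that the corollary becomes a purely formal consequence. Let $f\colon X\to Y$ be a covering in the equivariant Nisnevich topology. By Lemma~\ref{lemma:equivalenteqnis}, in particular the equivalence with condition~(3), $f$ is \'etale and for every $y\in Y$ there is a point $x\in X$ such that $f$ induces an isomorphism $\kappa(x)\xrightarrow{\cong}\kappa(y)$ on residue fields and an isomorphism between the set-theoretic isotropy groups $S_x$ and $S_{f(x)}$.

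Next I would upgrade this to a statement about scheme-theoretic stabilizers. Since $f$ is equivariant, the underlying set map satisfies $f(gx)=g\cdot f(x)$, so every $g\in S_x$ also lies in $S_{f(x)}$; that is, $S_x\subseteq S_{f(x)}$ as subgroups of $G$. Together with the isomorphism $S_x\cong S_{f(x)}$ just obtained, this forces the equality $S_x=S_{f(x)}$. Hence the additional hypothesis of Lemma~\ref{lemma:setschemestabilizer} is met for the pair $(x,f(x))$, and we conclude that $f$ induces an isomorphism $G_x\xrightarrow{\cong}G_{f(x)}$ of the scheme-theoretic stabilizers.

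At this point $f$ satisfies exactly the three conditions in the characterization of $H$-covers given in the lemma immediately preceding this corollary: $f$ is \'etale as a morphism of schemes, every $y\in Y$ admits a point $x\in X$ mapping to it with $f$ inducing an isomorphism on residue fields, and $f$ induces the isomorphism of scheme-theoretic stabilizers required by condition~$(*)$. Therefore $f$ is an $H$-cover. The whole argument is formal once the three lemmas are in place; the only point requiring a moment's care is the passage from an abstract isomorphism of set-theoretic isotropy groups to the literal equality $S_x=S_{f(x)}$ needed to invoke Lemma~\ref{lemma:setschemestabilizer}, and this is precisely where equivariance of $f$ is used. I do not expect a genuine obstacle.
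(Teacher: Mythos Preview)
Your proof is correct and follows exactly the route the paper intends: it combines Lemma~\ref{lemma:equivalenteqnis}(3), Lemma~\ref{lemma:setschemestabilizer}, and the preceding characterization of $H$-covers, which is precisely what the paper's one-line proof gestures at. Your extra care in upgrading the abstract isomorphism $S_x\cong S_{f(x)}$ to the literal equality $S_x=S_{f(x)}$ via equivariance is a nice clarification, though arguably already implicit in the word ``induces'' in Lemma~\ref{lemma:equivalenteqnis}(3).
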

\begin{proof}
 This follows from the above lemma combined with Lemma \ref{lemma:setschemestabilizer}.
\end{proof}

The following example reminds one to be careful while thinking about isotropy groups and fixed points.

\begin{bsp}\label{ex:fixedpointsofgalois}
Let $L:k$ be a finite Galois extension and $G=\mathrm{Gal}(L:k)$. The induced $G$-action on $\spec(L)$ has empty fixed points $\spec(L)^G=\emptyset$. This is since $\spec(L)^G$ is by construction a closed subscheme of $\spec(L)$ and 
$$
\Hom_{\smk}(\spec(L),\spec(L)^G)\cong\Hom_{\Gsmk}(\spec(L)_{\mathrm{tr}},\spec(L))=\emptyset.
$$
 
The set-theoretic stabilizer $S_\ast$ of the unique point $\ast$ is obviously the whole group $G$, but the scheme theoretic stabilizer is trivial, that is $G_\ast=\spec(L)$, since the action is free and hence the left vertical arrow in the pullback diagram
\begin{equation*}
\begin{xy}
\xymatrix{
G_\ast\ar[r]\ar[d]_\cong& G\times \spec(L)\ar[d]^\cong_\Psi\\
\spec(L)\ar[r]^(0.4)\Delta&\spec(L)\times\spec(L)
}
\end{xy}
\end{equation*}
is an isomorphism as well.
\end{bsp}

\begin{lemma}
 The $H$-Nisnevich topology is subcanonical, i.e.~representable pre\-sheaves are sheaves on $\Gsmk$.
\end{lemma}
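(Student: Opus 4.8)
The plan is to reduce the statement to the classical subcanonicity of the Nisnevich topology on $\smk$, using the forgetful functor together with a fixed-point argument. First I would verify that the $H$-covers form a pretopology on $\Gsmk$: they contain all isomorphisms and are closed under composition and base change. Closure under composition is immediate from the functoriality of the fixed-point functors and the transitivity of Nisnevich covers. For base change, if $f\colon U\to Y$ is an $H$-cover and $g\colon Y'\to Y$ is any morphism of $\Gsmk$, then $U\times_Y Y'\to Y'$ is again \'etale, so $U\times_Y Y'$ is smooth and lies in $\Gsmk$; moreover each $(-)^H\colon\Gsmk\to\smk$ is a right adjoint (cf.~\eqref{eq:schemeadj} and its extension to subgroups) and hence preserves this fibre product, so $(U\times_Y Y')^H = U^H\times_{Y^H}(Y')^H\to (Y')^H$ is a base change of the Nisnevich cover $f^H$ and is therefore itself a Nisnevich cover. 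By the standard criterion --- a presheaf on a site whose topology is generated by a pretopology is a sheaf precisely when it satisfies the equalizer condition against every member of the pretopology --- it then suffices to prove that for each $X\in\Gsmk$ and each $H$-cover $f\colon U\to Y$ the diagram
\[
h_X(Y)\longrightarrow h_X(U)\rightrightarrows h_X(U\times_Y U)
\]
is an equalizer of sets, where $h_X=\Hom_{\Gsmk}(-,X)$; note $U\times_Y U\in\Gsmk$ since $f$ is \'etale.

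Next I would use the functor $\omega\colon\Gsmk\to\smk$ forgetting the $G$-action, which preserves the fibre products occurring above. For any $Z\in\Gsmk$ the set $\Hom_{\smk}(\omega Z,\omega X)$ is a $G$-set via conjugation by the action automorphisms of $Z$ and $X$, a morphism being $G$-fixed exactly when it is $G$-equivariant; hence $h_X(Z)=\Hom_{\smk}(\omega Z,\omega X)^G$, naturally in $Z$. Taking $H$ trivial in Definition~\ref{def:HNistop}, the morphism $\omega f\colon\omega U\to\omega Y$ is a Nisnevich cover in $\smk$. Since the Nisnevich topology on $\smk$ is subcanonical (it is coarser than the fpqc topology), $\Hom_{\smk}(-,\omega X)$ is a Nisnevich sheaf, so
\[
\Hom_{\smk}(\omega Y,\omega X)\longrightarrow\Hom_{\smk}(\omega U,\omega X)\rightrightarrows\Hom_{\smk}(\omega U\times_{\omega Y}\omega U,\omega X)
\]
is an equalizer, and it is one of $G$-sets because every map in it is induced by a morphism of $\Gsmk$. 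Applying $(-)^G\colon G\text{-}\set\to\set$, which is a right adjoint and so preserves equalizers, and using $\omega(U\times_Y U)=\omega U\times_{\omega Y}\omega U$, produces precisely the required equalizer for $h_X$.

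Once these ingredients are in place the argument is purely formal, so I do not anticipate a genuine obstacle; the steps deserving the most care are the pretopology verification --- in particular that the fibre products of smooth $G$-schemes that appear actually lie in $\Gsmk$ and are preserved by $\omega$ and by the functors $(-)^H$ --- and the bookkeeping that the non-equivariant equalizer is genuinely a diagram of $G$-sets with $G$-equivariant maps, which is what legitimizes passage to $G$-fixed points.
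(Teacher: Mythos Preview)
Your proposal is correct and follows essentially the same route as the paper: forget to $\smk$ via the underlying-scheme functor, use that the trivial-subgroup case of Definition~\ref{def:HNistop} makes the underlying family an ordinary Nisnevich cover, invoke subcanonicity of the Nisnevich topology, and then recover $G$-equivariance of the glued morphism. The only differences are cosmetic: you add an explicit verification that the $H$-covers form a pretopology (which the paper leaves implicit), and you package the equivariance step categorically---as applying the limit-preserving functor $(-)^G$ on $G$-sets to a diagram of $G$-sets---whereas the paper spells this out as a direct diagram chase using that $\coprod_i U_i\to Y$ is an epimorphism.
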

\begin{proof}
 Let $\{Z_i\xrightarrow{\iota_i} Z\}_i$ be an $H$-Nisnevich covering and let $U:\Gsmk\to\smk$ be the forgetful functor. $U$ is faithful and as a (trivial) fixed point functor $U$ takes the chosen covering to a Nisnevich covering in $\smk$. Hence, the bottom row in the diagram
\begin{equation*}
\begin{xy}
\xymatrix{
\Hom_G(Z,X)\ar[r]\ar[d]&\prod\Hom_G(Z_i,X)\ar[d]\ar@<2pt>[r]\ar@<-1pt>[r]&\prod\Hom_G(Z_i\times_ZZ_j,X)\ar[d]\\
\Hom_k(UZ,UX)\ar[r]&\prod\Hom_k(UZ_i,UX)\ar@<2pt>[r]\ar@<-1pt>[r]&\prod\Hom_k(UZ_i\times_{UZ}UZ_j),UX)
}
\end{xy}
\end{equation*}
is an equalizer and all vertical arrows are injective. A family $(\sigma_i)_i$ in  the product $\prod\Hom_G(Z_i,X)$ which is equalized  by the double arrow is mapped to a family in $\prod\Hom_k(UZ_i,UX)$ which is also equalized and therefore comes from a morphism $g$ in $\Hom_k(UZ,UX)$. To see that $g$ is equivariant we have to show that the square labeled with '?' commutes in the following diagram. 
\begin{equation*}
\begin{xy}
\xymatrix{
G\times\coprod Z_i\ar[r]^{\id_G\times\coprod f_i}\ar[d] &G\times Z \ar[d]^{\alpha_Z}\ar[r]^{\id_G\times g}&G\times X\ar[d]^{\alpha_X}\\
\coprod Z_i\ar[r]^{\coprod\iota_i}\ar@<-5pt>@{}[ur]^{\circlearrowleft}& Z\ar[r]_g\ar@{}@<-5pt>[ur]^{?} & X
}
\end{xy}
\end{equation*}
First note that all $\iota_i$ and $g\circ\iota_i$ are equivariant. The square in question commutes since both the outer rectangle and the left square commute, and since $\id_G\times\coprod f_i$ is an epimorphism.
\end{proof}

\begin{corollary}
 The equivariant Nisnevich topology is also subcanonical.
\end{corollary}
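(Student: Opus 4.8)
The plan is to deduce this purely formally from the preceding lemma together with Corollary~\ref{cor:eqNisisHNis}. The underlying principle is that being subcanonical is inherited by any coarser topology: if $\tau\subseteq\tau'$ are Grothendieck topologies on the same category and every representable presheaf is a $\tau'$-sheaf, then every representable presheaf is also a $\tau$-sheaf, because the $\tau$-covering sieves form a subclass of the $\tau'$-covering sieves, so the $\tau$-descent condition is a special case of the $\tau'$-descent condition. I would state this as the first step.

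The second step is to check that the equivariant Nisnevich topology is coarser than the $H$-Nisnevich topology in this sieve-theoretic sense. By Corollary~\ref{cor:eqNisisHNis}, every equivariant Nisnevich covering family is an $H$-covering family; since a sieve is covering exactly when it contains a covering family, it follows that every covering sieve of the equivariant Nisnevich topology is also a covering sieve of the $H$-Nisnevich topology. Hence the two topologies are comparable, with the equivariant Nisnevich topology the smaller one.

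For the third step I would simply invoke the preceding lemma: every representable presheaf on $\Gsmk$ is an $H$-Nisnevich sheaf, hence by the principle of the first step it is an equivariant Nisnevich sheaf, which is the assertion. I do not expect any real obstacle here; the only point requiring mild care is to phrase the comparison of the two topologies at the level of covering sieves (so that the sheaf condition genuinely restricts) rather than at the level of the chosen generating covers, and this is immediate from the characterisations already established.
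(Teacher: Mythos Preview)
Your argument is correct and is precisely the reasoning the paper has in mind: the corollary is stated without proof because it follows immediately from the preceding lemma together with Corollary~\ref{cor:eqNisisHNis}, via the general fact that a topology coarser than a subcanonical one is again subcanonical.
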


\begin{lemma}\label{lemma:continousmap}
 For all $H\leq G$, the $H$ fixed points functor $(-)^H:\Gsmk\to\smk$ is continuous map of sites.
\end{lemma}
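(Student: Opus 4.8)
The plan is to verify the standard criterion for a functor of sites to be continuous: if the source site has the fibre products occurring in its covering families, then such a functor is continuous provided it preserves those fibre products and carries covering families to covering families. Here the source is $(\Gsmk, H\text{-Nis})$ and the target is $(\smk, \text{Nis})$, so it suffices to supply these two verifications for the functor $(-)^H$.

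First I would observe that the pullbacks at issue really do exist inside $\Gsmk$. An $H$-Nisnevich cover $f\colon X\to Y$ is in particular an \'etale morphism of schemes, so for any $Y'\to Y$ in $\Gsmk$ the scheme $X\times_Y Y'$ is \'etale over $Y'$ and hence smooth and separated over $k$; endowed with the diagonal $G$-action it lies in $\Gsmk$, and the analogous remark holds in $\smk$. Next I would show that $(-)^H$ preserves such pullbacks. The quickest route is to use that $(-)^H\colon\Gsmk\to\smk$ is a right adjoint: it is the right adjoint of the composite adjunction $\smk\rightleftarrows H\smk\rightleftarrows\Gsmk$ recalled above (equivalently, it is right adjoint to $X\mapsto\coprod_{G/H}X\tr$), and right adjoints preserve all limits that exist in the source; in particular the canonical comparison $(X\times_Y Y')^H\to X^H\times_{Y^H}Y'^H$ is an isomorphism.

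The remaining point is that $(-)^H$ sends $H$-covers to Nisnevich covers, and this is nothing more than Definition~\ref{def:HNistop} read off for the single subgroup $H$ itself. Assembling these observations with the criterion yields the continuity of $(-)^H$. I do not anticipate a genuine obstacle: the only step requiring mild care is confirming that the relevant fibre products stay inside $\Gsmk$ (stability of smoothness and separatedness under \'etale base change) and are preserved by $(-)^H$, after which the statement is forced by the definition of the $H$-Nisnevich topology. Finally, I would add the remark that, since $(-)^H$ in fact preserves all finite limits that it admits, it determines a morphism of the associated sheaf topoi whose inverse image functor is the sheafification of the left Kan extension $(-)^H\colon\sPre(\Gsmk)\to\sPre(\smk)$ appearing in the introduction.
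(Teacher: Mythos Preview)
Your proposal is correct and follows exactly the approach of the paper: the paper's proof is simply a citation to \cite[III.~Proposition~1.6]{SGA4}, which is precisely the continuity criterion (cover-preservation plus preservation of the relevant fibre products) whose hypotheses you verify explicitly. In other words, you have unpacked the reference rather than taken a different route.
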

\begin{proof}
 \cite[III.Proposition 1.6.]{SGA4}
\end{proof}

\begin{lemma}\label{lemma:adjunction}
The adjunction \eqref{eq:schemeadj} extends via left Kan extension of $(-)^G$ to an adjunction
\begin{equation}\label{eq:adjunction}
 ((-)^G)_*:\sShv(\Gsmk)\rightleftarrows\sShv(\smk):R^G,
\end{equation}
where the right adjoint is composition with $(-)^G$.
\end{lemma}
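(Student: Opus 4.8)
The plan is to realise \eqref{eq:adjunction} as the adjunction canonically attached, on sheaf categories, to the continuous functor of sites $(-)^G$, building on Lemma~\ref{lemma:continousmap}. Write $u:=(-)^G\colon\Gsmk\to\smk$, let $R^G\colon\Pre(\smk)\to\Pre(\Gsmk)$ be precomposition with $u$, so that $(R^G\mathcal{E})(Z)=\mathcal{E}(Z^G)$, and let $L\colon\Pre(\Gsmk)\to\Pre(\smk)$ be the left Kan extension of the composite $\Gsmk\xrightarrow{u}\smk\hookrightarrow\Pre(\smk)$ along the Yoneda embedding of $\Gsmk$; this $L$ is the ``left Kan extension of $(-)^G$'' appearing in the statement. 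Standard Kan extension formalism gives that $L$ preserves colimits, is left adjoint to $R^G$ on presheaf categories, and sends a representable $y_{\Gsmk}X$ to the representable $y_{\smk}(X^G)$.

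First I would feed in Lemma~\ref{lemma:continousmap}: continuity of $u$ says precisely that $R^G$ carries sheaves to sheaves, so it restricts to $R^G\colon\Shv(\smk)\to\Shv(\Gsmk)$, and, applied levelwise, likewise restricts to simplicial sheaves. Writing $a\colon\Pre(\smk)\to\Shv(\smk)$ for sheafification, with $\iota$ the fully faithful inclusion $\Shv(\smk)\hookrightarrow\Pre(\smk)$ and $a\dashv\iota$, I would then set $((-)^G)_*:=a\circ L$ and read off $((-)^G)_*\dashv R^G$ on sheaves from the chain of natural bijections (suppressing the inclusions of sheaves into presheaves)
\begin{align*}
\Hom_{\Shv(\smk)}\bigl(aL\mathcal{F},\mathcal{E}\bigr)
 &\cong\Hom_{\Pre(\smk)}\bigl(L\mathcal{F},\mathcal{E}\bigr)\\
 &\cong\Hom_{\Pre(\Gsmk)}\bigl(\mathcal{F},R^G\mathcal{E}\bigr)\\
 &\cong\Hom_{\Shv(\Gsmk)}\bigl(\mathcal{F},R^G\mathcal{E}\bigr),
\end{align*}
for $\mathcal{F}\in\Shv(\Gsmk)$ and $\mathcal{E}\in\Shv(\smk)$, using successively $a\dashv\iota$, the presheaf adjunction $L\dashv R^G$, and the fullness of $\Shv(\Gsmk)$ in $\Pre(\Gsmk)$ (both $\mathcal{F}$ and $R^G\mathcal{E}$ being sheaves).

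Next I would prolong this to simplicial objects — a simplicial sheaf being a simplicial object in $\Shv$ — so that $((-)^G)_*$, $R^G$, and the adjunction between them pass levelwise to $\sShv(\Gsmk)$ and $\sShv(\smk)$, giving \eqref{eq:adjunction}. That this adjunction indeed extends \eqref{eq:schemeadj} I would see as follows: since the Nisnevich topology on $\smk$ is subcanonical, $a(y_{\smk}(X^G))=y_{\smk}(X^G)$, so $((-)^G)_*(y_{\Gsmk}X)\cong y_{\smk}(X^G)$; thus, under the Yoneda embeddings — which land in sheaves by subcanonicity of the $H$-Nisnevich topology proved above — the new left adjoint $((-)^G)_*$ restricts to the functor $(-)^G\colon\Gsmk\to\smk$ of \eqref{eq:schemeadj}.

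I do not expect a real obstacle here: the substantive input is the continuity of the fixed point functors, which is Lemma~\ref{lemma:continousmap}, and everything beyond it is formal. The one point that wants a little care is that, on sheaves, $((-)^G)_*$ is the sheafification $a\circ L$ of the naive left Kan extension $L$ — which by itself need not preserve the sheaf condition — and that this extra sheafification, being computed levelwise, is harmless under the passage to simplicial sheaves.
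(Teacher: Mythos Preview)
Your proof is correct and follows essentially the same route as the paper: build the presheaf adjunction $L\dashv R^G$ by left Kan extension, invoke Lemma~\ref{lemma:continousmap} to see that $R^G$ preserves sheaves, and then set $((-)^G)_*=a\circ L$ (restricted to sheaves) to obtain the sheaf-level left adjoint. You are in fact more explicit than the paper, spelling out the adjunction bijections and verifying via subcanonicity that the construction restricts along the Yoneda embeddings to the scheme-level adjunction \eqref{eq:schemeadj}.
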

\begin{proof}
 Consider the situation
\begin{equation*}
\begin{xy}
\xymatrix@R=10pt{
\Gsmk\ar[rr]^Y\ar[rd]_{(-)^G}&&\sPre(\Gsmk)\ar@{-->}[dd]_{L}\\
&\smk\ar[rd]_Y&\\
&&\sPre(\smk)\ar@<-1ex>@{-->}[uu]_{R}
}
\end{xy}
\end{equation*}
where $L$ is the left Kan extension of $Y\circ(-)^G$ along the horizontal Yoneda embedding $Y$ and $R$ is the right adjoint of $L$. The right adjoint $R$ is given by composition with $(-)^G$, which is a continuous map of sites and so $R$ restricts to a functor $R'$ in 
\begin{equation*}
\begin{xy}
\xymatrix@R=20pt{
\sPre(\Gsmk)\ar@<0.5ex>[r]^{a_1}\ar@<-1ex>[d]_{L}&\sShv(\Gsmk)\ar@<0.5ex>[l]^{i_1}\\
\sPre(\smk)\ar@<0.5ex>[r]^{a_2}\ar[u]_R&\sShv(\smk)\ar[u]_{R'}\ar@<0.5ex>[l]^{i_2}
}
\end{xy}
\end{equation*}
of sheaves with respect to the Nisnevich (resp.~$H$-Nisnevich) topology. Thus, we have that $((-)^G)_*:=a_2Li_1$ is right adjoint to $R^G$.
\end{proof}

From now on we will mostly leave sheaves aside and focus on a theory of presheaves. The few statements about sheaves we collected so far were just given to allow a study of points for this $H$-Nisnevich topology on $\Gsmk$ in the next subsection.

For any subgroup $H\leq G$, we define the $H$-fixed points functor 
\begin{equation}\label{eq:Hfixed}
(-)^H:\sPre(\Gsmk)\to \sPre(\smk)
\end{equation}
as the composite
\begin{equation*}
\begin{xy}
\xymatrix@R=0cm{
\sPre(\Gsmk)\ar[r]^{\res_H}&\sPre(H\smk)\ar[r]^{(-)^H}&\sPre(\smk)\\
X\ar@{|->}[r]&X(G\times_H-),&
}
\end{xy}
\end{equation*}
where $\res_H$ is the restriction functor or forgetful functor. Note that precomposing with the induction functor $G\times_H-$ coincides with the left Kan extension of $\res_H:\Gsmk\to H\smk$. Hence we could have equivalently defined $H$-fixed points as a left Kan extension in one step.

\begin{rem}\label{rem:fixedisright}
The functor $(-)^G:\sPre(\Gsmk)\to\sPre(\smk)$ is also right adjoint which can be seen as follows. On the scheme level we have the adjunction $(-)\tr:\smk\rightleftarrows\Gsmk:(-)^G$ with the left adjoint given by the trivial $G$-action functor $(-)\tr$. The right adjoint $R$ to the left Kan extension of $(-)\tr$ along the obvious Yoneda embedding is given by precomposition with $(-)\tr$ and hence commutes with colimits. Further, for a representable sheaf $\widetilde{X}$ we evaluate
$$
R(\widetilde{X})(U)\cong\Hom_{\Gsmk}(U\tr,X)\cong\Hom_{\smk}(U,X^G)=\widetilde{X^G}(U)
$$
and note that $R$ and $(-)^G$ coincide on representables and therefore are equal. The same arguments work to show that $\res_H:\sPre(\Gsmk)\to\sPre(H\smk)$ is also right adjoint and we eventually note that the $H$-fixed points functor $(-)^H:\sPre(\Gsmk)\to\sPre(\smk)$ from \eqref{eq:Hfixed} is a left and right adjoint functor, for all $H\leq G$.
\end{rem}

\section{Equivariant Motivic Homotopy Theory}
The following example was explained to me by Ben Williams. It shows that local weak equivalences with respect to the equivariant Nisnevich topology can not be detected by the family $\{(-)^H\}_{H\leq G}$ of fixed-point functors.

\begin{bsp}
 Let $Y$ in $\ZZ/2\text-\sm{\CC}$ be given by the disjoint union $\GG_m\coprod\GG_m$ be equipped with the $\ZZ/2$ action permuting the summands. Let $X=\GG_m=\spec(\CC[T,T^{-1}])$ carry the $\ZZ/2$ action induced by $T\mapsto -T$.
 We define a $\ZZ/2$-equivariant morphism
 $$
  p:=\id\coprod\sigma:Y\to X,
 $$
 where $\sigma$ is the non-trivial automorphism acting on $X$. Note that the fixed-point morphisms 
\begin{align*}
p^e&={\GG_m}\coprod{\GG_m}\xrightarrow{\id\coprod\id}\GG_m \text{ and}\\
p^G&=\id_\emptyset  
\end{align*}
are Nisnevich covers in the usual non-equivariant sense. Now, consider the coequalizer diagram
\begin{equation*}
\begin{xy}
\xymatrix{
Y\times_XY\ar@<0.5ex>[r]\ar@<-0.5ex>[r]&Y\ar[r]\ar^p[dr]&W\ar@{-->}^h[d]\\
&&X
}
\end{xy}
\end{equation*}
The map $h$ is not a local weak equivalence in the equivariant Nisnevich topology and $p$ is not a cover in that topology. The reason is that the generic point of $X=\GG_m$ inherits an action and does not lift to $Y$: There is a map
$$
(t\mapsto -t) \circlearrowright \spec(\CC(t))\to X=\GG_m,
$$
but the value of the point at $Y$ and $W$ is $\emptyset$ since
$$
\emptyset=\Hom_G(\CC(t),Y)\twoheadrightarrow\Hom_G(\CC(t),W).
$$
Hence, $h$ is not a local weak equivalence for the equivariant Nisnevich topology and the morphism $p$ can not be a covering for this topology.
\end{bsp}

\subsection{Characterization of Unstable Equivalences}
\label{sec:charofunstableweq}

Recall that a point $x$ in a topos $T$ is a geometric morphism $x:\set\to T$ or equivalently, by Freyd's Theorem, a functor $x^*:T\to\set$ which commutes with colimits and finite limits. In this subsection $\Gsmk$ is equipped with the $H$-Nisnevich topology by default.

Denote by $\mathcal{H}\mathrm{ensel}:=\{x^*:F\mapsto F(\spec(\mathcal O^{h}_{X,x})\mid x\in X\}_X$ the set of functors indexed over all $X$ in a small skeleton of $\smk$. This gives a conservative family of points for the Nisnevich topology on $\smk$ \cite[Lemma 3.1.11]{mv99}, i.e. a morphism $f$ in $\sPre(\smk)$ is local weak equivalence if and only if $x^*f$ is a weak equivalence of simplicial sets for all $x,X$.
 
\begin{lemma}\label{lemma:fixedarepoints}
Let $x^*$ be a point in $\sShv(\smk)$. Then the composition $x^*\circ (-)^H$ is a point in $\sShv(\Gsmk)$. Hence, if $f\in\sPre(\Gsmk)$ is a local weak equivalence, then $f^H$ is a local weak equivalence in $\sPre(\smk)$. 
\end{lemma}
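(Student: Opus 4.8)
The plan is to exhibit $x^*\circ(-)^H$ as a geometric morphism by checking Freyd's criterion: it must preserve colimits and finite limits. Preservation of colimits is immediate from the observations already collected, since $(-)^H:\sPre(\Gsmk)\to\sPre(\smk)$ is a left adjoint (it is a left Kan extension, as noted after \eqref{eq:Hfixed}, and also a right adjoint by Remark \ref{rem:fixedisright}), and $x^*$ is a point and hence preserves colimits; and one checks that $(-)^H$ restricts to sheaves (using Lemma \ref{lemma:continousmap}, that $(-)^H$ is a continuous map of sites, together with Lemma \ref{lemma:adjunction}). So the composite $x^*\circ(-)^H:\sShv(\Gsmk)\to\set$ preserves all colimits.

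The substantive point is that $x^*\circ(-)^H$ preserves finite limits. Here I would not argue that $(-)^H$ preserves finite limits on all presheaves — that is false in general — but rather identify the composite with a stalk functor directly. Concretely, for a point $x$ coming from $\mathcal{H}\mathrm{ensel}$, say $x^*F = F(\spec \mathcal{O}^h_{U,u})$ for $U\in\smk$ and $u\in U$, I would unwind the definition of $(-)^H$ as $F\mapsto (\res_H F)^H = \big(V\mapsto F(G\times_H V)\big)^H$ and then take the stalk, obtaining something of the form $F^H(\spec\mathcal{O}^h_{U,u}) \cong F\big(G\times_H (\spec\mathcal{O}^h_{U,u})^{?}\big)$ — more precisely I want to recognize this as evaluation of $F$ at a (pro-)object of $\Gsmk$ built from a Hensel local scheme equipped with a suitable $G$-action, i.e. as the stalk at a point of the $H$-Nisnevich topos on $\Gsmk$. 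Since evaluation at a point of a site is by construction a filtered colimit of representables followed by the inclusion, and filtered colimits commute with finite limits in $\set$, this identification immediately yields left-exactness. The key input is that the left Kan extension $(-)^H$ sends the representable presheaf $Y$ to the representable presheaf on its image, so that on the filtered diagram computing the Hensel stalk the functor $(-)^H$ acts by a reindexing, and the colimit is again filtered.

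The main obstacle I anticipate is making the last identification precise: one must show that the pro-object of $\Gsmk$ arising as $(-)^H$ applied to (the pro-system defining) a Hensel local scheme is cofinal among, or at least defines the same stalk functor as, an honest point of the $H$-Nisnevich site on $\Gsmk$. This requires knowing that $(-)^H$ interacts well with the filtered system of étale neighborhoods — i.e. that taking $H$-fixed points (after restriction/induction) of an étale neighborhood system is again, up to cofinality, an $H$-Nisnevich neighborhood system of the relevant $G$-scheme point. The input for this is precisely the characterization of $H$-covers in terms of residue fields and scheme-theoretic stabilizers proved in the lemma before Corollary \ref{cor:eqNisisHNis}, which controls when a fixed-point map is Nisnevich-local; I would invoke that to check the neighborhood condition.

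Granting the lemma, the final sentence is a formality: a morphism $f$ in $\sPre(\smk)$ is a local weak equivalence iff $x^*f$ is a weak equivalence of simplicial sets for every $x\in\mathcal{H}\mathrm{ensel}$, and dually $f\in\sPre(\Gsmk)$ is a local weak equivalence iff $y^*f$ is a weak equivalence for all points $y$ of $\sShv(\Gsmk)$ — but the family $\{x^*\circ(-)^H : x\in\mathcal{H}\mathrm{ensel},\ H\leq G\}$ is among those points, so if $f$ is a local weak equivalence then $x^*(f^H) = (x^*\circ(-)^H)(f)$ is a weak equivalence for all such $x$ and $H$, whence $f^H$ is a local weak equivalence in $\sPre(\smk)$ for every $H\leq G$, as claimed.
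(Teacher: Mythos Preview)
You have the key fact in hand and then fail to use it. In your first paragraph you correctly observe, citing Remark~\ref{rem:fixedisright}, that $(-)^H$ is a \emph{right} adjoint. A right adjoint preserves all limits, in particular finite limits. That is the entire content of the ``substantive point'' you go on to worry about. Your subsequent claim that ``$(-)^H$ preserves finite limits on all presheaves \ldots\ is false in general'' directly contradicts the right-adjoint property you just cited, and is simply wrong in this case.

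The paper's proof is exactly this one-liner: $(-)^H$ is both a left adjoint (hence preserves colimits) and a right adjoint (hence preserves limits), so the composite $x^*\circ(-)^H$ preserves colimits and finite limits, and is therefore a point. Your alternative route via identifying the composite with an explicit stalk functor on the $H$-Nisnevich site is not needed, and as you yourself acknowledge it runs into genuine cofinality issues that would require real work to resolve. Drop that detour; once you use the right-adjoint property, the proof is two sentences long and your final paragraph already contains the correct deduction of the second claim.
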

\begin{proof}
By Remark \ref{rem:fixedisright} the left Kan extension $$(-)^H:\sPre(\Gsmk)\to\sPre(\smk)$$ is also a right adjoint and therefore preserves limits. As a left adjoint it preserves colimits and hence $x^*\circ(-)^H$ is a point in $\sShv(\Gsmk)$. Thus, for any local weak equivalence $f\in\sPre(\Gsmk)$ the morphism $x^*f^H$ is weak equivalence of simplicial sets, so $f^H$ is local weak equivalence in $\sPre(\smk)$.
\end{proof}

\begin{lemma}\label{lemma:fixedconservative}
 The set of functors $\sShv(\Gsmk)\to\set$ given by
$$
\left\lbrace x^*\circ(-)^H \mid H\leq G, x^*\in\mathcal{H}\mathrm{ensel}  \right\rbrace 
$$
is a conservative family of points in $\sShv(\Gsmk)$ (for the $H$-Nisnevich topology).
\end{lemma}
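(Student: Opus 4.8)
The plan is to show that the family $\{x^*\circ(-)^H\}$ detects local weak equivalences in $\sShv(\Gsmk)$, given that it consists of points (each $x^*\circ(-)^H$ is a point by Lemma \ref{lemma:fixedarepoints}). Since a family of points is conservative precisely when it jointly reflects isomorphisms of sheaves — equivalently, when the induced stalk functors detect local weak equivalences of simplicial presheaves — it suffices to prove: if $f\colon X\to Y$ in $\sPre(\Gsmk)$ has the property that $x^*(f^H)$ is a weak equivalence of simplicial sets for every $H\leq G$ and every $x^*\in\mathcal{H}\mathrm{ensel}$, then $f$ is a local weak equivalence for the $H$-Nisnevich topology. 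The reverse implication is exactly the content of Lemma \ref{lemma:fixedarepoints}, so the real work is the converse.

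The key input is the description of the $H$-Nisnevich topology via fixed points (Definition \ref{def:HNistop}) together with the characterization of its points. First I would recall that for the ordinary Nisnevich topology on $\smk$ the Hensel family $\mathcal{H}\mathrm{ensel}$ is conservative \cite[Lemma 3.1.11]{mv99}. Then, given $f$ with $x^*(f^H)$ a weak equivalence for all $H$ and all $x^*$, I conclude by that cited result that $f^H\colon X^H\to Y^H$ is a local weak equivalence in $\sPre(\smk)$ for every $H\leq G$. At this point I would invoke the characterization of $H$-Nisnevich-local weak equivalences in terms of fixed points: a morphism in $\sPre(\Gsmk)$ is a local weak equivalence for the $H$-Nisnevich topology if and only if all of its fixed-point morphisms $f^H$ are local weak equivalences on $\smk$. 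This is precisely Corollary \ref{cor:charofeqlocaleq} (stated in the introduction), whose proof is built from the fact that the $(-)^H$ are jointly continuous (Lemma \ref{lemma:continousmap}) and detect covers (Definition \ref{def:HNistop}). Hence $f$ is a local weak equivalence, which completes the argument that the family is conservative.

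The main obstacle — and the point where care is needed — is justifying that "jointly detecting local weak equivalences of simplicial presheaves" is the correct reformulation of "conservative family of points in $\sShv(\Gsmk)$", and that one may pass between the sheaf-theoretic statement and the simplicial-presheaf statement. Concretely, a conservative family of points is one for which a morphism of sheaves of sets is an isomorphism iff it is a stalkwise isomorphism; for simplicial sheaves this upgrades to: a morphism is a local weak equivalence iff it is a stalkwise weak equivalence, because the local model structure on simplicial sheaves has enough points exactly when the site does, and fibrant replacement commutes with the (exact) stalk functors. I would therefore spend a sentence recording that the $x^*\circ(-)^H$ are exact (colimit- and finite-limit-preserving), which is already established in the proof of Lemma \ref{lemma:fixedarepoints} via Remark \ref{rem:fixedisright}, so that stalkwise weak equivalence of $\sPre(\Gsmk)$-morphisms is computed by the simplicial sets $x^*(f^H)$. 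Everything else is then an assembly of results already in hand: $\mathcal{H}\mathrm{ensel}$ conservative on $\smk$ plus the fixed-point characterization of $H$-Nisnevich equivalences yields conservativity on $\Gsmk$.

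Finally, I would remark that no $\AA^1$-localization enters here; this is purely a statement about the $H$-Nisnevich-local model structure, and the analogous statement after $\AA^1$-localization (the Proposition \ref{prop:charofA1localequivariantweq} quoted in the introduction) will follow later by combining this conservativity with the good behavior of the $\AA^1$-Bousfield localization with respect to the fixed-point functors.
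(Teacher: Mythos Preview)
Your argument is circular as written. You invoke Corollary \ref{cor:charofeqlocaleq} to conclude that $f$ is an $H$-Nisnevich local weak equivalence once all $f^H$ are Nisnevich local weak equivalences, but in the paper Corollary \ref{cor:charofeqlocaleq} is deduced \emph{from} Lemma \ref{lemma:fixedconservative}, not the other way around. Your attempt to break the circle --- claiming that the corollary has an independent proof ``built from the fact that the $(-)^H$ are jointly continuous and detect covers'' --- does not actually supply such a proof. Passing from ``the $(-)^H$ detect covers of representables'' (Definition \ref{def:HNistop}) to ``the $(-)^H$ detect local weak equivalences of arbitrary simplicial presheaves'' is exactly the step that requires knowing the topos has enough points of the prescribed form; that is the content of the lemma you are trying to prove.

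The paper sidesteps this by working directly with the SGA4 criterion for conservativity (\cite[IV, Proposition 6.5]{SGA4}): a family of points of a site is conservative if and only if every family of morphisms $(f_j:X_j\to X)$ that is jointly surjective on all the given stalks is already a covering family. So one takes such a family in $\Gsmk$, observes that for each $H\leq G$ the induced family $(f_j^H:X_j^H\to X^H)$ is surjective on every $x^*\in\mathcal{H}\mathrm{ensel}$, concludes by conservativity of $\mathcal{H}\mathrm{ensel}$ on $\smk$ that each $(f_j^H)$ is a Nisnevich covering, and then reads off from Definition \ref{def:HNistop} that $(f_j)$ is an $H$-Nisnevich covering. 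The fixed-point characterization of local weak equivalences (Corollary \ref{cor:charofeqlocaleq}) then follows as a consequence; it is output, not input.
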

\begin{proof}
Let $\mathfrak{X}:=(f_j^H:X_j\to X)_{j\in J}$ be a family of morphisms in $\Gsmk$ such that 
$$
\left(   x^*(X_j^H\xrightarrow{f_j^H}X^H) \right)_{j\in J} 
$$
is surjective for all Nisnevich points $x^*\in\mathcal{H}\mathrm{ensel}$ and $H\leq G$. 
Then by \cite[Proposition 6.5.a]{SGA4}, $(  f_j^H: X_j^H\rightarrow X^H )_{j\in J} 
$ is a Nisnevich covering in $\smk$. Hence, $\mathfrak{X}$ is a $H$-Nisnevich covering.
\end{proof}

The following is also an immediate consequence.

\begin{corollary}\label{cor:charofeqlocaleq}
A morphism $f\in \sPre(\Gsmk)$ is a local weak equivalence if and only if for all subgroups $H\leq G$ the morphism $f^H$ is a local weak equivalence in $\sPre(\smk)$.
\end{corollary}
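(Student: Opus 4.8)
The plan is to deduce Corollary \ref{cor:charofeqlocaleq} directly from the two preceding lemmas, since between them they supply both implications. First I would recall the standard fact that a morphism in a simplicial presheaf category is a local weak equivalence (for a given topology) if and only if it becomes a weak equivalence of simplicial sets after applying every member of a conservative family of points; this is the reason \cite[Lemma 3.1.11]{mv99} was quoted above for $\sPre(\smk)$ with the family $\mathcal{H}\mathrm{ensel}$. The same principle applies to $\sPre(\Gsmk)$ once we know a conservative family of points, which is exactly the content of Lemma \ref{lemma:fixedconservative}: the family $\{x^*\circ(-)^H \mid H\leq G,\ x^*\in\mathcal{H}\mathrm{ensel}\}$ is conservative for the $H$-Nisnevich topology.

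For the forward direction, suppose $f$ is a local weak equivalence in $\sPre(\Gsmk)$. This is precisely Lemma \ref{lemma:fixedarepoints}: since $(-)^H$ is both a left and a right adjoint (Remark \ref{rem:fixedisright}) it preserves the relevant limits and colimits, so $x^*\circ(-)^H$ is a point of $\sShv(\Gsmk)$, and hence $x^*f^H$ is a weak equivalence of simplicial sets for every $x^*\in\mathcal{H}\mathrm{ensel}$. Running over all $x^*$ and invoking conservativity of $\mathcal{H}\mathrm{ensel}$ on $\smk$ shows $f^H$ is a local weak equivalence in $\sPre(\smk)$, for every $H\leq G$.

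For the converse, assume $f^H$ is a local weak equivalence in $\sPre(\smk)$ for all $H\leq G$. Then for every $x^*\in\mathcal{H}\mathrm{ensel}$ the map $x^*f^H$ is a weak equivalence of simplicial sets, so all members of the family $\{x^*\circ(-)^H\}$ send $f$ to a weak equivalence. By Lemma \ref{lemma:fixedconservative} this family is conservative for the $H$-Nisnevich topology, so $f$ itself is a local weak equivalence in $\sPre(\Gsmk)$. This completes both directions.

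Honestly, there is no real obstacle here: all the work has been front-loaded into Lemmas \ref{lemma:fixedarepoints} and \ref{lemma:fixedconservative}, and the corollary is a one-line bookkeeping argument combining ``local weak equivalence $\iff$ stalkwise weak equivalence for a conservative family of points'' on both sides with the compatibility $(x^*\circ(-)^H)(f)=x^*(f^H)$. The only point worth stating carefully is that we are using the \emph{same} indexing set $\mathcal{H}\mathrm{ensel}$ on the target side, so that ``$x^*f^H$ a weak equivalence for all $x^*$'' is literally the statement that $f^H$ is a local weak equivalence in $\sPre(\smk)$; this matches the definition recalled just before Lemma \ref{lemma:fixedarepoints}.
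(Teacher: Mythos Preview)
Your proposal is correct and matches the paper's approach exactly: the paper records this corollary as ``an immediate consequence'' of Lemmas~\ref{lemma:fixedarepoints} and~\ref{lemma:fixedconservative} without further argument, and what you have written is precisely the unpacking of that implication via the stalkwise characterization of local weak equivalences on both sides.
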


\begin{corollary}\label{cor:Hlocalleftquillen}
For all subgroups $H\leq G$, the adjunction 
$$
(-)^H:\sPre(\Gsmk)\rightleftarrows\sPre(\smk):R_H
$$
is a Quillen adjunction for the local injective model structures.
\end{corollary}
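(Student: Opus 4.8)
The plan is to deduce this directly from Corollary \ref{cor:charofeqlocaleq} together with the characterization of fibrations and weak equivalences in the local injective model structures. Recall that in the injective local model structure on a category of simplicial presheaves, the cofibrations are precisely the monomorphisms and the weak equivalences are the local weak equivalences. So to exhibit $((-)^H, R_H)$ as a Quillen adjunction it suffices to check that the left adjoint $(-)^H$ preserves cofibrations and trivial cofibrations.

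First I would observe that $(-)^H:\sPre(\Gsmk)\to\sPre(\smk)$ preserves monomorphisms. This is essentially formal: $(-)^H$ is computed as $\res_H$ followed by the fixed-point functor, and by Remark \ref{rem:fixedisright} the functor $(-)^H$ is simultaneously a left and a right adjoint. Being a right adjoint it preserves limits, in particular pullbacks, and a morphism $f$ is a monomorphism exactly when the diagonal $X\to X\times_Y X$ is an isomorphism; applying $(-)^H$ and using that it preserves products and pullbacks shows $f^H$ is again a monomorphism. (Alternatively, one notes that monomorphisms of simplicial presheaves are detected pointwise, $(-)^H$ is built from left Kan extension, and one checks the relevant colimit is a disjoint union of values, which is exact.) Hence $(-)^H$ sends cofibrations to cofibrations.

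Next I would handle trivial cofibrations. A trivial cofibration in the local injective structure is a monomorphism that is also a local weak equivalence. By the previous paragraph its image under $(-)^H$ is again a monomorphism, and by Corollary \ref{cor:charofeqlocaleq} — or already by Lemma \ref{lemma:fixedarepoints}, which states that $(-)^H$ carries local weak equivalences to local weak equivalences — its image is again a local weak equivalence. Therefore $(-)^H$ sends trivial cofibrations to trivial cofibrations, and the adjunction $((-)^H, R_H)$ is a Quillen adjunction.

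The only genuinely load-bearing input here is the preservation of local weak equivalences, and that has already been established in Lemma \ref{lemma:fixedarepoints} via the fact that $(-)^H$ is both a left and a right adjoint (so that postcomposing a Nisnevich point with $(-)^H$ yields a point of $\sShv(\Gsmk)$). Thus the potential obstacle — whether a left Quillen functor criterion applies even though $(-)^H$ is not obviously left Quillen for a projective-type structure — dissolves, because in the \emph{injective} local model structure the cofibrations are just the monomorphisms, which $(-)^H$ manifestly preserves. So the proof is short: cofibrations go to cofibrations because monomorphisms are preserved, and trivial cofibrations go to trivial cofibrations by Lemma \ref{lemma:fixedarepoints}; hence $(-)^H$ is left Quillen with right adjoint $R_H$.
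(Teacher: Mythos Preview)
Your proof is correct and follows essentially the same route as the paper's: use Remark~\ref{rem:fixedisright} to see that $(-)^H$ is a right adjoint and hence preserves monomorphisms, and invoke Lemma~\ref{lemma:fixedarepoints}/Corollary~\ref{cor:charofeqlocaleq} for preservation of local weak equivalences. The paper's argument is just a terser version of exactly what you wrote.
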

\begin{proof}
We have just concluded that $(-)^H$ preserves local weak equivalences. Because of being right adjoint (and the fact that both categories have pullbacks) the functor $(-)^H$ also preserves monomorphisms, i.e. local injective cofibrations.
\end{proof}

To achieve the same result for $\AA^1$-local weak equivalences we cite a result of Hirschhorn which takes care of the Bousfield localization on both sides of a Quillen adjunction.

\begin{proposition}\label{prop:hirschhorn} Let $F:\C\rightleftarrows\D:G$ be a Quillen pair and let $K$ be a class of morphisms in $\C$. Denote by $L_K\C$, resp.~$L_{\LL_FK}\D$, the left Bousfield localization of $\C$ with respect to $K$, resp.~of $\D$ with respect to the image of $K$ under the left derived of $F$. Then $F:L_K\C\rightleftarrows L_{\LL_FK}\D :G$ remains a Quillen pair. 
\end{proposition}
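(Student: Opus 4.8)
The plan is to deduce Proposition~\ref{prop:hirschhorn} directly from Hirschhorn's general theory of left Bousfield localizations applied to a Quillen pair. First I would recall the characterization of the localized model structures: if $\C$ is a left proper cellular (or combinatorial) model category and $K$ a set of morphisms, then $L_K\C$ has the same cofibrations as $\C$, its fibrant objects are the $K$-local objects, and a map between cofibrant objects is a weak equivalence in $L_K\C$ exactly when it is a $K$-local equivalence; the analogous statements hold for $L_{\LL_FK}\D$ with $\LL_FK$ a chosen set of representatives for the left derived images of the maps in $K$. The key point to verify is that $(F,G)$ remains a Quillen pair after localization, and by Hirschhorn \cite[Theorem 3.3.20]{hirschhorn2003model} it suffices to check that $F$ sends the cofibrations of $L_K\C$ to cofibrations of $L_{\LL_FK}\D$ and that $G$ sends the fibrant objects of $L_{\LL_FK}\D$ to fibrant objects of $L_K\C$.

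The first condition is immediate: the cofibrations of $L_K\C$ are the cofibrations of $\C$, the cofibrations of $L_{\LL_FK}\D$ are those of $\D$, and $F$ already preserves cofibrations since $(F,G)$ was a Quillen pair to begin with. So the substance lies in the second condition: I must show that if $Z\in\D$ is $\LL_FK$-local and fibrant (in $\D$), then $G(Z)$ is $K$-local and fibrant (in $\C$). Fibrancy of $G(Z)$ in $\C$ is again inherited from the original Quillen pair. For $K$-locality, one unwinds the definition: $G(Z)$ is $K$-local iff for every $g\colon A\to B$ in $K$, the induced map of homotopy function complexes $\Map_\C(B,G(Z))\to\Map_\C(A,G(Z))$ is a weak equivalence. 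Using the derived adjunction, $\Map_\C(A,G(Z))\simeq\Map_\D(\LL F(A),Z)$ naturally in $A$, so this map is identified with $\Map_\D(\LL F(B),Z)\to\Map_\D(\LL F(A),Z)$, i.e.\ with the map of function complexes induced by $\LL F(g)$. Since $Z$ is $\LL_FK$-local and $\LL F(g)$ is (up to the weak equivalences built into the derived functor) one of the maps in $\LL_FK$, this map is a weak equivalence, so $G(Z)$ is indeed $K$-local.

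The main obstacle — really the only place care is needed — is the compatibility of the derived adjunction isomorphism on function complexes with the passage to homotopy categories, and the bookkeeping of ``up to weak equivalence'' when comparing $\LL F(g)$ to its chosen representative in $\LL_FK$: one needs that localizing at a map is invariant under replacing it by a weakly equivalent map, and that $\Map_\D(-,Z)$ turns weak equivalences between cofibrant objects into weak equivalences when $Z$ is fibrant. Both are standard facts about homotopy function complexes in a model category \cite[Chapter 17]{hirschhorn2003model}, so once they are invoked the argument is formal. I would therefore present the proof as: (i) cite the cofibration/fibrant-object criterion for a Quillen pair between Bousfield localizations; (ii) observe cofibrations and fibrations-in-the-unlocalized-sense are unchanged; (iii) reduce $K$-locality of $G(Z)$ to the derived adjunction on mapping spaces and conclude. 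A one-line remark that left properness and cellularity (or combinatoriality), which are the hypotheses making the localizations exist, are part of the ambient standing assumptions completes the setup.
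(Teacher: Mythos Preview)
Your proposal is correct and aligns with the paper's approach: the paper's proof consists solely of the citation \cite[Theorem 3.3.20]{hirsch}, and what you have written is precisely an unpacking of Hirschhorn's argument for that theorem. The only minor remark is that the criterion you invoke (left adjoint preserves cofibrations and right adjoint preserves fibrant objects) is not itself Theorem~3.3.20 in Hirschhorn but rather the tool used to prove it; once the numbering is adjusted, your write-up is a faithful expansion of the cited result.
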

\begin{proof}
 \cite[Theorem 3.3.20]{hirsch}
\end{proof}

\begin{lemma}\label{lemma:HlocalrightQuillen}
 Let $H,K\leq G$. The composition $(-)^K\circ (-)_H:\sPre(\smk)\to\sPre(\smk)$ equals some coproduct of identities. In particular, the $H$-fixed points functors $(-)^H$ are right Quillen functors in a Quillen adjunction
 $$
 (-)_H:\sPre(\smk)\leftrightarrows\sPre(\Gsmk):(-)^H
 $$
 with respect to the local injective model structures.
\end{lemma}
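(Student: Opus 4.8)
The plan is to compute the composite $(-)^K\circ(-)_H$ explicitly on representables and then invoke Proposition \ref{prop:hirschhorn} together with Corollary \ref{cor:Hlocalleftquillen}. First I would recall that $(-)_H:\sPre(\smk)\to\sPre(\Gsmk)$ is the left Kan extension of the functor $\smk\to\Gsmk$ sending a scheme $U$ to $G\times_H U\tr$ (the induction of the trivial $H$-scheme), while $(-)^K:\sPre(\Gsmk)\to\sPre(\smk)$ is, as defined in \eqref{eq:Hfixed}, the left Kan extension of $U\mapsto (G\times_K U)^K$. Both functors are cocontinuous, so the composite is determined by its values on representables: for $U\in\smk$ one has $(U_H)^K \cong \big(G\times_H U\tr\big)^K$. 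The double coset formula for the underlying scheme gives $G\times_H U\tr \cong \coprod_{KgH\in K\backslash G/H} G/(K\cap {}^gH)\text{-many copies}$ — more precisely, as a $K$-scheme it decomposes as a coproduct indexed by $K\backslash G/H$ of induced pieces, and taking $K$-fixed points kills every summand except those on which the relevant stabilizer contains $K$. Since the $H$-action on $U\tr$ is trivial, the summand corresponding to a double coset $KgH$ contributes $U$ exactly when $K \leq {}^gH = gHg^{-1}$, i.e.\ when $g^{-1}Kg \leq H$, and contributes $\emptyset$ otherwise. Hence $(U_H)^K$ is a coproduct of copies of $U$, one for each such double coset, naturally in $U$; this is the asserted "coproduct of identities."

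Second, I would draw the consequence for model structures. The functor $(-)^H:\sPre(\smk)\to\sPre(\smk)$ being a coproduct of identity functors, it manifestly preserves local weak equivalences and monomorphisms, and one checks directly (or cites Corollary \ref{cor:Hlocalleftquillen} applied with the roles arranged appropriately) that $\big((-)_H, (-)^H\big)$ is a Quillen adjunction for the local injective model structures on $\sPre(\smk)$ and $\sPre(\Gsmk)$: the left adjoint $(-)_H$ is the restriction-to-sites map's left Kan extension, hence preserves monomorphisms, and its right adjoint is the $H$-fixed points functor, which by Lemma \ref{lemma:fixedarepoints} preserves local weak equivalences; so $(-)_H$ preserves cofibrations and trivial cofibrations.

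Third and finally, to pass to the $\AA^1$-local model structures one applies Proposition \ref{prop:hirschhorn} with $\C=\sPre(\smk)$, $\D=\sPre(\Gsmk)$, $F=(-)_H$, $G=(-)^H$, and $K$ the set of projections $\{\AA^1\times U\to U\}$. One must identify the left-derived image $\LL_F K$ inside $\sPre(\Gsmk)$: since $(-)_H$ already preserves monomorphisms between cofibrant objects and $\AA^1\times U\to U$ is a map of representables, $\LL_F K$ is (up to local weak equivalence) the set $\{(\AA^1\times U)_H\to U_H\}$, and $(\AA^1\times U)_H \cong \AA^1\times U_H$ because induction and trivial-action commute with the product by $\AA^1$ (viewed as a trivial $G$-scheme). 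Thus $\LL_F K$ consists of $\AA^1$-projections on the objects $U_H$, which are in particular $\AA^1$-local equivalences in $\sPre(\Gsmk)$, so localizing at them changes nothing beyond the ambient $\AA^1$-localization; Proposition \ref{prop:hirschhorn} then yields that $(-)_H\rightleftarrows(-)^H$ is a Quillen adjunction after $\AA^1$-localization as well.

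The main obstacle is the bookkeeping in the first step: making the double-coset decomposition of $G\times_H U\tr$ as a $K$-scheme rigorous in the algebro-geometric setting (rather than merely set-theoretically) and checking that $K$-fixed points of an induced piece $G/L\times U\tr$ is $U$ or $\emptyset$ according to whether $K\leq L$ — this uses that fixed points of a coproduct is the coproduct of fixed points and that $(G/L)^K$ is a point or empty, which follows from Theorem \ref{thm:repfixedpoints} applied to the finite constant scheme $G/L$. Everything after that is formal.
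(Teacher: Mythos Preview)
Your first step reaches the right conclusion but by a detour: you invoke a double-coset decomposition of $G\times_H U\tr$ as a $K$-scheme, whereas the paper simply observes that $(-)_H$ is the left Kan extension of $G/H\times(-)$, so on representables
\[
 ((\widetilde Y)_H)^K \cong (\widetilde{G/H\times Y})^K \cong \widetilde{(G/H)^K\times Y} \cong \coprod_{(G/H)^K}\widetilde Y,
\]
using only that $(-)^K$ commutes with products and that $Y$ carries the trivial action. Your indexing set of double cosets $KgH$ with $g^{-1}Kg\leq H$ is exactly the finite set $(G/H)^K$, so the answers agree; but the one-line product argument avoids the bookkeeping you flag as the ``main obstacle.''

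The real gap is in your second step. You never actually use the coproduct-of-identities computation to deduce that $(-)_H$ is left Quillen. The sentence ``its right adjoint \ldots preserves local weak equivalences; so $(-)_H$ preserves cofibrations and trivial cofibrations'' is a non-sequitur: knowing that the right adjoint $(-)^H$ preserves weak equivalences does not imply the left adjoint preserves trivial cofibrations. The paper's argument is the one you set up but do not execute: given a local weak equivalence $f$ in $\sPre(\smk)$, the first part gives that $((-)_H f)^K$ is a finite coproduct of copies of $f$, hence a local weak equivalence, for every $K\leq G$; then Corollary~\ref{cor:charofeqlocaleq} (the $(-)^K$ \emph{detect} local weak equivalences) forces $(-)_H f$ to be a local weak equivalence. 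That is the step you are missing. Separately, ``left Kan extension, hence preserves monomorphisms'' is not valid in general; the paper supplies the needed input, namely that $G/H\times(-):\smk\to\Gsmk$ preserves finite limits, so its Yoneda extension is flat and in particular preserves monomorphisms.

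Finally, your third step (passing to the $\AA^1$-local model via Proposition~\ref{prop:hirschhorn}) is not part of this lemma at all; that is exactly the content of the next lemma, Lemma~\ref{lemma:HA1rightQuillen}. The present statement concerns only the local injective structures.
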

\begin{proof}
 Both functors commute with colimits, so we only need to check the statement for representables. We have
 $$
 ((\widetilde{Y})_H)^K \cong \left(\widetilde{G/H\times Y}\right)^K \cong \widetilde{(G/H)^K\times Y} \cong \coprod_{(G/H)^K}\widetilde Y.
 $$
 Furthermore, the functors $(-)^K$ detect local weak equivalences by Corollary \ref{cor:charofeqlocaleq} and a (finite) coproduct of local weak equivalences is a local weak equivalence. Eventually, to check that $(-)_H$ preserves monomorphisms recall that $(-)_H$ is the left Yoneda extension of $G/H\times-:\smk\to\Gsmk$ which preserves all finite limits. Left Kan extensions of flat functors preserve finite limits and in particular monomorphisms.
\end{proof}

\begin{lemma}\label{lemma:HA1rightQuillen}
 For every subgroup $H\leq G$, the $H$-fixed points functor $(-)^H$ is a right Quillen functor in the adjunction
 $$
 (-)_H:\sPre(\smk)\leftrightarrows\sPre(\Gsmk):(-)^H
 $$
 with respect to the $\AA^1$-local injective model structures.
\end{lemma}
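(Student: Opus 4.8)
The plan is to deduce this from Lemma~\ref{lemma:HlocalrightQuillen}, which already gives a Quillen adjunction $(-)_H\dashv(-)^H$ for the \emph{local} injective model structures, by localizing both sides in the manner of Proposition~\ref{prop:hirschhorn}. Fix notation: let $K$ be the set of projections $\{\widetilde{X\times\AA^1}\to\widetilde X\mid X\in\smk\}$ (indexed over a small skeleton), so that the $\AA^1$-local injective model structure on $\sPre(\smk)$ is $L_K\sPre(\smk)$; and let $S=\{\widetilde{Z\times\AA^1}\to\widetilde Z\mid Z\in\Gsmk\}$, where $\AA^1$ carries the trivial $G$-action, so that the $\AA^1$-local injective model structure on $\sPre(\Gsmk)$ is $L_S\sPre(\Gsmk)$.

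The first real step is to identify $\LL_{(-)_H}K$. Since the cofibrations of the injective structures are the monomorphisms, every object is cofibrant, so no cofibrant replacement is needed and $\LL_{(-)_H}$ agrees with $(-)_H$ on objects and morphisms. Because $(-)_H$ is the left Yoneda extension of $G/H\times(-)\colon\smk\to\Gsmk$, it sends the representable $\widetilde{X\times\AA^1}$ to $\widetilde{G/H\times(X\times\AA^1)}$ and $\widetilde X$ to $\widetilde{G/H\times X}$; the $\AA^1$-factor inherits the trivial $G$-action, so $G/H\times(X\times\AA^1)\cong(G/H\times X)\times\AA^1$ in $\Gsmk$. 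Hence $\LL_{(-)_H}K$ is, up to isomorphism, the set of projections $\widetilde{Z\times\AA^1}\to\widetilde Z$ with $Z=G/H\times X\in\Gsmk$, and in particular $\LL_{(-)_H}K\subseteq S$.

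Now Proposition~\ref{prop:hirschhorn} applies to the Quillen pair of Lemma~\ref{lemma:HlocalrightQuillen} and yields that
\[
(-)_H:L_K\sPre(\smk)\rightleftarrows L_{\LL_{(-)_H}K}\sPre(\Gsmk):(-)^H
\]
is again a Quillen adjunction. Since $\LL_{(-)_H}K\subseteq S$, the model category $L_S\sPre(\Gsmk)$ is a further left Bousfield localization of $L_{\LL_{(-)_H}K}\sPre(\Gsmk)$, and the identity functor $L_{\LL_{(-)_H}K}\sPre(\Gsmk)\to L_S\sPre(\Gsmk)$ is left Quillen: it preserves cofibrations (unchanged) and trivial cofibrations, because every $\LL_{(-)_H}K$-local equivalence is in particular an $S$-local equivalence. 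Composing the two left Quillen functors $L_K\sPre(\smk)\xrightarrow{(-)_H}L_{\LL_{(-)_H}K}\sPre(\Gsmk)\xrightarrow{\id}L_S\sPre(\Gsmk)$ shows that $(-)_H$ is left Quillen for the $\AA^1$-local injective structures, i.e. that $(-)^H$ is right Quillen, as claimed.

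The step most in need of care is the identification of $\LL_{(-)_H}K$: one must check that passing through the left Kan extension does not disturb the triviality of the $G$-action on the $\AA^1$-factor, so that the image maps genuinely lie in the generating set $S$ of the equivariant $\AA^1$-localization; once this is pinned down, the remainder is the formal concatenation of Proposition~\ref{prop:hirschhorn} with the standard observation that the identity into a further Bousfield localization is left Quillen. Alternatively one could bypass Proposition~\ref{prop:hirschhorn} and argue directly that the right Quillen functor $(-)^H$ (for the local injective structures) carries $\AA^1$-local fibrant objects to $\AA^1$-local objects, using the adjunction isomorphisms $\Map(\widetilde X,W^H)\simeq\Map(\widetilde{X}_H,W)$ together with $\widetilde{X}_H\times\AA^1\cong\widetilde{(X\times\AA^1)}_H$; but routing through the already-cited Proposition~\ref{prop:hirschhorn} keeps the argument shortest.
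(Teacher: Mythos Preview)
Your argument is correct and follows essentially the same route as the paper: apply Proposition~\ref{prop:hirschhorn} to the Quillen pair of Lemma~\ref{lemma:HlocalrightQuillen}, observe that $\LL_{(-)_H}K$ lands inside the equivariant $\AA^1$-localizing class, and compose with the identity left Quillen functor into the further localization. Your version is a bit more explicit in justifying the containment $\LL_{(-)_H}K\subseteq S$ (via cofibrancy of all objects and the computation $(\widetilde{X})_H\cong\widetilde{G/H\times X}$), which the paper leaves implicit.
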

\begin{proof}
 By Proposition \ref{prop:hirschhorn} the Quillen adjunction 
 $$
 (-)_H:\sPre(\smk)\leftrightarrows\sPre(\Gsmk):(-)^H
 $$
 of Lemma \ref{lemma:HlocalrightQuillen} descents to a Quillen adjunction 
 \begin{equation*}
\begin{xy}
\xymatrix{
L_K\sPre(\smk)\ar@<0.5ex>[r]^{(-)_H}&L_{\mathbb L_{(-)_H}K}\sPre(\Gsmk)\ar@<0.5ex>[l]^{(-)^H}
}
\end{xy}
\end{equation*} 
of left Bousfield localizations, where $K$ is the class of morphisms represented by $\{X\times\AA^1\to X\mid X\in \smk\}$ and $\mathbb L_{(-)_H}K$ is the image of that class under the total left derived of $(-)_H$. The latter is a (proper) subclass of the class of morphisms represented by $\{X\times\AA^1\to X\mid X\in \Gsmk\}$ which is used to $\AA^1$-localize on the equivariant side. Hence, the identity gives a left Quillen functor
$$
L_{\mathbb L_{(-)_H}K}\sPre(\Gsmk)\to \sPre(\Gsmk)
$$
where the right hand side carries the $\AA^1$-local injective model structure. By composing the two Quillen adjunctions we obtain the conclusion.
\end{proof}

\begin{proposition}\label{prop:charofA1localequivariantweq}
A morphism $f\in \sPre(\Gsmk)$ is an $\AA^1$-local weak equivalence if and only if for all subgroups $H\leq G$ the morphism $f^H$ is an $\AA^1$-local weak equivalence in $\sPre(\smk)$.
\end{proposition}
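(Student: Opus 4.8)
The plan is to promote the unlocalized detection statement of Corollary \ref{cor:charofeqlocaleq} to the $\AA^1$-local setting by showing that, for every $H\le G$, the functor $(-)^H\colon\sPre(\Gsmk)\to\sPre(\smk)$ is a Quillen functor on \emph{both} sides for the $\AA^1$-local injective model structures. One half of this is exactly Lemma \ref{lemma:HA1rightQuillen}, which says $(-)^H$ is right Quillen; granting also that it is left Quillen, two consequences follow: since every object of an injective model structure is cofibrant, Ken Brown's lemma shows $(-)^H$ preserves all $\AA^1$-local weak equivalences — this is the ``only if'' direction — and, being right Quillen, $(-)^H$ preserves $\AA^1$-fibrant objects. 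The ``if'' direction will then be a two-out-of-three argument played against $\AA^1$-fibrant replacements, using Corollary \ref{cor:charofeqlocaleq} together with the standard fact about left Bousfield localization that a map between $\AA^1$-fibrant objects is an $\AA^1$-local weak equivalence precisely when it is a local weak equivalence (cf.~\cite{hirsch}).

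To see that $(-)^H$ is left Quillen for the $\AA^1$-local structures I would argue exactly as in the proof of Lemma \ref{lemma:HA1rightQuillen}, but starting from the \emph{other} of the two adjunctions attached to $(-)^H$, namely the Quillen adjunction $(-)^H\colon\sPre(\Gsmk)\rightleftarrows\sPre(\smk)\colon R_H$ for the local injective structures of Corollary \ref{cor:Hlocalleftquillen}. By Proposition \ref{prop:hirschhorn} this descends to a Quillen adjunction between the left Bousfield localizations $L_K\sPre(\Gsmk)$ and $L_{\LL_{(-)^H}K}\sPre(\smk)$, where $K$ is the class represented by $\{X\times\AA^1\to X\mid X\in\Gsmk\}$. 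Because all objects are cofibrant, $\LL_{(-)^H}K$ is just the image of $K$, and since $\AA^1$ carries the trivial action one has $(X\times\AA^1)^H\cong X^H\times\AA^1$ on representables; hence $\LL_{(-)^H}K$ is contained in the class used to $\AA^1$-localize $\sPre(\smk)$, so the identity is a left Quillen functor from $L_{\LL_{(-)^H}K}\sPre(\smk)$ to the $\AA^1$-local injective structure, and composing exhibits $(-)^H$ as the desired left Quillen functor. I expect this to be the only genuinely load-bearing step; the delicate point inside it is the verification that the derived image of the equivariant $\AA^1$-class really lands in the non-equivariant $\AA^1$-class, which is where the computation on representables and the identification $(X\times\AA^1)^H\cong X^H\times\AA^1$ (with $X^H\in\smk$ via Theorem \ref{thm:repfixedpoints}) are used.

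For the ``if'' direction, suppose $f^H$ is an $\AA^1$-local weak equivalence in $\sPre(\smk)$ for all $H\le G$. Pick $\AA^1$-fibrant replacements $X\xrightarrow{\sim}\hat X$ and $Y\xrightarrow{\sim}\hat Y$ and let $\hat f\colon\hat X\to\hat Y$ be the induced map, so that $f$ is an $\AA^1$-local weak equivalence iff $\hat f$ is. Applying $(-)^H$ and using the ``only if'' direction, the maps $X^H\to\hat X^H$ and $Y^H\to\hat Y^H$ are $\AA^1$-local weak equivalences; the resulting square commutes, so two-out-of-three forces $\hat f^H$ to be an $\AA^1$-local weak equivalence for every $H$. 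By Lemma \ref{lemma:HA1rightQuillen} the objects $\hat X^H,\hat Y^H$ are $\AA^1$-fibrant, so $\hat f^H$ is in fact a local weak equivalence for every $H$; then Corollary \ref{cor:charofeqlocaleq} gives that $\hat f$ is a local weak equivalence in $\sPre(\Gsmk)$, hence — being a map between $\AA^1$-fibrant objects — an $\AA^1$-local weak equivalence, and therefore so is $f$. Alternatively, the whole proof can be packaged as the single observation that for each $X$ an $\AA^1$-fibrant replacement of $X^H$ is computed by applying $(-)^H$ to an $\AA^1$-fibrant replacement of $X$, i.e.\ that $(-)^H$ commutes with $\AA^1$-localization up to weak equivalence, but the diagram chase above is the most economical route. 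Once $(-)^H$ is known to be Quillen on both sides the remaining arguments are purely formal, so the left Quillen claim of the second paragraph is where the real work sits.
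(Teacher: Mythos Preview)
Your proposal is correct and follows essentially the same approach as the paper: both directions rest on $(-)^H$ being simultaneously left and right Quillen for the $\AA^1$-local injective structures, the ``only if'' direction comes from Ken Brown's lemma, and the ``if'' direction is the same two-out-of-three chase against a fibrant replacement followed by Corollary \ref{cor:charofeqlocaleq}. Your write-up is in fact more detailed than the paper's at the one genuinely load-bearing step --- the paper simply asserts ``By Proposition \ref{prop:hirschhorn} the functors $(-)^H$ are left Quillen functors for the $\AA^1$-local injective model structures'' without spelling out the verification that $\LL_{(-)^H}K$ lands in the non-equivariant $\AA^1$-class, whereas you supply exactly that via $(X\times\AA^1)^H\cong X^H\times\AA^1$.
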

\begin{proof}
By Proposition \ref{prop:hirschhorn} the functors $(-)^H$ are left Quillen functors for the $\AA^1$-local injective model structures. Thus, it follows by Ken Brown's Lemma \cite[Lemma 7.7.1]{hirsch} that $(-)^H$ \emph{preserves} $\AA^1$-local weak equivalences.

Conversely, suppose that $f:X\to Y$ in $\sPre(\Gsmk)$ is a map such that for all subgroups $H$ of $G$, the morphism $f^H\in\sPre(\smk)$ is an $\AA^1$-local weak equivalence. Let $r$ be a fibrant replacement functor in the $\AA^1$-local injective structure on $\sPre(\Gsmk)$. Then $(-)^H$ takes the diagram
\begin{equation*}
\begin{xy}
\xymatrix{
X\ar[d]_{\sim_{\AA^1}}\ar[r]^f&Y\ar[d]^{\sim_{\AA^1}}\\
rX\ar[r]^{rf}&rY
}
\end{xy}
\end{equation*}
to the diagram
\begin{equation*}
\begin{xy}
\xymatrix{
X^H\ar[d]_{\sim_{\AA^1}}\ar[r]^{f^H}_{\sim_{\AA^1}}&Y^H\ar[d]^{\sim_{\AA^1}}\\
(rX)^H\ar[r]^{(rf)^H}&(rY)^H
}
\end{xy}
\end{equation*}
where all the arrows decorated with $\sim_{\AA^1}$ are $\AA^1$-local weak equivalences. Hence $(rf)^H$ is an $\AA^1$-local weak equivalence between objects which are $\AA^1$-locally injective fibrant by Lemma \ref{lemma:HA1rightQuillen}. Therefore, $(rf)^H$ is a local weak equivalence for all $H$ and it follows by Corollary \ref{cor:charofeqlocaleq} that $rf$ is a local weak equivalence and so $f$ is an $\AA^1$-local weak equivalence.
\end{proof}

\section{Stable Equivariant Motivic Homotopy Theory}

\subsection{The Stable Model Category}
The definition of representation spheres below already aims towards a stable equivariant homotopy theory. Analogously to the work of Mandell \cite{mandell2004equivariant} in classical topology, and to Hu, Kriz, and Ormsby in \cite{Hu2011homotopy} we consider  spectra with respect to smashing with the regular representation sphere.

\begin{defi}
Let $V\in\Gsmk$ be a representation of $G$. We define the representation sphere $S^V$ to be the quotient
$$
V/(V-0)
$$ 
in $\sPre(\Gsmk)$. For the special case of the regular representation we introduce the notation $$\TT_G:=S^{\AA[G]}.$$
\end{defi}

\begin{rem} A linear algebraic group is called linearly reductive if every rational representation is completely reducible. It is the statement of Maschke's Theorem that a finite group is linearly reductive if the characteristic of $k$ does not divide the group order. A splitting of the representation $V$ causes a splitting of the representation sphere:
$$
S^{V\oplus W} \cong S^V\smash S^W.
$$
Clearly, the reason to invert the regular representation sphere is to invert smashing with all representation spheres and therefore it should be emphasized that the group $G$ has to be linearly reductive for this approach to make sense.

However, there are models for stable homotopy theory based on enriched functors \cite{lydakis, blumberg2005continuous, enrichedfun} instead of sequential spectra. These allow a more flexible stabilization and in a recent preprint \cite{carlsson2011equivariant} Carlsson and Joshua apply this technique to stabilize a slightly different approach to equivariant motivic homotopy theory without being restricted to linearly reductive groups.
\end{rem}

The category $\Nspec(\C,Q)$ of sequential spectra in a model category $\C$ with respect to a left Quillen functor $Q:\C\to\C$ consists of objects
$$
(X_n,\sigma_n)_{n\in\NN},
$$
where the $X_n$'s are objects in $\C$ and $\sigma_n:Q(X_n)\to X_{n+1}$ are morphisms in $\C$, the so-called bonding maps. The morphisms in $\Nspec(\C,Q)$ are given by sequences of morphisms in $\C$ which commute with the respective bonding maps.

There is the usual Yoga of model structures for stable homotopy theory in the sense of spectra in general model categories (cf.~\cite{hovey2001spectra}) that also applies to the equivariant and non-equivariant stable motivic homotopy theory as developed below.  We depict our procedure in the following diagram, where in the top row the relevant categories of equivariant motivic spaces, sequential and symmetric spectra and their standard Quillen adjunctions show up. Below the top row, various model structures appear and are connected by arrows.
\begin{equation}\label{eq:stableyoga}
\begin{xy}
\xymatrix@R=8pt{
\sPre_.(\Gsmk)\ar@<0.5ex>[r]^(0.4){\Sigma^\infty} & \Nspec(\sPre_.(\Gsmk),\TT_G\smash -)\ar@<0.5ex>[l]^(0.6){\Omega^\infty}\\
\text{(1) local injective}\ar[d] & \text{(3) levelwise} \ar[d]\\
\text{(2) } \AA^1\text{-local injective} \ar[ur]& \text{(4) stable} 
}
\end{xy}
\end{equation}
Here, we choose to start with the local injective model structure (1) on pointed simplicial presheaves, in which the cofibrations are given by monomorphisms and weak equivalences are the local weak equivalences after forgetting the basepoint. The vertical arrows mean Bousfield localization, in this case at the class
$$
\left\lbrace X\smash\AA^1_+\to X \mid X\in\Gsmk   \right\rbrace
$$
which gives the $\AA^1$-local injective model structure (2). This model structure can be lifted to a projective levelwise model structure on sequential $\TT_G$-spectra \cite[Lemma 2.1]{MSS} (3), which can be localized at the class of stable equivalences to result in a stable model structure (4). 

Fortunately, compared with Hovey's general setup, we are in the good situation of \cite[Theorem 4.9]{hovey2001spectra} and thus we may proceed as Jardine in \cite{MSS} to define stable weak equivalences.

\begin{lemma}
 The adjunction 
 $$
 \TT_G\smash - :\sPre_.(\Gsmk)\rightleftarrows\sPre_.(\Gsmk):\Omega_{\TT_G}
 $$
 prolongates canonically to an adjunction
  $$
 \Sigma'_{\TT_G}:\Nspec(\sPre_.(\Gsmk),\TT_G\smash-)\rightleftarrows\Nspec(\sPre_.(\Gsmk),\TT_G\smash-):\Omega'_{\TT_G}
 $$
 called \emph{fake suspension adjunction}.
\end{lemma}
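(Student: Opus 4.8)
The plan is to define both functors by applying the space-level adjoint functors levelwise, with the bonding maps adjusted in the evident way, and then to verify the hom-set bijection level by level. For the fake suspension, given a spectrum $X=(X_n,\sigma_n)$ with structure maps $\sigma_n\colon\TT_G\smash X_n\to X_{n+1}$, I will set $(\Sigma'_{\TT_G}X)_n:=\TT_G\smash X_n$ and take as bonding map $\TT_G\smash\sigma_n\colon\TT_G\smash(\TT_G\smash X_n)\to\TT_G\smash X_{n+1}$; no coherence beyond the functoriality of $\TT_G\smash-$ is needed. On morphisms of spectra $\Sigma'_{\TT_G}$ acts levelwise by $\TT_G\smash-$, and compatibility with the new bonding maps is then immediate. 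Dually, writing $\tilde\sigma_n\colon X_n\to\Omega_{\TT_G}X_{n+1}$ for the adjunct of $\sigma_n$, I will set $(\Omega'_{\TT_G}X)_n:=\Omega_{\TT_G}X_n$ with bonding map the adjunct of $\Omega_{\TT_G}(\tilde\sigma_n)\colon\Omega_{\TT_G}X_n\to\Omega_{\TT_G}\Omega_{\TT_G}X_{n+1}$. Both assignments are visibly functorial and are the standard levelwise prolongations, which is what justifies the word ``canonically''.

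Next I will produce a natural bijection $\Hom(\Sigma'_{\TT_G}X,Y)\cong\Hom(X,\Omega'_{\TT_G}Y)$. A morphism $f\colon\Sigma'_{\TT_G}X\to Y$ is a family $f_n\colon\TT_G\smash X_n\to Y_n$ commuting with the bonding maps; applying the space-level adjunction $\TT_G\smash-\dashv\Omega_{\TT_G}$ in each level yields a family $g_n\colon X_n\to\Omega_{\TT_G}Y_n$. I will then check that the square expressing compatibility of $f$ with the bonding maps of $\Sigma'_{\TT_G}X$ and $Y$ is carried, under this levelwise adjunction, exactly onto the square expressing compatibility of $g=(g_n)$ with the bonding maps of $X$ and $\Omega'_{\TT_G}Y$. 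This is a diagram chase with the triangle identities for the unit and counit of $\TT_G\smash-\dashv\Omega_{\TT_G}$ together with the functoriality of $\TT_G\smash-$ and $\Omega_{\TT_G}$, and the resulting correspondence is plainly natural in both variables.

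The main obstacle I anticipate is precisely this last piece of bookkeeping --- matching the two ``commutes with bonding maps'' conditions across the space-level adjunction --- since one has to keep careful track of which smash factor is being acted on when passing $\sigma_n$ back and forth through the adjunction. Everything else is formal. Alternatively, one could simply invoke the general prolongation of an adjunction to categories of sequential spectra in the sense of \cite{hovey2001spectra}, but the direct verification sketched above is short enough to record in full.
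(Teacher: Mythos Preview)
Your proposal is correct and is essentially the same argument as the paper's, only unpacked: the paper records the prolongation by naming the two natural transformations needed --- the identity on $(\TT_G\smash-)^2$ for the left adjoint and the unit/counit composite $\TT_G\smash\Omega_{\TT_G}(-)\to\Omega_{\TT_G}(\TT_G\smash-)$ for the right adjoint --- and then points (in the subsequent remark) to \cite[Corollary 1.6]{hovey2001spectra}, which is exactly the alternative you mention at the end. Your levelwise description of $\Omega'_{\TT_G}$ via the adjunct of $\Omega_{\TT_G}(\tilde\sigma_n)$ is the same bonding map the paper's natural transformation produces, so there is no substantive difference in approach.
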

\begin{proof}
 Use the identity transformation on $(\TT_G\smash-)^2$ to prolongate $\TT_G\smash- $ and compose unit and counit of the adjunction to obtain a natural transformation 
 $$
 \TT_G\smash(\Omega_{\TT_G}(-))\to \Omega_{\TT_G}(\TT_G\smash -))
 $$
 which prolongates $\Omega_{\TT_G}$ to the right adjoint.
\end{proof}

\begin{rem}
 The above lemma is originally \cite[Corollary 1.6]{hovey2001spectra} in the general situation. Note that there is no twisting of the smash factors involved in the bonding maps, which is why the resulting suspension is called fake suspension in contrast to the suspension defined in \eqref{eq:defsuspension}.
\end{rem}

\begin{defi}\label{def:stableweq}
Let $R$ denote a levelwise fibrant replacement functor. A morphism $f\in \Nspec(\sPre_.(\Gsmk),\TT_G\smash-)$ is called a stable equivalence if 
$$
(\Omega'\circ\mathrm{sh})^\infty R(f)
$$
is a levelwise equivalence.  
\end{defi}

For Jardine's machinery to work, we need to assure that the object $\TT_G$ which is used for suspending fulfills a technical property, which then implies a good behavior of the right adjoint to smashing with $\TT_G$.

\begin{lemma}
 The object $\TT_G\in\sPre_.(\Gsmk)$ is compact in the sense of \cite[2.2]{MSS}.
\end{lemma}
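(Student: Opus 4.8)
The plan is to deduce the compactness of $\TT_G$ from the (easy) compactness of representable presheaves, together with the fact that the class of compact pointed simplicial presheaves is closed under finite colimits. Recall from \cite[2.2]{MSS} that $K\in\sPre_.(\Gsmk)$ is \emph{compact} when there is an infinite cardinal $\kappa$ such that $K$ is $\kappa$-bounded (each $K_n(U)$ has cardinality $<\kappa$) and such that for every $\kappa$-filtered system $\{X_s\}$ of monomorphisms in $\sPre_.(\Gsmk)$ the canonical map
$$
\colim_s \Map(K,X_s)\ \longrightarrow\ \Map\bigl(K,\colim_s X_s\bigr)
$$
is an isomorphism of simplicial sets.

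First I would record that a representable pointed presheaf $\widetilde U_+$, with $U\in\Gsmk$, is compact: by the Yoneda lemma $\Map(\widetilde U_+,X)\cong X(U)$ naturally in $X$, and colimits of simplicial presheaves are formed degreewise and sectionwise, so the displayed map is an isomorphism for \emph{any} diagram, in particular for $\kappa$-filtered ones; moreover $\widetilde U_+$ is $\kappa$-bounded as soon as $\kappa>\sup_{U'}|\Hom_{\Gsmk}(U',U)|$, the supremum taken over a small skeleton of $\Gsmk$. Second, if $K=\colim_{i\in I}K_i$ is a finite colimit of compact objects, I would pick a single infinite cardinal $\kappa$ bounding all of the finitely many $K_i$; then $K$ is again $\kappa$-bounded, since a finite colimit of sets of cardinality $<\kappa$ still has cardinality $<\kappa$, and since $\Map(K,-)\cong\lim_{i\in I^{\op}}\Map(K_i,-)$ is a \emph{finite} limit, which commutes with $\kappa$-filtered colimits of simplicial sets, the required isomorphism for $K$ follows from the ones for the $K_i$. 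Hence finite colimits of compact presheaves are compact.

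Finally, $\TT_G=\AA[G]/(\AA[G]\setminus 0)$ is by definition the cofiber in $\sPre_.(\Gsmk)$ of the inclusion $(\AA[G]\setminus 0)_+\hookrightarrow(\AA[G])_+$, that is, the pushout of $\ast\leftarrow(\AA[G]\setminus 0)_+\to(\AA[G])_+$, where $\AA[G]$ is the affine space underlying the regular representation and $\AA[G]\setminus 0$ the open complement of its ($G$-fixed) origin. Both $\AA[G]$ and $\AA[G]\setminus 0$ are smooth separated $k$-schemes carrying the coordinate-permutation action of $G$, hence objects of $\Gsmk$, and the zero object $\ast$ is trivially compact; thus $\TT_G$ is a finite colimit of compact presheaves and therefore compact. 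The argument is essentially formal and I do not expect a genuine obstacle: the only things to be careful about are the choice of the single bounding cardinal $\kappa$ and the (routine) check that $\AA[G]\setminus 0$ really lies in $\Gsmk$; and should the second clause of \cite[2.2]{MSS} be phrased homotopically rather than as a strict isomorphism, one simply observes that the maps produced above are already isomorphisms, hence a fortiori weak equivalences, so the same reduction applies verbatim.
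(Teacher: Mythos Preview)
Your argument addresses only part of what Jardine actually requires in \cite[2.2]{MSS}. The notion of ``compact'' there is not the purely categorical $\kappa$-compactness you have written down: it consists of several clauses on the \emph{internal} function complex $\underline{\Hom}_*(K,-)$, including that this functor preserve certain fibrations and fibrant objects in the (motivic/flasque) model structure, not merely that the external mapping space commute with $\kappa$-filtered colimits of monomorphisms. Your closure-under-finite-colimits argument handles the cardinality/colimit clause cleanly, but it does not touch the homotopical clauses. Moreover, those clauses are \emph{not} automatic for arbitrary finite colimits of compacts: from a pushout $K=K_1\cup_{K_0}K_2$ one gets $\underline{\Hom}_*(K,-)\cong\underline{\Hom}_*(K_1,-)\times_{\underline{\Hom}_*(K_0,-)}\underline{\Hom}_*(K_2,-)$, and a pullback of fibrant objects along arbitrary maps need not be fibrant; one needs one of the legs $K_0\to K_i$ to be a cofibration in the relevant (flasque) sense so that the induced map on internal homs is a fibration. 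Your final hedge (``if the clause is homotopical, our maps are already isomorphisms'') does not help here, because the missing conditions concern preservation of \emph{fibrations}, not just of equivalences.

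This is exactly the point the paper isolates. Jardine's proof of \cite[Lemma~2.2]{MSS} for $T=\AA^1/(\AA^1\setminus 0)$ runs entirely inside the flasque model structure on simplicial presheaves over an arbitrary site, and the only site-specific input is that an inclusion of schemes represents a monomorphism of presheaves---hence a flasque cofibration---so that $\underline{\Hom}_*(U/V,-)$ is a pullback along a fibration. The paper simply observes that the equivariant inclusion $(\AA[G]\setminus 0)\hookrightarrow\AA[G]$ is likewise a monomorphism in $\Gsmk$, whence Jardine's argument transfers verbatim. To repair your approach you would need to supply precisely this flasque-cofibration step, at which point you are essentially reproducing the paper's proof.
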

\begin{proof} The analog statement about the presheaf quotient $\AA^1/(\AA^1\setminus0)$ in Jardine's work is \cite[Lemma 2.2]{MSS}. All the arguments in the proof are statements about the flasque model structure on simplicial presheaves on a general site \cite{flasque}. The only thing used about about schemes is that an inclusion of schemes gives a monomorphism of the represented presheaves, which is true for an inclusion of equivariant schemes like $(\AA[G]\setminus0)\hookrightarrow\AA[G]$ as well.
\end{proof}

\begin{theorem}\label{thm:stableeqms}
 Let $T$ be a compact object in $\sPre_.(\Gsmk)$. There is a proper simplicial model structure on the associated category $\Nspec(\sPre_.(\Gsmk),T\smash -)$ of $T$-spectra with stable weak equivalences and stable fibrations.
\end{theorem}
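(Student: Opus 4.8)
The plan is to obtain this as an instance of the stabilization machinery of Jardine \cite{MSS}, itself a motivic incarnation of Hovey's general framework \cite{hovey2001spectra}: I would verify that the hypotheses of that machinery hold in the present situation and then invoke it essentially verbatim. The input it asks for on the level category is a left proper, simplicial, cofibrantly generated model structure whose weak equivalences are the $\AA^1$-local weak equivalences and in which the chosen suspension object behaves well. The $\AA^1$-local injective structure of \eqref{eq:stableyoga} has all of these properties, being a left Bousfield localization of the (cofibrantly generated, proper, simplicial) injective structure; and when a cellular model is needed it can be replaced by the Quillen-equivalent flasque variant \cite{flasque} already used in the proof of the compactness lemma above, which fits the hypotheses of \cite[Theorem 4.9]{hovey2001spectra}. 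The object $T$ is compact in the sense of \cite[2.2]{MSS} by assumption.

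Granting this, I would proceed in three stages. First, lift the level structure to the projective levelwise model structure on $\Nspec(\sPre_.(\Gsmk), T\smash -)$ as in \cite[Lemma 2.1]{MSS}: weak equivalences and fibrations are defined levelwise, the structure is again proper, simplicial and cofibrantly generated, and its generating (trivial) cofibrations are the maps $F_n(I)$ and $F_n(J)$ obtained by applying the shift-by-$n$ free-spectrum functors to generators $I$, $J$ of the level structure. Second, I would form the stabilization endofunctor $Q := (\Omega'\circ\mathrm{sh})^\infty$, the filtered colimit of the iterates of the fake loop--shift functor $\Omega'\circ\mathrm{sh}$, precomposed with a levelwise fibrant replacement $R$, so that — as in Definition \ref{def:stableweq} — a morphism $f$ is by definition a stable equivalence precisely when $QR(f)$ is a levelwise equivalence.

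Third, and this is where the genuine content lies, I would run the Bousfield--Friedlander localization argument for the pair consisting of the levelwise cofibrations and the stable equivalences, following \cite[\S 2]{MSS} (equivalently \cite[\S\S 3--4]{hovey2001spectra}) step by step. The points to establish are: every levelwise equivalence is a stable equivalence, and the stable equivalences satisfy two-out-of-three and are closed under retracts; compactness of $T$ forces $Q$ to commute with filtered colimits up to levelwise equivalence, so that stable equivalences are closed under transfinite composition and filtered colimits and a set of generating stable trivial cofibrations can be produced by the small object argument; and a levelwise fibration with stably fibrant target all of whose fibres are stably trivial is itself a stable equivalence, which identifies the stable fibrations between stably fibrant objects with the levelwise fibrations and yields the two stable factorizations. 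The compactness of $T$ enters exactly here, guaranteeing that $\Omega'$ and $Q$ are homotopically well-behaved and compatible with the colimits in the small object argument; this is the step I expect to be the main obstacle, being the familiar subtlety of the Bousfield--Friedlander technique — though its resolution needs nothing beyond the compactness of $T$ and the properness of the level structure, both already in hand, so the argument carries over word for word.

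Finally, I would record properness and the simplicial structure. Left properness follows from left properness of the levelwise structure together with the fact that levelwise cofibrations are monomorphisms; right properness uses compactness of $T$ once more, via the fact that $Q$ carries a pullback along a levelwise fibration to a homotopy pullback; and the simplicial structure is the one induced levelwise from $\sPre_.(\Gsmk)$, with the pushout--product axiom for the stable structure inherited from that of the levelwise structure by the standard localization argument.
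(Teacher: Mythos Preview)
Your proposal is correct and follows exactly the approach the paper takes: the paper's proof consists of the single sentence ``This works as in \cite[Theorem 2.9]{MSS}'', and what you have written is a faithful unpacking of how Jardine's argument runs and why its hypotheses are met here. Nothing further is needed.
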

\begin{proof}
 This works as in \cite[Theorem 2.9]{MSS}.
\end{proof}

\begin{defi}
 Let $X$ in $\Nspec(\sPre_.(\Gsmk),\TT_G\smash-)$. We define the suspension $\Sigma_{\TT_G}X$ by $\Sigma_{\TT_G}X_n=\TT_G\smash X_n$ with bonding maps
 $$
 \sigma_{\Sigma X}:\TT_G\smash \TT_G\smash X_n\xrightarrow{\tau\smash\id_{X_n}}\TT_G\smash \TT_G\smash X_n\xrightarrow{\sigma_X}\TT_G\smash X_{n+1}
 $$
 where $\tau:\TT_G\smash \TT_G\to\TT_G\smash \TT_G$ denotes the twist of the two smash factors. The right adjoint to $\Sigma_{\TT_G}$ is also levelwise given by the internal hom $\Omega_{\TT_G}$, i.e.~$\Omega_{\TT_G}(X)_n=\Omega_{\TT_G}(X_n)$ with bonding maps adjoint to 
 $$
 X_n\smash \TT_G\xrightarrow{\tau}\TT_G\smash X_n\xrightarrow{\sigma_X}X_{n+1}.
 $$
 Together these two functors give the suspension adjunction
\begin{equation}\label{eq:defsuspension}
\Sigma_{\TT_G}:\Nspec(\sPre_.(\Gsmk),\TT_G\smash-)\rightleftarrows\Nspec(\sPre_.(\Gsmk),\TT_G\smash-):\Omega_{\TT_G}.
\end{equation}
\end{defi}

To be able to untwist the levelwise smashing inside the definition of the functor $\TT_G\smash -$ an important condition appears to be the symmetry of $\TT_G$.

\begin{lemma}
 There is an $\AA^1$-homotopy in $\psPre(\Gsmk)$ between the cyclic permutation of the smash factors
 $$
 \TT_G\smash\TT_G\smash\TT_G\to\TT_G\smash\TT_G\smash\TT_G
 $$
 and the identity.
\end{lemma}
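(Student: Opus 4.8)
The plan is to reduce the assertion to a statement about linear automorphisms of the representation $\AA[G]^{\oplus 3}$ and then to write down an honest $\AA^1$-homotopy through such automorphisms. First I would invoke the splitting $S^{V\oplus W}\cong S^V\smash S^W$ recorded above --- in fact an equality of presheaves, since $(V\setminus 0)\times W\cup V\times(W\setminus 0)$ is the same open subscheme of $V\times W$ as $(V\times W)\setminus 0$ --- to identify $\TT_G\smash\TT_G\smash\TT_G$ with $S^{\AA[G]^{\oplus 3}}$. Under this identification the cyclic permutation of the smash factors is the morphism $S^{c}$ induced by the $G$-equivariant linear automorphism $c\colon\AA[G]^{\oplus 3}\to\AA[G]^{\oplus 3}$, $(v_1,v_2,v_3)\mapsto(v_3,v_1,v_2)$, that cyclically permutes the three summands.

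The crucial observation is that $c$ lies in the subgroup of $\mathrm{Aut}_G(\AA[G]^{\oplus 3})$ generated by $G$-equivariant transvections. Since $c$ respects the block decomposition it corresponds to the permutation matrix $P_{(1\,2\,3)}\in GL_{3}(R)$ with $R=\mathrm{End}_G(\AA[G])$, and $(1\,2\,3)$ is an \emph{even} permutation: using the explicit factorisation $P_{(1\,2\,3)}=\rho_{1}\rho_{2}$ with each $\rho_i$ a $2\times 2$ rotation block (hence a product of three elementary matrices already over $\ZZ$), one writes $P_{(1\,2\,3)}=e_{i_1j_1}(r_1)\cdots e_{i_mj_m}(r_m)$ as a product of elementaries $e_{ij}(r)=I+rE_{ij}$ with $i\neq j$. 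Each such $e_{ij}(r)$ corresponds to a transvection $\id+N_k$ of $\AA[G]^{\oplus 3}$ with $N_k^{2}=0$ and $N_k$ $G$-equivariant. This is precisely the place where it matters that one permutes \emph{three} factors rather than two: a transposition is odd, it carries the nontrivial element of $K_1$, and the associated map of representation spheres is only $\AA^1$-homotopic to $\langle -1\rangle$.

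With $c=\prod_{k=1}^{m}(\id+N_k)$ fixed, I would form the morphism of $G$-schemes $\Gamma\colon\AA^{1}\to\mathrm{Aut}_G(\AA[G]^{\oplus 3})$ given by $\Gamma(t)=\prod_{k=1}^{m}(\id+tN_k)$; each factor $\id+tN_k$ has inverse $\id-tN_k$ and is $G$-equivariant, so $\Gamma(t)$ is a $G$-equivariant automorphism for every $t$, with $\Gamma(0)=\id$ and $\Gamma(1)=c$. The associated morphism $H\colon\AA^{1}\times\AA[G]^{\oplus 3}\to\AA[G]^{\oplus 3}$, $(t,v)\mapsto\Gamma(t)(v)$, is $G$-equivariant for the trivial action on $\AA^{1}$, sends the zero section to the zero section and $\AA^{1}\times(\AA[G]^{\oplus 3}\setminus 0)$ into $\AA[G]^{\oplus 3}\setminus 0$. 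Passing to represented presheaves it therefore descends to a pointed map $\AA^{1}_{+}\smash S^{\AA[G]^{\oplus 3}}\to S^{\AA[G]^{\oplus 3}}$ which restricts to $\id$ at $t=0$ and to $S^{c}$ at $t=1$; transporting through the identification of the first step yields the required homotopy.

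The whole argument is the equivariant analogue of Voevodsky's computation for $(\PP^{1})^{\smash 3}$, and the only genuinely new point is to keep all choices $G$-equivariant, which is free of charge since the permutation commutes with the diagonal $G$-action and the transvections are assembled from $G$-equivariant block entries. I expect the main obstacle to be purely formal: checking that $H$ passes to the presheaf quotient $\AA[G]^{\oplus 3}/(\AA[G]^{\oplus 3}\setminus 0)$ as a pointed, $\AA^{1}$-parametrised map (the half-smash manipulation $\AA^{1}_{+}\smash(X/A)\cong(\AA^{1}\times X)/(\AA^{1}\times A)$), and carefully matching the two ways of cyclically permuting --- coordinates versus smash factors --- under the iterated splitting isomorphism.
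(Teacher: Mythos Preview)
Your argument is correct and is precisely the equivariant adaptation of Voevodsky's computation that underlies the cited reference. The paper does not prove the lemma directly: it simply invokes \cite[Lemma 2]{Hu2011homotopy} and observes that the statement is insensitive to which topology one puts on $\Gsmk$, since it concerns an explicit $\AA^1$-homotopy of representable presheaves. What you have written is essentially the content of that cited lemma, spelled out in full: identify $\TT_G^{\smash 3}\cong S^{\AA[G]^{\oplus 3}}$, note that the cyclic permutation is induced by an element of $SL_3(\mathrm{End}_G(\AA[G]))$ because $(1\,2\,3)$ is even, factor it into $G$-equivariant elementary matrices, and interpolate each factor linearly. So there is no genuine difference in approach---you have simply unpacked the reference rather than citing it, and correctly flagged the one point that requires care (matching the permutation of smash factors with the coordinate permutation under the iterated splitting).
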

\begin{proof}
This is \cite[Lemma 2]{Hu2011homotopy} for the $\AA^1$-local model structure with respect to the equivariant Nisnevich topology, but the topology on $\Gsmk$ does not matter for this statement to hold.
\end{proof}

A consequence, which is also true in the more general situation of Hovey's \cite[Theorem 9.3]{hovey2001spectra}, is that smashing with $\TT_G$ is invertible in the stable model.

\begin{theorem}
The suspension adjunction \eqref{eq:defsuspension} is a Quillen equivalence with respect to the stable model structure.
\end{theorem}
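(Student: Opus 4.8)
The plan is to invoke Hovey's general stabilization machinery. The statement is precisely the motivic-equivariant instance of \cite[Theorem 9.3]{hovey2001spectra}, which asserts that for a left proper cellular (here: proper simplicial) model category $\C$ together with a left Quillen endofunctor $T\smash-$, the suspension adjunction on the stable model structure on $\Nspec(\C,T\smash-)$ is a Quillen equivalence \emph{provided} the defining object $T$ is symmetric, i.e.~the cyclic permutation on $T\smash T\smash T$ is (stably, hence it suffices: $\AA^1$-locally) homotopic to the identity. All three hypotheses have already been checked in the excerpt: properness and simpliciality of the stable model structure on $\Nspec(\sPre_.(\Gsmk),\TT_G\smash-)$ is Theorem \ref{thm:stableeqms} together with the compactness of $\TT_G$; and the symmetry hypothesis on $\TT_G$ is exactly the content of the last lemma above (the cyclic permutation of $\TT_G\smash\TT_G\smash\TT_G$ is $\AA^1$-homotopic to the identity). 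So the proof is a one-paragraph citation: apply \cite[Theorem 9.3]{hovey2001spectra} to $\C=\sPre_.(\Gsmk)$ with the $\AA^1$-local injective model structure and $T=\TT_G$.

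The one point that requires a remark, rather than a bare citation, is the bookkeeping around \emph{which} suspension adjunction is meant. Earlier the excerpt distinguished the \emph{fake} suspension $\Sigma'_{\TT_G}$ (untwisted bonding maps) from the genuine suspension $\Sigma_{\TT_G}$ of \eqref{eq:defsuspension} (with the twist $\tau$ inserted). Hovey's theorem is stated for the genuine suspension, and the statement here refers to \eqref{eq:defsuspension}, so these match; but I would spell out that it is the twist in the bonding map of $\Sigma_{\TT_G}X$ that makes the argument go through, and that this is why the symmetry lemma on $\TT_G\smash\TT_G\smash\TT_G$ is the relevant input. I would also note, as the excerpt's own framing suggests (``$\TT_G$ is invertible in the stable model''), that the Quillen equivalence is what encodes invertibility of $\smash\TT_G$ on the stable homotopy category.

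The main obstacle — and it is really the only place where one must do more than cite — is verifying that our construction of stable equivalences via Jardine's $(\Omega'\circ\mathrm{sh})^\infty R(-)$ machinery (Definition \ref{def:stableweq}) produces the \emph{same} stable model structure that Hovey's Theorem 9.3 is about. This is precisely the ``good situation of \cite[Theorem 4.9]{hovey2001spectra}'' flagged in the text after diagram \eqref{eq:stableyoga}: compactness of $\TT_G$ guarantees that Jardine's explicit description of stable fibrant objects and stable equivalences agrees with Hovey's abstract Bousfield localization, so that Theorems \ref{thm:stableeqms} and \cite[Theorem 9.3]{hovey2001spectra} are statements about one and the same model category. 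I would dispatch this by citing \cite[Theorem 4.9]{hovey2001spectra} (exactly as \cite{MSS} does in the non-equivariant case) and observing that every hypothesis used there — left properness, the compactness of the suspension coordinate, and the cofibrant generation inherited from the $\AA^1$-local injective structure — has been established above for $\sPre_.(\Gsmk)$ and $\TT_G$ with no change from Jardine's arguments, since those arguments only use that an inclusion of (equivariant) schemes induces a monomorphism of representable presheaves.

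\begin{proof}
We are in the situation of Hovey's general stabilization theorem. By Theorem \ref{thm:stableeqms} and the compactness of $\TT_G$, the category $\Nspec(\sPre_.(\Gsmk),\TT_G\smash-)$ carries a proper simplicial stable model structure, and by the discussion following \eqref{eq:stableyoga} — in particular the fact that we are in the good situation of \cite[Theorem 4.9]{hovey2001spectra} — this is the stable model structure obtained as the left Bousfield localization of the levelwise $\AA^1$-local injective structure in the sense of \cite[Section 3]{hovey2001spectra}. The defining object $\TT_G$ is symmetric: the cyclic permutation of the three smash factors of $\TT_G\smash\TT_G\smash\TT_G$ is $\AA^1$-homotopic to the identity by the lemma above. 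Hence \cite[Theorem 9.3]{hovey2001spectra}, applied to $\C=\sPre_.(\Gsmk)$ with the $\AA^1$-local injective model structure and $T=\TT_G$, shows that the suspension adjunction \eqref{eq:defsuspension}, whose bonding maps carry the twist $\tau$ as required, is a Quillen equivalence for the stable model structure. In particular, smashing with $\TT_G$ becomes invertible on the stable homotopy category.
\end{proof}
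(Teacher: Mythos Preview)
Your proposal is correct, but it takes a different route from the paper's own proof. You invoke Hovey's general Theorem 9.3 as a black box, after assembling its hypotheses (compactness and symmetry of $\TT_G$, properness of the stable model structure, agreement of Jardine's and Hovey's stabilizations via \cite[Theorem 4.9]{hovey2001spectra}). The paper instead gives a short direct verification of the Quillen equivalence criterion: for stably fibrant $Y$ and any $f:\TT_G\smash X\to Y$, it uses \cite[Corollary 3.16]{MSS} (the evaluation $ev:\TT_G\smash\Omega_{\TT_G}Y\to Y$ is a stable equivalence) together with \cite[Corollary 3.18]{MSS} ($\TT_G\smash-$ reflects stable equivalences) to conclude that $f$ is a stable equivalence iff its adjoint $f^\sharp$ is. The paper does acknowledge, in the sentence immediately preceding the theorem, that Hovey's \cite[Theorem 9.3]{hovey2001spectra} yields the same conclusion in greater generality --- so your approach is exactly the one the author flags but then bypasses. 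What the paper's argument buys is that it stays entirely within Jardine's framework \cite{MSS} already in use, avoiding the need to reconcile the two stabilization formalisms; what your approach buys is that the role of the symmetry lemma is made completely transparent, whereas in the paper's proof that lemma is used only implicitly (it is what underlies \cite[Corollaries 3.16, 3.18]{MSS}).
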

\begin{proof}
 Let $Y$ be fibrant and $f:\TT_G\smash X\to Y$ in $\Nspec(\sPre_.(\Gsmk), \TT_G\smash -)$. By \cite[Corollary 3.16]{MSS}
 $$
 ev:\TT_G\smash\Omega_{\TT_G}Y\to Y
 $$
 is a stable equivalence, so we may deduce from the commutative diagram
 \begin{equation*}   
\begin{xy}
\xymatrix{
&\TT_G\smash\Omega_{\TT_G}Y\ar[d]^{ev}_\sim\\
\TT_G\smash X\ar[ur]^{\TT f^\sharp}\ar[r]^f&Y
}
\end{xy}
\end{equation*}
that $f$ is a stable equivalence if and only if $\TT f^\sharp$ is a stable equivalence, which is by \cite[Corollary 3.18]{MSS} if and only if the adjoint morphism $f^\sharp$ is a stable equivalence.
\end{proof}

\begin{proposition}\label{prop:stabilizationcontrol}  Let $V$ be a representation of $G$. Then the adjunction
$$
-\smash S^V:\Nspec(\sPre_.(\Gsmk),\TT_G\smash-) \rightleftarrows\Nspec(\sPre_.(\Gsmk),\TT_G\smash-):\Omega^V
$$
is a Quillen equivalence.
\end{proposition}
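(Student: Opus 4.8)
The plan is to reduce the statement to the already-established fact that smashing with the regular representation sphere $\TT_G=S^{\AA[G]}$ is invertible in the stable model structure, and then to leverage the splitting of representation spheres under direct sums together with a standard cofinality/telescope argument. First I would recall that since $G$ is linearly reductive (Maschke), the representation $V$ is a direct summand of some multiple of the regular representation: there is a representation $W$ and an integer $n\geq 0$ with $V\oplus W\cong\AA[G]^{\oplus n}$. By the splitting of representation spheres noted in the remarks, this gives $S^V\smash S^W\cong S^{\AA[G]^{\oplus n}}\cong\TT_G^{\smash n}$ in $\sPre_.(\Gsmk)$, hence a corresponding identification of the levelwise functors $(-\smash S^V)\circ(-\smash S^W)\cong(-\smash\TT_G)^{\smash n}$ on $\Nspec(\sPre_.(\Gsmk),\TT_G\smash-)$, with right adjoint $\Omega^W\circ\Omega^V\cong(\Omega_{\TT_G})^n$.

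Next I would invoke the two-out-of-three principle for Quillen equivalences. We have just seen (the previous theorem) that $\Sigma_{\TT_G}\dashv\Omega_{\TT_G}$ is a Quillen equivalence on the stable model structure, hence so is its $n$-fold composite $(\Sigma_{\TT_G})^n\dashv(\Omega_{\TT_G})^n$. Up to the coherent isomorphism of the previous paragraph this composite factors as $(-\smash S^V)$ followed by $(-\smash S^W)$, both of which are left Quillen functors for the stable model structure — here one uses that smashing with a cofibrant pointed presheaf preserves levelwise cofibrations and levelwise $\AA^1$-local equivalences, and preserves stable equivalences by the usual argument (for instance via Ken Brown and the compactness of $\TT_G$, exactly as for $\Sigma_{\TT_G}$). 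Since the composite $(-\smash S^W)\circ(-\smash S^V)$ is a Quillen equivalence and, symmetrically, $(-\smash S^V)\circ(-\smash S^W)$ is also a Quillen equivalence (run the same argument with the roles of $V$ and $W$ exchanged, using $W\oplus V\cong\AA[G]^{\oplus n}$ as well), the standard two-out-of-three statement for Quillen equivalences forces each of $-\smash S^V$ and $-\smash S^W$ to be a Quillen equivalence. That is precisely the claim for $V$.

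The one point requiring care — and the step I expect to be the main obstacle — is verifying that $-\smash S^W$ (equivalently $-\smash S^V$ for a general, not necessarily regular, $V$) is genuinely a \emph{left Quillen functor} for the stable structure, i.e.~that it preserves stable equivalences and not merely levelwise ones. For $\TT_G$ this is built into Jardine's machinery via the compactness of $\TT_G$; for a general $S^V$ one should check that $S^V$ is likewise cofibrant and that the relevant compactness/finiteness input still applies, so that the characterization of stable equivalences in Definition \ref{def:stableweq} is compatible with $-\smash S^V$. Alternatively, and perhaps more cleanly, one avoids this by noting that $-\smash S^V$ already preserves stable equivalences \emph{because} its composite with $-\smash S^W$ does (being $(\Sigma_{\TT_G})^n$) and $-\smash S^W$ reflects them up to the same composite trick; phrased at the level of total derived functors on the homotopy category, $\LL(-\smash S^V)$ and $\LL(-\smash S^W)$ are mutually inverse equivalences since their composites in both orders are the $n$-fold iterate of the invertible functor $\LL\Sigma_{\TT_G}$. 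Packaging this homotopy-category statement back into the assertion that the Quillen pair $-\smash S^V\dashv\Omega^V$ is a Quillen equivalence is then formal.
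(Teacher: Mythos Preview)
Your proposal is correct and follows essentially the same route as the paper: choose a complementary representation $W$ with $V\oplus W\cong n\AA[G]$, use the splitting $S^V\smash S^W\cong\TT_G^{\smash n}$, and deduce invertibility of $-\smash S^V$ from the already established invertibility of $\Sigma_{\TT_G}$. The paper's proof is a terse three lines saying exactly this (phrased as ``$\Omega^{n\TT_G}\circ S^W$ is `Quillen inverse' to $S^V$''), while you unpack the two-out-of-three/derived-functor reasoning more explicitly; you also correctly flag that the one nontrivial ingredient is knowing $-\smash S^V$ is left Quillen for the \emph{stable} structure, which the paper simply asserts without further comment.
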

\begin{proof}
 Smashing with $S^V$ is a left Quillen functor. There exists a representation $W$ such that $V\oplus W \cong n\AA^G$ is a $n$-fold sum of the regular representation. By using the theorem above one can show that $\Omega^{n\TT_G}\circ S^W$ is 'Quillen inverse' to $S^V$.
\end{proof}

In Definition \ref{def:stableweq} a morphism $f:X\to Y$ of equivariant spectra was defined to be a stable equivalence if $\colim_i (\Omega'\circ\mathrm{sh})^i R(f)$ is a levelwise equivalence of equivariant spectra. Equivalently, for all $m,n\in \NN$ and all $H\leq G$ the induced maps of all sectionwise $n$-th homotopy groups in level $m$ of the $H$-fixed points are isomorphisms, i.e.
\begin{equation}\label{eq:leadstostablepi}
f_*:\colim_i [G/H\smash S^n\smash \TT_G^i,X_{m+i}|_U]\to \colim_i [G/H\smash S^n\smash \TT_G^i,Y_{m+i}|_U] 
\end{equation}
is an isomorphism of groups for all $U\in \smk$.

The standard simplicial enrichment of local homotopy theory on $\sPre(\C)$ gives us another splitting of $\TT_G$.
\begin{lemma}\label{lemma:TT_Gsplitting1}
 There is an isomorphism $\TT_G\cong S^1\smash (\AA[G]-0)$ in the unstable equivariant homotopy category.
\end{lemma}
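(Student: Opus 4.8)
The plan is to run, $G$-equivariantly, the computation by which Morel and Voevodsky identify $\AA^n/(\AA^n\setminus 0)$ with $S^1\wedge(\AA^n\setminus 0)$ in the non-equivariant motivic homotopy category. The geometric fact that makes this work equivariantly is that the total space $\AA[G]$ is \emph{equivariantly} $\AA^1$-contractible: scalar multiplication $\mu\colon\AA^1\times\AA[G]\to\AA[G]$, $(t,v)\mapsto t\cdot v$, is $G$-equivariant because the $G$-action on $\AA[G]$ is linear, and $\mu$ restricts to $\id_{\AA[G]}$ at $t=1$ and to the constant map $0$ at $t=0$, so it is an $\AA^1$-homotopy from the identity to the zero map. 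Consequently the structure map $\AA[G]\to\ast$ is an $\AA^1$-local weak equivalence in $\sPre_\cdot(\Gsmk)$.

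I would then argue as follows. The inclusion $j\colon\AA[G]\setminus 0\hookrightarrow\AA[G]$ is a monomorphism, hence a cofibration for the local injective (and the $\AA^1$-local injective) model structure, so the pointed quotient $\TT_G=\AA[G]/(\AA[G]\setminus 0)$ represents the homotopy cofiber of $j$; equivalently, $\TT_G$ is the homotopy pushout of $\ast\leftarrow(\AA[G]\setminus 0)\xrightarrow{j}\AA[G]$. Using the contraction above to replace the copy of $\AA[G]$ on the right by $\ast$ yields an $\AA^1$-local weak equivalence from $\TT_G$ to the homotopy pushout of $\ast\leftarrow(\AA[G]\setminus 0)\to\ast$, which is by definition the (simplicial) unreduced suspension of $\AA[G]\setminus 0$.

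It remains to pass from the unreduced to the reduced suspension $S^1\wedge(\AA[G]\setminus 0)$. For this I would use that $\AA[G]\setminus 0$ carries a canonical $G$-fixed $k$-rational base point, namely the ``all-ones'' vector $v_0=\sum_{g\in G}g\in\AA[G]$, which is $G$-fixed because $G$ merely permutes the coordinate basis, and is visibly nonzero; moreover $\ast\to\AA[G]\setminus 0$ is then a monomorphism of presheaves, so $(\AA[G]\setminus 0,v_0)$ is well-pointed. Collapsing the contractible ``meridian'' arc through $v_0$ inside the unreduced suspension is then a cofibration with contractible domain, hence a weak equivalence onto $S^1\wedge(\AA[G]\setminus 0)$. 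Concatenating the equivalences gives the asserted isomorphism in the unstable equivariant homotopy category.

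The step I expect to require the most care is this last comparison of the unreduced and reduced suspensions carried out honestly inside $\sPre_\cdot(\Gsmk)$, together with isolating a genuine $G$-fixed base point on $\AA[G]\setminus 0$ and verifying well-pointedness. If one prefers to avoid it, Proposition \ref{prop:charofA1localequivariantweq} provides an alternative: by Remark \ref{rem:fixedisright} the fixed-point functors $(-)^H$ are simultaneously left and right adjoints, so they commute with the pointed quotient, with smashing by the simplicial $S^1$, and with passing to $\AA[G]\setminus 0$; since $(\AA[G])^H\cong\AA^{|G/H|}$ (the $H$-fixed subspace of the regular representation restricted to $H$), one gets $(\TT_G)^H\cong\AA^{|G/H|}/(\AA^{|G/H|}\setminus 0)$ and $\bigl(S^1\wedge(\AA[G]\setminus 0)\bigr)^H\cong S^1\wedge(\AA^{|G/H|}\setminus 0)$. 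One then builds the equivariant comparison map (e.g.\ from the Puppe sequence of $j$ and the contractibility of $\AA[G]$) and checks that it is an $\AA^1$-local weak equivalence by applying Proposition \ref{prop:charofA1localequivariantweq} and invoking the known non-equivariant identity on each fixed-point level; the remaining work there is identifying $(\AA[G])^H$ with $\AA^{|G/H|}$ and checking the compatibility of the comparison map with these identifications.
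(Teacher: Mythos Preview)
Your argument is correct and follows the same strategy as the paper: both identify $\TT_G$ as the homotopy cofiber of the inclusion $\AA[G]\setminus 0\hookrightarrow\AA[G]$ and then use the equivariant $\AA^1$-contractibility of $\AA[G]$ to collapse the target to a point. The paper implements this via an explicit zig-zag through a mapping-cylinder object $P$ and works in the pointed category throughout, so that the relevant pushout directly produces the reduced suspension $S^1\wedge(\AA[G]\setminus 0)$; your detour through the unreduced suspension and the well-pointedness check is therefore unnecessary if you stay pointed from the start. One small contrast worth noting: you justify the contractibility of $\AA[G]$ by the direct scalar-multiplication homotopy, which is the more elementary route, whereas the paper instead appeals to Proposition~\ref{prop:charofA1localequivariantweq} and checks it on fixed points.
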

\begin{proof} Recall that $\TT_G\cong \AA[G]/(\AA[G]-0)$ where $\AA[G]$ is pointed by $1$ and consider the diagram 
 \begin{equation*}
\begin{xy}
\xymatrix{
\partial\Delta[1]\smash(\AA[G]-0)\ar@{(->}[r]_{}\ar@{(->}[d]&\AA[G]\ar[d]\ar@{->}[r]^{\sim}&\ast\ar[d]\\
\Delta[1]\smash (\AA[G]-0)\ar[r]\ar[d]_{\sim}&P\ar[d]\ar[r]&S^1\smash (\AA[G]-0)\\
\ast\ar[r]&\TT_G
}
\end{xy}
\end{equation*}
consisting of push out squares. The two morphisms decorated with a tilde are $\AA^1$-local weak equivalences. The vertical one being 
$$
\Delta[1]\smash(\AA[G]-0)\xrightarrow{p\smash \id}\Delta[0]\smash (\AA[G]-0)=\ast 
$$
and the horizontal one by Proposition \ref{prop:charofA1localequivariantweq}. Further, both morphisms to the push out $P$ are cofibrations and hence by left properness there is a zig-zag
$$
\TT_G\xleftarrow{\sim}P\xrightarrow{\sim}S^1\smash(\AA[G]-0)
$$
of weak equivalences.
\end{proof}

Continuing from \eqref{eq:leadstostablepi} we compute that $f$ is a stable equivalence if and only if the induced map
$$
\colim_i [G/H\smash S^{n+i}\smash (\AA[G]-0)^i,X_{m+i}|_U]\to \colim_i [G/H\smash S^{n+i}\smash (\AA[G]-0)^i,Y_{m+i}|_U] 
$$
is an isomorphism. This leads naturally to the following definition.

\begin{defi}
 Let $X$ in $\Nspec(\sPre_.(\Gsmk),\TT_G\smash -)$. The \emph{weighted stable homotopy groups} $\pi^H_{s,t}X$ are defined to be the presheaf of groups on $\smk$ given by
 $$
 \pi_{s,t}^H(X)(U)=\colim_{i\geq0} [G/H\smash S^{s+i}\smash (\AA[G]-0)^{t+i}\smash U_+,X_i]
 $$
\end{defi}

\begin{lemma}
A morphism $f:X\to Y$ of equivariant spectra is a stable equivalence if and only if it induces isomorphisms
$$
\pi^H_{s,t}(f):\pi^H_{s,t}(X)\xrightarrow{\cong}\pi^H_{s,t}(Y)
$$
for all $s,t\in\ZZ$ and $H\leq G$.
\end{lemma}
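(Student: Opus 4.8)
The plan is to read the statement off from the sequence of reformulations of Definition~\ref{def:stableweq} already carried out in the paragraphs immediately preceding it. That discussion shows that $f\colon X\to Y$ is a stable equivalence precisely when, for all $m,n\in\NN$, all $H\leq G$ and all $U\in\smk$, the map $\colim_i [G/H\smash S^{n+i}\smash(\AA[G]-0)^{i}\smash U_+,X_{m+i}] \to \colim_i [G/H\smash S^{n+i}\smash(\AA[G]-0)^{i}\smash U_+,Y_{m+i}]$ is an isomorphism, where the passage from $\TT_G$-suspensions to bi-suspensions uses the splitting $\TT_G\cong S^1\smash(\AA[G]-0)$ of Lemma~\ref{lemma:TT_Gsplitting1}. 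So the task is just to match these colimits with the weighted stable homotopy group presheaves $\pi^H_{s,t}$.

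First I would perform the reindexing. For fixed $m$ the substitution $j=m+i$ is cofinal, and it turns the $i$-th term $[G/H\smash S^{n+i}\smash(\AA[G]-0)^{i}\smash U_+,X_{m+i}]$ into $[G/H\smash S^{(n-m)+j}\smash(\AA[G]-0)^{(-m)+j}\smash U_+,X_j]$; moreover the transition maps of the colimit --- which smash in one copy of $\TT_G\cong S^1\smash(\AA[G]-0)$ and apply a bonding map --- are exactly those in the colimit defining $\pi^H_{n-m,-m}(X)(U)$. Hence the reformulation above says precisely that $f$ is a stable equivalence if and only if $\pi^H_{s,t}(f)$ is an isomorphism for every $H\leq G$ and every bidegree of the form $(s,t)=(n-m,-m)$ with $m,n\in\NN$. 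The backward implication of the lemma is then immediate, since all such bidegrees lie in $\ZZ\times\ZZ$.

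For the forward implication it remains to promote ``$\pi^H_{s,t}(f)$ is an isomorphism for $(s,t)\in\{(n-m,-m):m,n\geq 0\}$'' --- a region that only fills out $\{(s,t):t\leq 0,\ s\geq t\}$ --- to the same statement for every $(s,t)\in\ZZ\times\ZZ$. Here I would use that the levelwise (de)suspension functors attached to $\TT_G$, to $S^1$ and to $(\AA[G]-0)$ are Quillen equivalences on the stable model structure: for $S^V$ with $V$ a representation this is Proposition~\ref{prop:stabilizationcontrol}, and the remaining cases follow from it together with Lemma~\ref{lemma:TT_Gsplitting1} and the invertibility of smashing with $\TT_G$ in the stable homotopy category established above; since every object is cofibrant in the injective model structures, these functors and their adjoints preserve stable equivalences. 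A direct reindexing of the defining colimit --- for the loop functors simply the adjunction $[A\smash S^1,B]\cong[A,\Omega^{S^1}B]$, combined with the cofinal shift $j=i+1$ when $\mathrm{sh}$ is used --- shows that applying such a functor to $X$ shifts $\pi^H_{s,t}(X)$ by the corresponding bidegree. Thus, given an arbitrary $(s,t)$, one twists $f$ by a suitable composite of these functors so that the bidegree in question is carried into the region $\{t\leq 0,\ s\geq t\}$, where the isomorphism is already known; since the twisted map is still a stable equivalence, this finishes the argument.

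I do not anticipate a genuine obstacle: the lemma is essentially a dictionary entry translating Definition~\ref{def:stableweq} (as already unwound in the text) into the language of the weighted stable homotopy groups. The only points that need real care are keeping the three running indices --- the $S^1$-exponent, the $(\AA[G]-0)$-exponent, and the spectrum level --- synchronized in the cofinal reindexing $j=m+i$, and checking that the levelwise (de)suspension functors invoked in the last paragraph really do act on $\pi^H_{*,*}$ by the expected degree shift and send stable equivalences to stable equivalences.
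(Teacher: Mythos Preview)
Your proposal is correct and in spirit matches the paper, which simply records the statement as ``the analog of \cite[Lemma 3.7]{MSS}'' and gives no further argument; you are effectively unpacking what that reference does in the equivariant setting, using the reformulation already carried out in the text together with the bispectrum/periodicity facts that the paper explicitly defers to Jardine's Section~3.3. The only point to watch is the one you flag yourself: the separate stable invertibility of $S^1$ and of $(\AA[G]-0)$ is not proved in the paper but is exactly what the omitted ``$S^1\mid(\AA[G]-0)$-bispectra'' discussion supplies, so your appeal to it is legitimate in this context.
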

\begin{proof}
 This is the analog of \cite[Lemma 3.7]{MSS}.
\end{proof}

\subsubsection*{Cofiber and Fiber Sequences}
Recall from Theorem \ref{thm:stableeqms} and Proposition \ref{prop:stabilizationcontrol} that we consider $\Nspec(\Gsmk)$ as a proper stable model category. The theory of cofiber and fiber sequences is therefore quite convenient.
 Given a morphism $f:X\to Y$ of equivariant spectra the homotopy cofiber (resp.~homotopy fiber) is defined by the homotopy push out (resp.~homotopy pullback) square
\begin{equation*}
 \begin{xy}
\xymatrix{
X\ar[r]^f\ar[d]&Y\ar[d]& &hofib(f)\ar[r]\ar[d]&\ast\ar[d]\\
\ast\ar[r]& hocofib(f)& & X\ar[r]^f & Y.
}
\end{xy}
\end{equation*}
The simplicial structure on $\Nspec(\Gsmk)$ provided by Theorem \ref{thm:stableeqms} implies that there is a stable weak equivalence
$$
hocofib(X\to\ast)\simeq S^1\smash X.
$$

At this point we omit a thorough introduction of the triangulated structure on the stable homotopy category $\SHKG$ via $S^1\mid(\AA[G]-0)$-bispectra and (co-) fiber sequences which works out perfectly analogous to what is developed in Jardine's Section 3.3 of \cite{MSS}. Instead, we just state the following important consequence.

\begin{lemma}
 Given a cofiber sequence 
 $$
 X\xrightarrow{f}Y\to hocofib(f)
 $$
 of equivariant spectra, there is a long exact sequence of presheaves of groups
\begin{equation}\label{eq:lesofhomotopygroups}
 \ldots\to\pi^G_{s,t}(X)\to\pi^G_{s,t}(Y)\to\pi^G_{s,t}(hocofib(f))\to\pi^G_{s-1,t}(X)\to\ldots  
 \end{equation}
\end{lemma}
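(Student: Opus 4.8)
The plan is to derive the long exact sequence from the basic properties of a triangulated category together with the computation of weighted stable homotopy groups in terms of morphisms in the stable homotopy category. First I would recall that, as remarked just before the statement, $\SHKG$ carries a triangulated structure constructed exactly as in Jardine's Section 3.3 of \cite{MSS}, in which the distinguished triangles are (up to isomorphism) the cofiber sequences $X\xrightarrow{f}Y\to hocofib(f)\to S^1\smash X$; here the suspension functor is $S^1\smash-$, which is invertible on $\SHKG$ since smashing with $\TT_G\cong S^1\smash(\AA[G]-0)$ is invertible (by the theorem above) and smashing with $(\AA[G]-0)$ is invertible by Proposition \ref{prop:stabilizationcontrol}. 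In particular, rotating the triangle shows that $hocofib(f)$ sits in a distinguished triangle with $X$ and $Y$ in every rotation, so the standard homological algebra of triangulated categories applies.

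Next I would observe that the weighted stable homotopy group presheaf $\pi^G_{s,t}(X)$ is corepresented in $\SHKG$: by the definition of $\pi^G_{s,t}$ and the construction of stable maps as colimits of sets of homotopy classes (the same colimit that appears in \eqref{eq:leadstostablepi} and the discussion following Lemma \ref{lemma:TT_Gsplitting1}), one has a natural isomorphism
$$
\pi^G_{s,t}(X)(U)\cong[\Sigma^\infty(G/G\smash S^{s}\smash(\AA[G]-0)^{t}\smash U_+),X]_{\SHKG}
$$
for $U\in\smk$, where negative shifts in $s$ or $t$ are interpreted using the invertibility of $S^1\smash-$ and $(\AA[G]-0)\smash-$. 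Since representable (co)functors out of a triangulated category send distinguished triangles to long exact sequences of abelian groups, applying $[\,\Sigma^\infty(G/G\smash S^{s}\smash(\AA[G]-0)^{t}\smash U_+),\,-\,]$ to the rotated distinguished triangle
$$
\cdots\to S^{-1}\smash hocofib(f)\to X\xrightarrow{f}Y\to hocofib(f)\to S^1\smash X\to\cdots
$$
and using the adjunction/invertibility identity $[\,A,\,S^1\smash X\,]\cong[\,S^{-1}\smash A,\,X\,]\cong \pi^G_{s-1,t}(X)(U)$ yields, sectionwise in $U$, exactly the sequence \eqref{eq:lesofhomotopygroups}. Exactness as presheaves of groups is then immediate since exactness of presheaves is checked sectionwise.

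The main obstacle I expect is bookkeeping rather than anything deep: one has to be careful that the functor $S^1\smash-$ used to build the triangulated structure agrees (up to the $\AA^1$-equivalence of Lemma \ref{lemma:TT_Gsplitting1}) with the desuspension that shifts the index $s$ in $\pi^G_{s,t}$, so that the connecting map lands in $\pi^G_{s-1,t}(X)$ and not in some reindexed or reweighted group; this amounts to checking that smashing with $S^1$ shifts $s$ by one while leaving $t$ fixed, which follows from the very definition of the weighted homotopy groups together with $hocofib(X\to\ast)\simeq S^1\smash X$. A secondary point is to justify the corepresentability isomorphism for all $s,t\in\ZZ$, including negative values, which uses Proposition \ref{prop:stabilizationcontrol} to invert $(\AA[G]-0)\smash-$ and the Quillen equivalence above to invert $\TT_G\smash-$; once these invertibility statements are in hand the extension from $s,t\geq 0$ to arbitrary integers is formal. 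Apart from these indexing checks, the argument is the standard one deducing a long exact sequence of (co)representable functors from a distinguished triangle, so I would present it briefly by reference to \cite{MSS} Section 3.3, as the paper already signals.
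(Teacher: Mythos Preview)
Your proposal is correct and follows essentially the same approach as the paper, which in fact gives no proof of its own but simply points to the triangulated structure on $\SHKG$ built via $S^1\!\mid\!(\AA[G]-0)$-bispectra ``perfectly analogous to'' Jardine's Section~3.3 of \cite{MSS}; your sketch just spells out what that reference entails. The only small difference is packaging: the paper invokes bispectra to set up the triangulated category, whereas you argue directly with invertibility of $S^1\smash-$ and $(\AA[G]-0)\smash-$ via Proposition~\ref{prop:stabilizationcontrol} and the $\TT_G$-Quillen equivalence, but this is exactly the content of the bispectra construction and leads to the same corepresentability and the same standard long exact sequence argument.
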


\subsection{Naive $G$-Spectra and Change of Universe} For a smooth connection between stable equivariant and non-equivariant homotopy theories it is convenient to introduce naive $G$-spectra, a natural intermediate. We mirror some results from the topological theory (cf.~\cite[Section II]{lewis1986equivariant}).

\begin{defi} An object in $\Nspec(\sPre_.(\Gsmk),T\smash -)$ is called a (sequential) \emph{naive} $G$-spectrum. We consider the category $\Nspec(\sPre_.(\Gsmk),T\smash -)$ of naive $G$-spectra as endowed with the stable model structure analogous to \eqref{eq:stableyoga}, i.e.~take the $\AA^1$-local injective model structure with respect to the $H$-Nisnevich topology on $\sPre_.(\Gsmk)$ and localize the levelwise (projective) model structure on $\Nspec(\sPre_.(\Gsmk),T\smash-)$ along stable equivalences.
\end{defi}

We will usually continue to call an object $E$ in $\Nspec(\sPre_.(\Gsmk),\TT_G\smash-)$ an equivariant spectrum or $G$-spectrum, but to emphasize the distinction $E$ is sometimes called a genuine $G$-spectrum.

Given a non-equivariant spectrum $X$ in $\Nspec(\sPre_.(\smk))$ we may apply the canonical prolongation of the trivial $G$-action functor \eqref{eq:trivialGactionKANextended}
$$
(-)\tr:\sPre(\smk)\to\sPre(\Gsmk)
$$
on $X$ to obtain a naive $G$-spectrum $X\tr$. Let $E$ be any naive $G$-spectrum and define a genuine $G$-spectrum $i_*E$ by $(i_*E)_n= \widetilde{\TT}_G^n\smash E_n$ with bonding maps
$$
 \TT_G\smash i_*E_n\cong \widetilde{\TT}_G\smash T\smash i_*E_n\xrightarrow{\id\smash\sigma_n}\widetilde{\TT}_G^{n+1}\smash E_{n+1}.
$$
The resulting functor $i_*$ from naive to genuine $G$-spectra has a right adjoint $i^*$, which is defined by $(i^*E)_n=\underline{\Hom}_G(\widetilde{\TT}_G^n,E_n)$ with bonding maps
\begin{align*}
 &T\smash i^*E_n\to i^*E_{n+1}=\underline{\Hom}_G(\widetilde{\TT}_G^{n+1},E_{n+1}) \text{ adjoint to}\\
 &\widetilde{\TT}^{n+1}_G\smash T\smash i^*E_n\cong \TT_G\smash\widetilde{\TT}^{n}_G\smash  \underline{\Hom}_G(\widetilde{\TT}_G^n,E_n) \xrightarrow{ev}\TT_G\smash E_n\xrightarrow{\sigma_n} E_{n+1}.
\end{align*}

This way, we have defined a \emph{change of universe} adjunction
$$
i_*:\Nspec(\sPre_.(\Gsmk),T\smash-)\rightleftarrows\Nspec(\sPre_.(\Gsmk),\TT_G\smash-):i^*.
$$
The name is derived from an account to classical stable equivariant topology based on coordinate-free spectra, where spectra are indexed on a universe with a trivial $G$-action in the naive case and indexed on a universe of arbitrary representations in the genuine case.

\begin{lemma}\label{lemma:changeofuniverseisQuillen}
The change of universe adjunction $(i_*,i^*)$ is a Quillen adjunction with respect to the stable model structures.
\end{lemma}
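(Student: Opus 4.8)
The plan is to establish the Quillen property in two steps. First I would check that $(i_*,i^*)$ is a Quillen pair for the underlying \emph{levelwise} (projective) model structures on the two categories of spectra. Then, since each stable model structure is a left Bousfield localization of the corresponding levelwise one, I would descend to the stable structures using Hirschhorn's localization lemma, Proposition~\ref{prop:hirschhorn}; the only non-formal point there is to compute the effect of $i_*$ on the stabilizing maps, and this rests on the isomorphism $\TT_G\cong\widetilde{\TT}_G\smash T$ that is already part of the definition of $i_*$.

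For the levelwise step, recall that in the levelwise projective model structure on a category of sequential spectra the fibrations and the trivial fibrations are precisely the levelwise ones. In level $n$ the functor $i^*$ is $\underline{\Hom}_G(\widetilde{\TT}_G^n,-)$, which is a right Quillen endofunctor of $\sPre_.(\Gsmk)$ with the $\AA^1$-local injective model structure, its left adjoint $\widetilde{\TT}_G^n\smash-$ being left Quillen by the same reasoning used throughout for $\TT_G\smash-$ (smashing with a fixed pointed presheaf preserves monomorphisms, and preserves $\AA^1$-acyclic monomorphisms by the pushout--product axiom). Hence $i^*$ sends levelwise fibrations to levelwise fibrations and levelwise trivial fibrations to levelwise trivial fibrations, so it is right Quillen for the levelwise structures and $(i_*,i^*)$ is a levelwise Quillen pair.

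For the stable step, recall that the naive stable model structure is the left Bousfield localization $L_{\bar S}$ of the naive levelwise structure at the set $\bar S$ of stabilizing maps $F_{n+1}(T\smash A)\to F_n(A)$, where $F_n$ is the free shift-desuspension functor, and that the genuine stable model structure is, likewise, a left Bousfield localization of the genuine levelwise structure (this is the Hovey--Jardine package already used for Theorem~\ref{thm:stableeqms}, cf.~\cite{hovey2001spectra, MSS}). I would then check, levelwise and compatibly with bonding maps, that $i_*$ carries a free naive spectrum $F_n(A)$ to the free genuine spectrum $F_n(\widetilde{\TT}_G^n\smash A)$: in level $k\ge n$ the value of the former is $\widetilde{\TT}_G^k\smash T^{k-n}\smash A\cong\widetilde{\TT}_G^n\smash(\widetilde{\TT}_G\smash T)^{k-n}\smash A\cong\TT_G^{k-n}\smash(\widetilde{\TT}_G^n\smash A)$, using $\TT_G\cong\widetilde{\TT}_G\smash T$ and the symmetry of the smash product. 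Consequently $i_*$ takes a naive stabilizing map to the genuine shift-desuspension map $F_{n+1}(\TT_G\smash B)\to F_n(B)$ with $B=\widetilde{\TT}_G^n\smash A$, which is a genuine stable equivalence. Since $F_n$ is left Quillen and every object is cofibrant in the injective structure, these stabilizing maps are maps of cofibrant spectra, so the total left derived functor $\mathbb{L}_{i_*}$ agrees with $i_*$ on them, and every map in the class $\mathbb{L}_{i_*}\bar S$ is a genuine stable equivalence. Now Proposition~\ref{prop:hirschhorn}, applied to the levelwise Quillen pair and the class $\bar S$, gives a Quillen pair $(i_*,i^*)\colon L_{\bar S}\rightleftarrows L_{\mathbb{L}_{i_*}\bar S}$ between localizations of the two levelwise structures; and since $\mathbb{L}_{i_*}\bar S$ consists of genuine stable equivalences, the genuine stable structure is a further left Bousfield localization of $L_{\mathbb{L}_{i_*}\bar S}$, so the identity functor $L_{\mathbb{L}_{i_*}\bar S}\to(\text{genuine stable})$ is left Quillen (the two have the same cofibrations, and $\mathbb{L}_{i_*}\bar S$-local equivalences are among the stable equivalences). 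Composing the two left Quillen functors shows that $i_*$ is left Quillen from the naive stable structure to the genuine stable structure, which is the claim.

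I expect the main obstacle to be the computation in the third paragraph: identifying $i_*(F_n(A))$ with $F_n(\widetilde{\TT}_G^n\smash A)$ together with the matching of bonding maps, and thereby recognizing $i_*$ of the naive stabilizing maps as genuine shift-desuspension maps. Everything else is formal --- the levelwise Quillen property, the two invocations of localization generalities (Proposition~\ref{prop:hirschhorn} and the left Quillen property of the identity functor between nested Bousfield localizations), and the standard fact that shift-desuspension maps are stable equivalences. The only further thing one must keep in mind is that the two stable model structures genuinely arise as Bousfield localizations of the levelwise ones, which is exactly what the Hovey--Jardine construction used elsewhere in this section guarantees.
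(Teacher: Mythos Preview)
Your argument is correct, but it takes a different route from the paper's. The paper argues on the right adjoint: after noting the levelwise Quillen property, it shows directly that $i^*$ sends stably fibrant genuine spectra to stably fibrant naive spectra by manipulating the $\Omega$-spectrum condition
\[
\underline{\Hom}_G(\widetilde{\TT}_G^n,X_n)\simeq\underline{\Hom}_G(\widetilde{\TT}_G^n,\underline{\Hom}_G(\TT_G,X_{n+1}))\cong\underline{\Hom}_G(T,\underline{\Hom}_G(\widetilde{\TT}_G^{n+1},X_{n+1})),
\]
and then invokes the simplicial (SM7)-style characterization of stable equivalences \cite[Corollary~2.12]{MSS} to conclude that $i_*$ preserves stable acyclic cofibrations. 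This is exactly the template the paper reuses in Lemma~\ref{lemma:inducednaivequillen}. Your approach instead works on the left adjoint: you identify $i_*F_n(A)\cong F_n(\widetilde{\TT}_G^n\smash A)$, so that $i_*$ carries the naive stabilizing maps to genuine ones, and then feed this into Proposition~\ref{prop:hirschhorn} together with a nested-localization argument; this mirrors the strategy of Lemma~\ref{lemma:HA1rightQuillen}. Each approach has its payoff: the paper's yields the side fact that $i^*$ preserves stably fibrant objects, while yours yields an explicit description of $i_*$ on free spectra. The one place where your proof demands more bookkeeping than the paper's is the coherence of the twist isomorphisms needed to match bonding maps in the identification $i_*F_n(A)\cong F_n(\widetilde{\TT}_G^n\smash A)$; you correctly flag this as the main obstacle, and it does go through, but the paper's $\Omega$-spectrum computation sidesteps it entirely.
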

\begin{proof}
The pair $(i_*,i^*)$ is a Quillen adjunction with respect to the levelwise model structures. Let $X$ be a stably fibrant genuine $G$-spectrum, in particular we have weak equivalences
$$
X_n\xrightarrow{\sim}\underline{\Hom}_G(\TT_G,X_{n+1})
$$
of $\AA^1$-locally fibrant simplicial presheaves for every $n$. The right Quillen functor $\underline{\Hom}_G(\widetilde{\TT}_G^n,-)$ preserves them and we compute
\begin{align*}
i^*X_n\cong\underline{\Hom}_G(\widetilde{\TT}_G^n,X_n)&\simeq\underline{\Hom}_G(\widetilde{\TT}_G^n,\underline{\Hom}_G(\TT_G,X_{n+1}))\\
&\cong \underline{\Hom}_G(T,\underline{\Hom}_G(\widetilde{\TT}^{n+1}_G,X_{n+1}))=(\Omega_Ti^*X)_n
\end{align*}
and note that $i^*X$ is a stably fibrant naive $G$-spectrum \cite[Lemma 2.7]{MSS}. Further, the adjunction $(i_*,i^*)$ is compatible with the simplicial enrichments and we combine this with the (SM7)-style characterization of stable equivalences \cite[Corollary 2.12]{MSS}: Let $W$ be a stably fibrant and levelwise-injective fibrant genuine $G$-spectrum and let $f:X\to Y$ be a trivial cofibration of naive $G$-spectra. The diagram
\begin{equation*}
\begin{xy}
\xymatrix{
\sset(i_*Y,W)\ar[r]^{i_*f^*}\ar[d]^\cong&\sset(i_*X,W)\ar[d]^\cong\\
\sset(Y,i^*W)\ar[r]^\sim_{f^*}&\sset(X,i^*W)
}
\end{xy}
\end{equation*}
commutes and therefore $i_*f$ is a stable equivalence (and a cofibration).
\end{proof}

The forgetful functor $(-)^e:\sPre(\Gsmk)\to\sPre(\smk)$ (the $e$-fixed points functor) also has a canonical prolongation
$$
(-)^e:\Nspec(\sPre_.(\Gsmk),T\smash-)\to\Nspec(\sPre_.(\smk),T\smash-)
$$
and for a (genuine) $G$-spectrum $E$, we call $E^e$ (resp.~$(i^*E)^e$) the underlying non-equivariant spectrum of $E$.

\begin{lemma}\label{lemma:naiveunit}
 Let $E$ be a naive $G$-spectrum. The unit morphism
 $$
 E\to i^*i_*E
 $$
 is a non-equivariant stable equivalence.
\end{lemma}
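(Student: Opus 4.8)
The claim is that the unit $E \to i^*i_*E$ of the change-of-universe adjunction is a non-equivariant stable equivalence. Since the underlying-spectrum functor $(-)^e$ detects nothing by itself, I interpret "non-equivariant stable equivalence" as: the map is taken to a stable equivalence of $\PP^1$-spectra after applying $(-)^e$, or equivalently (by the characterization via weighted stable homotopy groups together with Corollary~\ref{cor:charofeqlocaleq}) it is a stable equivalence of naive $G$-spectra. The strategy is to unwind both sides levelwise and reduce the statement to the already-established fact (Lemma~\ref{lemma:changeofuniverseisQuillen}, and the splitting $\TT_G \cong \widetilde{\TT}_G \smash T$ implicit in the construction of $i_*$) that smashing with $\widetilde{\TT}_G$ and then taking the internal hom $\underline{\Hom}_G(\widetilde{\TT}_G,-)$ is, up to stable equivalence, the identity on suitably fibrant objects — this is the analog of the topological statement that on a trivial universe the genuine and naive theories agree.

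First I would write out the composite: $(i^*i_*E)_n = \underline{\Hom}_G(\widetilde{\TT}_G^n, \widetilde{\TT}_G^n \smash E_n)$, and the unit is induced levelwise by the evident map $E_n \to \underline{\Hom}_G(\widetilde{\TT}_G^n, \widetilde{\TT}_G^n \smash E_n)$ adjoint to the identity on $\widetilde{\TT}_G^n \smash E_n$. Because $\widetilde{\TT}_G$ is a representation sphere on which $G$ acts through the regular representation, and because we work levelwise with the $\AA^1$-local injective model structure, the key input is that for an $\AA^1$-locally fibrant pointed presheaf $A$ the natural map $A \to \underline{\Hom}_G(\widetilde{\TT}_G, \widetilde{\TT}_G \smash A)$ is an $\AA^1$-local weak equivalence after a levelwise fibrant replacement; iterating gives the same for $\widetilde{\TT}_G^n$. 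I would deduce this from Proposition~\ref{prop:stabilizationcontrol} (invertibility of $-\smash S^V$ stably) applied to the representation $V = \AA[G]$, combined with the fact that $\widetilde{\TT}_G = S^{\AA[G]}$ splits as $S^1 \smash (\AA[G]-0)$ (Lemma~\ref{lemma:TT_Gsplitting1}) so that smashing with it raises the topological sphere degree by one and the weight by one in a controlled way — exactly matching the shift in the bonding maps of $i_*E$.

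The cleanest route is via homotopy groups: using the identification of stable equivalences of naive $G$-spectra by isomorphisms on $\pi^H_{s,t}$ for all $H \leq G$ (the naive analog of \cite[Lemma~3.7]{MSS}, which holds here by the same argument), it suffices to show the unit induces isomorphisms on each $\pi^H_{s,t}$. Now $\pi^H_{s,t}(i^*i_*E)$ is computed from a colimit over $i$ of maps out of $G/H \smash S^{s+i} \smash (\AA[G]-0)^{t+i} \smash U_+$ into $(i^*i_*E)_i = \underline{\Hom}_G(\widetilde{\TT}_G^i, \widetilde{\TT}_G^i \smash E_i)$; adjointing the $\widetilde{\TT}_G^i$ over and re-indexing the colimit (a standard cofinality argument, absorbing the extra $\widetilde{\TT}_G^i \cong S^i \smash (\AA[G]-0)^i$ into the sphere and weight indices) identifies this colimit with $\pi^H_{s,t}(E)$, compatibly with the unit map. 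This cofinality/re-indexing step is the main obstacle: one must check that the structure maps in the two colimit systems agree under the adjunction isomorphisms and that the shift by $\widetilde{\TT}_G^i$ on both the indexing sphere and the spectrum level is cofinal, which requires keeping careful track of the twist maps $\tau$ in the bonding maps and invoking the cyclic-permutation $\AA^1$-homotopy lemma to see that the reshuffling of smash factors is invisible in the homotopy category. Once that bookkeeping is done the isomorphism on all $\pi^H_{s,t}$ is immediate, and hence so is the claim.
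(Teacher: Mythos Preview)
There are two genuine problems.

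\textbf{First, the ``or equivalently'' is false.} A non-equivariant stable equivalence of naive $G$-spectra means only that $(-)^e$ of the map is a stable equivalence of $T$-spectra; it does \emph{not} mean the map is a stable equivalence of naive $G$-spectra (which would require all $(-)^H$). Corollary~\ref{cor:charofeqlocaleq} says nothing of the sort in the stable setting. In fact the stronger statement you set out to prove---that $E\to i^*i_*E$ induces isomorphisms on $\pi^H_{s,t}$ for all $H$---is false already in classical equivariant topology (for $E$ the naive sphere, $i^*i_*E$ sees representation-sphere homotopy that $E$ does not). So the plan to check all $\pi^H_{s,t}$ cannot succeed; you must restrict to $H=e$.

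\textbf{Second, the cofinality step is missing its two essential ingredients.} Your identification $\widetilde{\TT}_G^i \cong S^i\smash(\AA[G]-0)^i$ is wrong: that is $\TT_G^i$, not $\widetilde{\TT}_G^i$ (recall $\TT_G = T\smash\widetilde{\TT}_G$). More importantly, even for $H=e$ you cannot simply ``absorb $\widetilde{\TT}_G^i$ into the indexing'' for a general spectrum $E$, because the levels $E_i$ themselves change with $i$; there is no cofinal subsystem to extract. The paper handles this by first reducing to \emph{suspension spectra} (where $E_i=T^i\smash E_0$) and then extending to arbitrary $E$ by writing it as a filtered colimit of shifted suspension spectra. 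For suspension spectra the actual mechanism is the shearing equivalence
\[
G_+\smash \widetilde{\TT}_G^{m}\ \simeq\ G_+\smash T^{\,m(|G|-1)},\qquad (g,x)\mapsto (g,g\cdot x),
\]
which lets one replace $\underline{\Hom}_G(\widetilde{\TT}_G^m,-)^e \cong \underline{\Hom}(T^{m(|G|-1)},(-)^e)$ and then recognise the resulting colimit as cofinal in the one computing $R^\infty X^e$. This shearing trick is specific to the free orbit $G/e$ and has no analogue for $G/H$ with $H\neq e$---another reason the all-$H$ strategy fails. Finally, Proposition~\ref{prop:stabilizationcontrol} concerns \emph{genuine} $G$-spectra; $\widetilde{\TT}_G$ is not invertible in naive $G$-spectra (that is the whole point of $i_*$), so invoking it here is circular.
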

\begin{proof}
Let $X$ be a naive $G$-equivariant suspension spectrum. Consider the commutative diagram
\begin{equation}\label{eq:changeofuniverse}
\begin{xy}
\xymatrix{
X^e\ar[r]\ar[d]_\sim&i^*i_*X^e\ar[d]^\sim\\
R^\infty X^e\ar[r]&R^\infty i^*i_*X^e
}
\end{xy}
\end{equation}
of non-equivariant spectra. We compare domain and codomain of the lower horizontal morphism. The level $n$ in the domain is given by
\begin{align*}
 R^\infty X^e_n &= \colim_{j\geq 0}\underline{\Hom}(T^j,X^e_{j+n})\\
 &=\colim_{j\geq 0}\underline{\Hom}(T^j,T^j\smash X^e_{n})\\
 \intertext{while for the codomain we need a few transformations to compute}
 R^\infty i^*i_*X^e_n &= \colim_{j\geq 0}\underline{\Hom}(T^j,i^*i_*X^e_{j+n})\\
 &= \colim_{j\geq 0}\underline{\Hom}(T^j,\underline{\Hom}_G(\widetilde{\TT}_G^{j+n},\widetilde{\TT}_G^{j+n}\smash X_{j+n})^e)\\
 &= \colim_{j\geq 0}\underline{\Hom}_G(G_+\smash T^j\smash\widetilde{\TT}_G^{j+n},\widetilde{\TT}_G^{j+n}\smash X_{j+n})\\
\intertext{and replace $G_+\smash\widetilde{\TT}_G^{j+n}$ by the weakly equivalent $G_+\smash T^{(j+n)(|G|-1)}$. The equivariant weak equivalence is given by $G_+\smash Y^e\to G_+\smash Y, (g,x)\mapsto(g,g\cdot x)$ in $\sPre_.(\Gsmk)$. We continue}
 &\simeq \colim_{j\geq 0}\underline{\Hom}_G(G_+\smash T^{j+(j+n)(G-1)},\widetilde{\TT}_G^{j+n}\smash X_{j+n})\\
 &= \colim_{j\geq 0}\underline{\Hom}(T^{j+(j+n)(G-1)},(\widetilde{\TT}_G^{j+n}\smash X_{j+n})^e)\\
 &= \colim_{j\geq 0}\underline{\Hom}(T^{j+(j+n)(G-1)},T^{j+(j+n)(G-1)}\smash X_{n}^e).
\end{align*}
Thus, the (filtered and hence homotopy) colimit in the codomain is taken over a cofinal system for the colimit in the domain. Therefore, the lower horizontal morphism is a levelwise equivalence in diagram \eqref{eq:changeofuniverse}.

Now let $X$ be an arbitrary naive $G$-spectrum. $X$ is stably equivalent to the colimit
$$
\colim(\Sigma^\infty_TX_0\to\Sigma^\infty_TX_1[-1]\to\Sigma^\infty_TX_1[-1]\to\ldots)
$$
of shifted suspension spectra. By the same arguments as in \cite[Lemma 4.29]{MSS}, basically because stable weak equivalences are closed under filtered colimits \cite[Lemma 3.12]{MSS}, the conclusion follows from the first part of this proof.
\end{proof}

Not only the forgetful functor $(-)^e$ has a canonical prolongation, but also its space level adjoint functor $\ind=G_+\smash-$  prolongates canonically due to the twisting isomorphism $G_+\smash T\smash X\cong T\smash G_+\smash X$ to naive $G$-spectra.

\begin{lemma}\label{lemma:inducednaivequillen}
The adjunction
$$
\ind:\Nspec(\sPre_.(\smk),T\smash-)\rightleftarrows\Nspec(\sPre_.(\Gsmk),T\smash-):\res=(-)^e
$$
is a Quillen adjunction with respect to the stable model structures.
\end{lemma}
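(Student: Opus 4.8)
The plan is to imitate the proof of Lemma~\ref{lemma:changeofuniverseisQuillen}: first obtain the Quillen adjunction at the levelwise stage, and then promote it to the stable model structures by showing that the right adjoint $\res=(-)^e$ preserves stably fibrant objects and feeding this into Jardine's (SM7)-style characterization of stable equivalences \cite[Corollary 2.12]{MSS}.

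For the levelwise statement I would invoke Lemma~\ref{lemma:HA1rightQuillen} (and its evident pointed analogue) with $H=e$: this says precisely that $\ind=G_+\smash-$ is a left Quillen functor and $\res=(-)^e$ a right Quillen functor between $\sPre_.(\smk)$ and $\sPre_.(\Gsmk)$ for the $\AA^1$-local injective model structures. Because $T$ carries the trivial $G$-action there is a projection formula $\ind(A\smash T)\cong\ind(A)\smash T$, so $\ind$ commutes with suspension, prolongs canonically to $T$-spectra, and together with $\res$ is a Quillen adjunction for the levelwise (projective) model structures. In particular $\ind$ preserves cofibrations and $\res$ preserves levelwise ($\AA^1$-local injective) fibrant objects.

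The heart of the argument is that $\res$ preserves stably fibrant objects. Passing to right adjoints in the projection formula above gives a natural isomorphism $\res\circ\Omega_T\cong\Omega_T\circ\res$ on the space level, and hence levelwise on $T$-spectra. So if $W$ is a stably fibrant naive $G$-spectrum --- by \cite[Lemma 2.7]{MSS} a levelwise fibrant spectrum whose structure maps $W_n\to\Omega_TW_{n+1}$ are $\AA^1$-local weak equivalences --- then $\res W$ is levelwise fibrant, and each map $\res W_n\to\Omega_T\res W_{n+1}\cong\res\,\Omega_TW_{n+1}$ is $\res$ applied to an $\AA^1$-local weak equivalence, hence again an $\AA^1$-local weak equivalence by Proposition~\ref{prop:charofA1localequivariantweq}. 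Thus $\res W$ is stably fibrant; being right Quillen, $\res$ also keeps it levelwise injective fibrant.

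With this in hand I would conclude exactly as in Lemma~\ref{lemma:changeofuniverseisQuillen}. The adjunction $(\ind,\res)$ is compatible with the simplicial enrichments, so for any trivial cofibration $f:X\to Y$ of non-equivariant $T$-spectra and any stably fibrant and levelwise injective fibrant naive $G$-spectrum $W$ the square
\begin{equation*}
\begin{xy}
\xymatrix{
\sset(\ind Y,W)\ar[r]^{(\ind f)^*}\ar[d]_\cong&\sset(\ind X,W)\ar[d]^\cong\\
\sset(Y,\res W)\ar[r]^{f^*}&\sset(X,\res W)
}
\end{xy}
\end{equation*}
commutes, and by the previous step combined with \cite[Corollary 2.12]{MSS} the lower horizontal map is a weak equivalence; hence $\ind f$ is a stable equivalence. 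Since $\ind$ also preserves cofibrations, it is left Quillen for the stable model structures, which is the claim. The step I expect to need the most care is the preservation of stably fibrant objects --- specifically the internal-hom identity $\res\,\Omega_T\cong\Omega_T\,\res$, where the triviality of the $G$-action on $T$ is genuinely used and one must be attentive to the fact that $\sPre_.(\smk)$ and $\sPre_.(\Gsmk)$ sit over different sites. An alternative, possibly cleaner, route would be to deduce the whole lemma from Proposition~\ref{prop:hirschhorn} by identifying the class $\LL_{\ind}K$ of derived stabilization maps as a subclass of the stable equivalences of naive $G$-spectra, precisely the pattern used in the proof of Lemma~\ref{lemma:HA1rightQuillen}.
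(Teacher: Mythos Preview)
Your proposal is correct and follows essentially the same route as the paper: establish the levelwise Quillen adjunction via Lemma~\ref{lemma:HA1rightQuillen} with $H=e$, show that $\res$ preserves stably fibrant (and levelwise injective fibrant) objects using the identification $\res\circ\Omega_T\cong\Omega_T\circ\res$, and conclude via the simplicial (SM7)-style criterion \cite[Corollary~2.12]{MSS}. The paper additionally records that $\res$ commutes with $R^\infty$ and hence preserves stable equivalences outright, but this is not needed for the final step and your omission of it is harmless.
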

\begin{proof}
First, note that $(\ind,\res)$ is a Quillen adjunction for the levelwise model structures by Lemma \ref{lemma:HA1rightQuillen} and that $\res$ preserves levelwise equivalences. Since we have
\begin{align*}
 \res(R^\infty X)&= \res\left(\colim_{n\geq0}\underline{\Hom}_{G}(T^n,X_n)\right)\\
 &\cong \colim_{n\geq0}\res\left(\underline{\Hom}_{G}(T^n,X_n)\right)\\
 &\cong \colim_{n\geq0}\underline{\Hom}(T^n,\res(X_n))=R^\infty \res(X)\\
\end{align*}
it follows that $\res$ also preserves stable equivalences. Together with a characterization of stably fibrant objects \cite[Lemma 2.7\&2.8]{MSS} a similar computation reveals that $\res$ preserves stably fibrant objects. As the stable model structures are left Bousfield localizations of the levelwise ones, it is sufficient to show that $\ind$ maps trivial cofibrations to stable equivalences. So let $f:X\to Y$ be a trivial cofibration in $\Nspec(\sPre_.(\smk),T\smash-)$ and let $W$ be a stably fibrant and injective-levelwise fibrant object in $\Nspec(\sPre_.(\Gsmk),T\smash-)$. We make use of the simplicial structure and observe that the diagram
\begin{equation*}
\begin{xy}
\xymatrix{
\sset(\ind(Y),W)\ar[r]^{\ind(f)^*}\ar[d]^\cong&\sset(\ind(X),W)\ar[d]^\cong\\
\sset(Y,\res(W))\ar[r]^\sim_{f^*}&\sset(X,\res(W))
}
\end{xy}
\end{equation*}
commutes and that $\res(W)$ is still stably fibrant and 'injective'. Thus, $\ind(f)$ is a stable equivalence \cite[Corollary 2.12]{MSS}.
\end{proof}

\begin{lemma}\label{lemma:naivewhitehead}
Let $d:E\to F$ be a non-equivariant stable equivalence of naive $G$-spectra and let $X$ be stably equivalent to an induced naive $G$-spectrum. Then the map
$$
d_*:[X,E]\xrightarrow{\cong}[X,F] 
$$
is an isomorphism.
\end{lemma}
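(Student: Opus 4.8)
The plan is to reduce the statement, via the induction–restriction adjunction of Lemma~\ref{lemma:inducednaivequillen}, to the elementary fact that a stable equivalence of ordinary motivic spectra induces bijections on all stable homotopy classes. First I would use that weak equivalences are invertible in the stable homotopy category, so that the functors $[-,E]$ and $[-,F]$ carry stable equivalences to bijections; since $d_*$ is natural in $X$, I may therefore replace $X$ by the stably equivalent induced spectrum and assume from the outset that $X=\ind(Z)=G_+\smash Z$ for some non-equivariant spectrum $Z$.

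Next I would pass to homotopy categories in the Quillen pair
$$
\ind\colon\Nspec(\sPre_.(\smk),T\smash-)\rightleftarrows\Nspec(\sPre_.(\Gsmk),T\smash-)\colon\res .
$$
Here $\res=(-)^e$ preserves all stable equivalences, as was shown in the proof of Lemma~\ref{lemma:inducednaivequillen}, and $\ind$ preserves all stable equivalences as well, because it is given levelwise — and hence also after stabilisation — by the finite wedge $G_+\smash(-)$. Thus the total derived functors agree with the original ones, the adjunction descends verbatim to the stable homotopy categories, and for every naive $G$-spectrum $E$ one obtains a natural isomorphism $[\ind(Z),E]\cong[Z,E^e]$.

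Finally I would trace $d_*$ through this isomorphism: naturality in $E$ identifies $d_*\colon[\ind(Z),E]\to[\ind(Z),F]$ with $(d^e)_*\colon[Z,E^e]\to[Z,F^e]$. By hypothesis $d$ is a non-equivariant stable equivalence, so $d^e\colon E^e\to F^e$ is a stable equivalence of ordinary motivic spectra, hence an isomorphism in the ordinary stable homotopy category; therefore $(d^e)_*$ is bijective, and so is $d_*$.

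The whole argument is the formal Whitehead argument, so I do not expect a serious obstacle. The single point that has to be spelled out is the claim that $\ind$ preserves stable equivalences — needed so that the derived adjunction is literally the original pair of functors — which follows from the fact that a finite wedge of stable equivalences of spectra is again a stable equivalence, in complete analogy with the closure of stable equivalences under filtered colimits used elsewhere in the text.
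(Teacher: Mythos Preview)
Your argument is correct and follows essentially the same route as the paper's proof: replace $X$ by $\ind(D)$, invoke the derived adjunction $[\ind(D),E]\cong[D,E^e]$ from Lemma~\ref{lemma:inducednaivequillen}, and conclude since $d^e$ is a stable equivalence. Your additional remark that $\ind=G_+\smash-$ preserves all stable equivalences (so that the derived adjunction is literally the underived one) is a welcome point of care that the paper leaves implicit.
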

\begin{proof}
Due to naturality the diagram
\begin{equation*}
\begin{xy}
\xymatrix{
[X,E]\ar[d]^{d_*}\ar[r]^\cong&[\ind(D),E]\ar[d]^{d_*}\ar[r]^\cong&[D,E^e]\ar[d]^{d^e_*}\\
[X,F]\ar[r]^\cong&[\ind(D),F]\ar[r]^\cong&[D,F^e]\\
}
\end{xy}
\end{equation*}
commutes, where the maps decorated with '$\cong$' are isomorphisms by Lemma \ref{lemma:inducednaivequillen} and the assumption of a stable equivalence between $X$ and $\ind(D)$. Further, we assume the $d^e$ is a stable equivalence, hence $(d^e)_*$ and $d_*$ are isomorphisms.
\end{proof}

\begin{proposition}\label{prop:reducetonaive}
Let $X$ be stably equivalent to an induced naive $G$-spectrum and let $E$ be any naive $G$-spectrum. Then there is an isomorphism
$$
i_*:[X,E]\xrightarrow{\cong}[i_*X,i_*E]. 
$$
\end{proposition}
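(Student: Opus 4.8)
The plan is to reduce the claim about the genuine unit/counit of the change-of-universe adjunction to the naive statements already established, exploiting the fact that $X$ is stably equivalent to an induced spectrum. First I would observe that since both sides of the asserted isomorphism $[X,E]\to[i_*X,i_*E]$ are homotopy classes of maps in stable model categories, and since $i_*$ is left Quillen by Lemma \ref{lemma:changeofuniverseisQuillen}, the map $i_*$ is at least well-defined on homotopy classes; the content is that it is a bijection precisely when $X$ is induced (up to stable equivalence). By the adjunction $(i_*,i^*)$ of Lemma \ref{lemma:changeofuniverseisQuillen}, there is a natural identification $[i_*X,i_*E]\cong[X,i^*i_*E]$, under which the map $i_*$ becomes postcomposition with the unit $\eta_E:E\to i^*i_*E$. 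So it suffices to show that $(\eta_E)_*:[X,E]\to[X,i^*i_*E]$ is an isomorphism.

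Now the key input is Lemma \ref{lemma:naiveunit}, which says $\eta_E:E\to i^*i_*E$ is a \emph{non-equivariant} stable equivalence of naive $G$-spectra. This is exactly the hypothesis of Lemma \ref{lemma:naivewhitehead} (the naive Whitehead-type statement): for $d=\eta_E$ a non-equivariant stable equivalence of naive $G$-spectra and $X$ stably equivalent to an induced naive $G$-spectrum, the induced map $d_*:[X,E]\to[X,i^*i_*E]$ is an isomorphism. Combining this with the adjunction identification of the previous paragraph yields the desired conclusion that $i_*:[X,E]\to[i_*X,i_*E]$ is an isomorphism.

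One point that needs a little care is the passage between the two possible readings of $[X,E]$: here $E$ and $i^*i_*E$ are objects of the \emph{naive} stable category, and $X$ is likewise naive, so all of Lemma \ref{lemma:naivewhitehead} applies verbatim; meanwhile on the right side $i_*X$ and $i_*E$ are genuine $G$-spectra and $[i_*X,i_*E]$ is computed there. The adjunction isomorphism $[i_*X,i_*E]\cong[X,i^*i_*E]$ that bridges these is the derived adjunction coming from the Quillen pair $(i_*,i^*)$; invoking it requires knowing $i^*i_*E$ (equivalently $i_*E$) has the appropriate fibrancy, which is handled inside the proof of Lemma \ref{lemma:changeofuniverseisQuillen} where it is shown that $i^*$ preserves stably fibrant objects. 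With that in hand the argument is a two-line diagram chase. The main (only real) obstacle is making sure the naturality of the adjunction identifies the map labelled $i_*$ with postcomposition by the unit $\eta_E$; once that bookkeeping is pinned down, everything follows from Lemmas \ref{lemma:naiveunit} and \ref{lemma:naivewhitehead}.
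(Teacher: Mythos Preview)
Your proposal is correct and follows essentially the same approach as the paper: factor $i_*$ as the composite $[X,E]\xrightarrow{(\eta_E)_*}[X,i^*i_*E]\cong[i_*X,i_*E]$, then invoke Lemma~\ref{lemma:naiveunit} and Lemma~\ref{lemma:naivewhitehead}. The paper's proof is a one-line version of exactly this argument, without the additional remarks on fibrancy and the derived adjunction that you include.
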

\begin{proof}
By Lemma \ref{lemma:naiveunit} and Lemma \ref{lemma:naivewhitehead} the morphism $i_*$ is a composition of isomorphisms
$$
i_* :[X,E]\xrightarrow{\eta_E}[X,i^*i_*E]\cong[i_*X,i_*E]. 
$$
\end{proof}

With the same arguments as for Lemma \ref{lemma:inducednaivequillen} all the other induction/restriction adjunctions
$$
\ind^H_G:\sPre_.(H\smk)\rightleftarrows\sPre(\Gsmk):\res^G_H
$$
prolongate to Quillen adjunctions between the respective naive equivariant categories as well. This is also true for the fixed-point functors and we record the following lemma for the study of fixed-point functors of genuine $G$-spectra in the next subsection.

\begin{lemma}\label{lemma:prolongfixedareQuillen}
For all $H\leq G$, the canonically prolongated adjunction
$$
(-)_H:\Nspec(\sPre_.(\smk),T\smash-)\rightleftarrows\Nspec(\sPre_.(\Gsmk),T\smash-):(-)^H
$$
is a Quillen adjunction with respect to the stable model structure on both sides.
\end{lemma}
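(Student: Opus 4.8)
The plan is to follow the same template used for Lemma \ref{lemma:inducednaivequillen} and Lemma \ref{lemma:changeofuniverseisQuillen}, namely to verify that the prolongated pair is a Quillen adjunction by checking it on the levelwise model structures first and then promoting to the stable localizations. First I would observe that $(-)_H$ and $(-)^H$ already prolongate to an adjunction between the naive spectrum categories, since $(-)^H$ commutes with the internal hom $\underline{\Hom}(T,-)$ on the non-equivariant side (the bonding maps are untwisted here, so there is nothing subtle), and that $((-)_H,(-)^H)$ is a Quillen adjunction for the \emph{levelwise} model structures directly by Lemma \ref{lemma:HA1rightQuillen}. Moreover $(-)^H$ preserves levelwise equivalences, again by Lemma \ref{lemma:HA1rightQuillen} together with Corollary \ref{cor:charofeqlocaleq}.

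Next I would show that $(-)^H$ preserves stable equivalences and stably fibrant objects. For this the key computation is that $(-)^H$ commutes with the functor $R^\infty=\colim_n\underline{\Hom}(T^n,(-)_{n})$ up to natural isomorphism: since $(-)^H:\sPre(\Gsmk)\to\sPre(\smk)$ is a left adjoint it commutes with the filtered colimit, and because it is \emph{also} a right adjoint (Remark \ref{rem:fixedisright}) — in fact by Lemma \ref{lemma:HlocalrightQuillen} its composite with induction is just a coproduct of identities — it commutes with $\underline{\Hom}(T^n,-)$ in the relevant sense. Combined with the fact that $(-)^H$ detects and preserves levelwise equivalences, this gives that $(-)^H$ preserves stable equivalences, and with the characterization of stably fibrant objects \cite[Lemma 2.7\&2.8]{MSS} one sees that $(-)^H$ preserves stably fibrant objects as well. (One should also note that $(-)^H$ is compatible with the simplicial enrichment, which is needed for the last step.)

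Finally, since the stable model structures are left Bousfield localizations of the levelwise ones, it suffices to check that the left adjoint $(-)_H$ sends trivial cofibrations to stable equivalences. For a trivial cofibration $f:X\to Y$ of non-equivariant naive spectra and a stably fibrant, injective-levelwise-fibrant object $W$ on the equivariant side, the adjunction isomorphism gives a commutative square
\begin{equation*}
\begin{xy}
\xymatrix{
\sset((X')_H,W)&\sset((Y')_H,W)\ar[l]_{(f)_H^*}\\
\sset(X',(W)^H)&\sset(Y',(W)^H)\ar[l]_{f^*}\ar[u]_\cong\ar@{}[ul]|{}
}
\end{xy}
\end{equation*}
(with $(W)^H$ still stably fibrant and injective by the previous paragraph), so that $f^*$ on the bottom is a weak equivalence by the (SM7)-style characterization of stable equivalences \cite[Corollary 2.12]{MSS}, hence so is $(f)_H^*$, and therefore $(f)_H$ is a stable equivalence. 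The main obstacle I anticipate is the bookkeeping in the second paragraph: one must be careful that the prolongation of $(-)^H$ really does interact with $R^\infty$ on the nose, which ultimately rests on the strong statement of Lemma \ref{lemma:HlocalrightQuillen} that $(-)^K\circ(-)_H$ is a coproduct of identities, and on checking that the bonding-map twists (absent for the fake-suspension spectrum structure used to define stable equivalences) cause no trouble.
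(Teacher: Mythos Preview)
Your proposal is correct and rests on the same key computation as the paper (that $(-)^K$ commutes with $\underline{\Hom}(T^i,-)$ and hence with $R^\infty$, ultimately because of Lemma~\ref{lemma:HlocalrightQuillen}), but the route you take to the conclusion is organized differently. You mirror the proof of Lemma~\ref{lemma:inducednaivequillen}: first establish that the right adjoint $(-)^H$ preserves stable equivalences and stably fibrant objects, and then use the simplicial adjunction square together with the (SM7)-style characterization \cite[Corollary~2.12]{MSS} to conclude that $(-)_H$ preserves stable trivial cofibrations. The paper instead argues directly on the left adjoint: given a stable acyclic cofibration $f$ of non-equivariant $T$-spectra, it shows $f_H$ is a stable equivalence of naive $G$-spectra by checking at each $K\leq G$ that $(R^\infty(f_H)_n)^K\cong R^\infty((f_H)^K)_n$ is an $\AA^1$-local weak equivalence, which is immediate since $(f_H)^K$ is a finite coproduct of copies of $f$ by Lemma~\ref{lemma:HlocalrightQuillen}. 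The paper's route is shorter because it never needs to verify that $(-)^H$ preserves stably fibrant or injective objects, nor to invoke the (SM7) detour; your route has the virtue of following the exact template of the neighboring lemmas and making explicit the useful fact that $(-)^H$ preserves stable equivalences.
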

\begin{proof}
 Again, note that $((-)_H,(-)^H)$ is a Quillen adjunction for the levelwise model structures. Let $f:X\to Y$ be a stable acyclic cofibration of non-equivariant spectra. We have to show that $f_H$ is a stable equivalence of naive $G$-spectra or equivalently that for all $n\in\NN$ and $K\leq G$ the morphism $R^\infty(f_H)_n^K$ is an $\AA^1$-local weak equivalence. Since we have
 $$
  \underline{\Hom}(T^i,X^K) \cong \Hom_{\sPre(G)}(T^i\smash G/K_+\smash \widetilde{(~)}_+,X)
  \cong \underline{\Hom}_G(T^i,X)^K 
 $$
we see that $(R^\infty(f_H)_n)^K\cong R^\infty((f_H)^K)_n$ holds and the statement follows from Lemma \ref{lemma:HA1rightQuillen}.
\end{proof}

\subsection{Characterization of Stable Weak Equivalences}

In this section we define two fixed point functors
\begin{align}
(-)^H &:\Nspec(\sPre_.(\Gsmk),\TT_G\smash-)\to\Nspec(\sPre_.(\smk),T\smash-)\\
\Phi^H&:\Nspec(\sPre_.(\Gsmk),\TT_G\smash-)\to\Nspec(\sPre_.(\smk),T\smash-)
\end{align}
from $G$-spectra to non-equivariant spectra for any subgroup $H\leq G$. The situation is pretty much the same as in classical stable equivariant homotopy theory, where the (Lewis-May) fixed point functor $(-)^H$ has the expected left adjoint, but is rather abstract and the \emph{geometric fixed point functor} $\Phi^H$ is the levelwise extension of the unstable fixed point functor.  We show that both families of fixed-point functors detect motivic equivariant stable weak equivalences. This means that we obtain two stable versions of Proposition \ref{prop:charofA1localequivariantweq}.

\subsubsection*{The Lewis-May fixed points}
For a non-equivariant $T$-spectrum $E$ we define the push forward $E\fixed$ to a genuine $G$-spectrum by the composition 
\begin{equation}\label{eq:factorfixed}
\begin{xy}
\xymatrix@C=-5mm{
\Nspec(\sPre_.(\smk),T\smash -)\ar[rr]^{(-)\fixed}\ar[dr]_{(-)\tr}&&\Nspec(\sPre_.(\Gsmk),\TT_G\smash-)\\
&\Nspec(\sPre_.(\Gsmk),T\smash -),\ar[ur]_{i_*}&
}
\end{xy}
\end{equation}
that is $X\fixed$ is the genuine $G$-equivariant spectrum defined by
$$
(X\fixed)_n = \widetilde{\TT}_G^n\smash (X_n)\tr
$$
where $\widetilde{\TT}_G$ is the representation sphere associated to the reduced regular representation and $(X_n)\tr$ is the image of $X_n$ under the left adjoint functor $(-)\tr$ from the adjunction
\begin{equation}\label{eq:trivialGactionKANextended}
 (-)\tr:\sPre_.(\smk)\rightleftarrows\sPre_.(\Gsmk):(-)^G
\end{equation}
of left Kan extensions, cf.~\eqref{eq:trivialGaction}.
The bonding maps of $X\fixed$ are defined by
\begin{equation*}
\begin{xy}
\xymatrix{
\TT_G\smash\widetilde{\TT}_G^{\smash n}\smash (X_n)\tr\ar@{-->}[r]\ar[d]_\cong^\tau&\widetilde{\TT}_G^{\smash n+1}\smash (X_{n+1})\tr\\
\widetilde{\TT}_G\smash\widetilde{\TT}_G^{\smash n}\smash T\smash (X_n)\tr\ar[ur]_{\id\smash\sigma_n}
}
\end{xy}
\end{equation*}

Since not only $((-)\tr,(-)^G)$, but by Lemma \ref{lemma:prolongfixedareQuillen} the whole family of fixed-point adjunctions canonically prolongates to Quillen adjunctions
$$
(-)_H:\Nspec(\sPre_.(\smk),T\smash-)\rightleftarrows \Nspec(\sPre_.(\Gsmk),T\smash-):(-)^H
$$
we may compose adjoints and make the following definition.

\begin{defi}
Let $X$ be a genuine $G$-equivariant spectrum. We define the (Lewis-May) $H$-fixed points of $X$ by 
$$
X^H := (i^*X)^H.
$$ 
\end{defi}

\begin{lemma}\label{lemma:lewismayfixedpoints}
 The adjunction
 $$
 (-)\fixed:\Nspec(\sPre_.(\smk),T\smash-)\rightleftarrows\Nspec(\sPre_.(\Gsmk),\TT_G\smash-):(-)^G
 $$
 as well as the other $H$-fixed point adjunctions are Quillen adjunctions with respect to the stable model structures.
\end{lemma}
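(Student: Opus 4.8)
The plan is to exhibit $(-)\fixed$ --- and more generally the left adjoint of each Lewis-May $H$-fixed point functor --- as a composite of two functors that have already been identified as left Quillen, and then invoke the standard fact that a composite of Quillen adjunctions is again a Quillen adjunction. I do not expect any new homotopical input to be required beyond Lemmas \ref{lemma:changeofuniverseisQuillen} and \ref{lemma:prolongfixedareQuillen}.

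Concretely, for $H=G$ I would argue as follows. Recall from the factorization \eqref{eq:factorfixed} that $(-)\fixed = i_*\circ(-)\tr$, where $(-)\tr\colon\Nspec(\sPre_.(\smk),T\smash-)\to\Nspec(\sPre_.(\Gsmk),T\smash-)$ is the canonical prolongation of the trivial-action functor --- which is exactly the functor $(-)_G$ in the notation of Lemma \ref{lemma:prolongfixedareQuillen}, since $G/G$ is a point --- and $i_*$ is the change-of-universe functor into $\Nspec(\sPre_.(\Gsmk),\TT_G\smash-)$. By Lemma \ref{lemma:prolongfixedareQuillen} applied with $H=G$, the pair $((-)\tr,(-)^G)$ is a Quillen adjunction between the stable model structures on $\Nspec(\sPre_.(\smk),T\smash-)$ and $\Nspec(\sPre_.(\Gsmk),T\smash-)$; by Lemma \ref{lemma:changeofuniverseisQuillen}, the pair $(i_*,i^*)$ is a Quillen adjunction between the stable model structures on $\Nspec(\sPre_.(\Gsmk),T\smash-)$ and $\Nspec(\sPre_.(\Gsmk),\TT_G\smash-)$. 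Composing left Quillen functors, $i_*\circ(-)\tr$ is left Quillen, hence $(i_*\circ(-)\tr,\ (-)^G\circ i^*)$ is a Quillen adjunction; its right adjoint sends $E$ to $(i^*E)^G$, which is by definition $E^G$. This is precisely the adjunction $((-)\fixed,(-)^G)$ of the statement.

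For a general subgroup $H\leq G$ the same argument applies verbatim: the left adjoint of the Lewis-May fixed point functor $X\mapsto(i^*X)^H$ is $i_*\circ(-)_H$, the composite of the left Quillen functor $(-)_H$ from Lemma \ref{lemma:prolongfixedareQuillen} with the left Quillen functor $i_*$ from Lemma \ref{lemma:changeofuniverseisQuillen}, hence left Quillen for the stable model structures, with right Quillen adjoint $(i^*(-))^H$. The only steps that need attention are purely formal: checking that the prolongation of $(-)_G$ to $T$-spectra is literally the functor $(-)\tr$ occurring in \eqref{eq:factorfixed} (both are built from the same non-equivariant $T$ and its trivial-action image), and that the composite of the right adjoints $i^*$ and then $(-)^H$ agrees on the nose with the definition $X^H:=(i^*X)^H$. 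Both are immediate from the definitions, so I anticipate no genuine obstacle here --- all of the substantive content has already been absorbed into the two cited lemmas.
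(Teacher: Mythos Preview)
Your proposal is correct and follows exactly the same approach as the paper: both argue that the Lewis-May fixed point adjunctions are composites of the change-of-universe adjunction (Lemma \ref{lemma:changeofuniverseisQuillen}) and the naive fixed point adjunctions (Lemma \ref{lemma:prolongfixedareQuillen}), hence Quillen. Your write-up is simply more detailed in spelling out the identifications $(-)\tr=(-)_G$ and $(i^*(-))^H=(-)^H$.
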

\begin{proof}
 The Lewis-May fixed point adjunctions are compositions of Quillen adjunctions by Lemma \ref{lemma:changeofuniverseisQuillen} (change of universe) and Lemma \ref{lemma:prolongfixedareQuillen} (naive fixed points).
\end{proof}

\begin{proposition}\label{prop:charstableweqfixed}
 Let $f:X\to Y$ be a morphism in $\Nspec(\sPre(\Gsmk))$. Then the following are equivalent
\begin{enumerate}
 \item $f$ is a stable weak equivalence.
\item For all subgroups $H\leq G$, the morphism $f^H$ is a stable equivalence of non-equivariant spectra.
\end{enumerate}
\end{proposition}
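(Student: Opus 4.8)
The plan is to anchor both implications in the detection of stable weak equivalences by stable homotopy groups: by the lemma following the definition of the weighted stable homotopy groups, a map of $G$-spectra is a stable equivalence iff it induces isomorphisms on all the presheaves $\pi^H_{s,t}$ with $H\le G$ and $s,t\in\ZZ$, and (the $G=\{e\}$ instance of that lemma, i.e.\ the analog of \cite[Lemma 3.7]{MSS}) a map of $T$-spectra is a stable equivalence iff it induces isomorphisms on all the bigraded $\pi_{s,t}$. I would, however, establish the two directions by slightly different routes.

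For (1)$\Rightarrow$(2) I would argue as follows. By Lemma \ref{lemma:lewismayfixedpoints} the functor $(-)^H=(i^*-)^H\colon\Nspec(\sPre_.(\Gsmk),\TT_G\smash-)\to\Nspec(\sPre_.(\smk),T\smash-)$ is right Quillen for the stable model structures, so Ken Brown's lemma already gives that it preserves stable weak equivalences between stably fibrant spectra. To drop the fibrancy hypothesis I would argue as in the proofs of Lemmas \ref{lemma:changeofuniverseisQuillen} and \ref{lemma:prolongfixedareQuillen}: using that $\widetilde{\TT}_G$ is compact, both $i^*$ and the naive fixed-point functor commute with the stabilization $(\Omega'\circ\mathrm{sh})^\infty$ and carry (levelwise, resp.\ stably) fibrant spectra to such, and combining this with right Quillen-ness upgrades $(-)^H$ to a functor preserving \emph{all} stable equivalences. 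Thus $f$ a stable equivalence forces each $f^H$ to be one.

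For (2)$\Rightarrow$(1) I would unwind $X^H=(i^*X)^H$ levelwise, $(X^H)_n=\bigl(\underline{\Hom}_G(\widetilde{\TT}_G^{n},X_n)\bigr)^H$, and compute $\pi_{s,t}(X^H)$ by pushing the adjunctions $(-)_H\dashv(-)^H$ of Lemma \ref{lemma:prolongfixedareQuillen} and $\widetilde{\TT}_G^{n}\smash-\dashv\underline{\Hom}_G(\widetilde{\TT}_G^{n},-)$ through the defining colimit, obtaining
\[
\pi_{s,t}(X^H)(U)\;\cong\;\colim_{n}\,\bigl[\,\widetilde{\TT}_G^{n}\smash G/H_+\smash S^{s+n}\smash\GG_m^{t+n}\smash U_+,\;X_n\,\bigr]_G .
\]
The point is then that this is, up to a reindexing of the bidegree, precisely the colimit defining $\pi^H_{s',t'}(X)(U)$: the identification uses the splitting $\TT_G\simeq S^1\smash(\AA[G]-0)$ of Lemma \ref{lemma:TT_Gsplitting1} (together with $T\simeq S^1\smash\GG_m$) to rewrite powers of $\AA[G]-0$ in terms of $\widetilde{\TT}_G$ and $\GG_m$ after a simplicial suspension, the $\AA^1$-homotopy between the cyclic permutation of the three smash factors of $\TT_G\smash\TT_G\smash\TT_G$ and the identity to disregard reorderings of smash factors, and a cofinality comparison of the two filtered systems — the normalization discrepancy (one $\TT_G$-suspension versus one $S^1\smash\GG_m$-suspension) being harmless because $\smash\widetilde{\TT}_G$ is invertible in the stable homotopy category by Proposition \ref{prop:stabilizationcontrol}. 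Consequently, if every $f^H$ induces an isomorphism on all $\pi_{s,t}$, then $f$ induces an isomorphism on all $\pi^H_{s,t}$, whence $f$ is a stable equivalence.

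The step I expect to be the main obstacle is precisely this last comparison, and its root is that the change-of-universe functor $i^*$ is only a right adjoint: it does not visibly commute with stabilization on non-fibrant input, and one cannot transport weak equivalences through it levelwise, so the argument has to be carried out at the level of the bigraded homotopy groups. There the delicate bookkeeping of the reduced-regular-representation spheres $\widetilde{\TT}_G$ against the $\GG_m$-twists — arranging for the two colimit systems to be mutually cofinal — is the real content, and it is also what makes the precise reindexing $(s,t)\mapsto(s',t')$ above come out as claimed.
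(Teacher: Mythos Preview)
Your proposal is essentially correct, and the core computation you sketch for $(2)\Rightarrow(1)$ is exactly the argument the paper gives. The difference is organizational: the paper does not split the two implications. It writes down a single chain of isomorphisms
\[
[G/H_+\smash S^{s+j}\smash(\AA[G]-0)^{t+j},X_j]_G \;\cong\; [S^{s+j}\smash\GG_m^{t+j},(i^*X[-t])_j^H],
\]
using precisely the ingredients you list (the splitting $\TT_G\simeq T\smash\widetilde{\TT}_G\simeq S^1\smash\GG_m\smash\widetilde{\TT}_G$, the adjunction with $\Omega_{\widetilde{\TT}_G}$, and the $(-)_H\dashv(-)^H$ adjunction), and observes that after passing to the colimit this identifies $\pi^H_{s,t}(X)$ with a non-equivariant bigraded homotopy group of $X^H$ (up to a shift). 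Since this is a bijection, both implications fall out at once.

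Your separate treatment of $(1)\Rightarrow(2)$ via the right-Quillen property of $(-)^H$ together with commutation with $(\Omega'\circ\mathrm{sh})^\infty$ is valid --- the levelwise identity $\underline{\Hom}_G(\widetilde{\TT}_G^n\smash\TT_G,-)\cong\underline{\Hom}_G(T\smash\widetilde{\TT}_G^{n+1},-)$ does give $i^*\circ(\Omega'_{\TT_G}\mathrm{sh})\cong(\Omega'_T\mathrm{sh})\circ i^*$ --- but it is redundant: the very computation you perform for $(2)\Rightarrow(1)$ is already symmetric in $f$ and $f^H$, so it yields $(1)\Rightarrow(2)$ for free. The paper's route is thus shorter, and avoids the model-categorical bookkeeping you flag as the ``main obstacle''; your route has the minor advantage of isolating the forward direction as a formal consequence of Lemma~\ref{lemma:lewismayfixedpoints}, independent of any explicit sphere manipulation.
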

\begin{proof}
 The morphism $f$ is a stable equivalence of $G$-spectra if and only if it induces isomorphisms on all weighted stable homotopy groups $\pi^H_{s,t}$. We compute
\begin{align*}
[G/H\smash S^{s+j}\smash (\AA[G]-0)^{t+j},X_j]^G&\cong [G/H\smash S^{s+j}\smash (\GG_m)^{t+j}\smash\widetilde{\TT}_G^{t+j},X_i]^G\\
\intertext{where we use Lemma \ref{lemma:TT_Gsplitting1} and the splitting $\TT_G=T\smash\widetilde{\TT}_G\simeq S^1\smash\GG_m\smash\widetilde{\TT}_G$, so that we can (cofinally) replace $\AA[G]-0$ by $\GG_m\smash\widetilde{\TT}_G$ and obtain}
&\cong [G/H\smash S^{s+j}\smash (\GG_m)^{t+j},\Omega_{\widetilde{\TT}_G}^{t+j}X_j]^G\\
&\cong [G/H\smash S^{s+j}\smash (\GG_m)^{t+j},i^*X[-t]_j]^G\\
&\cong [S^{s+j}\smash (\GG_m)^{t+j},i^*X[-t]_j]^H\\
&\cong [S^{s+j}\smash (\GG_m)^{t+j},i^*X[-t]_j^H]\\
\end{align*}
So that equivalently $f^H$ induces isomorphisms on non-equivariant weighted stable homotopy groups and hence is a stable equivalence for all $H\leq G$.
\end{proof}

\subsubsection*{The geometric fixed points}

We will need the following lemma to extend the adjunction of Corollary \ref{lemma:adjunction} from unstable to stable homotopy theories.

\begin{lemma}\label{lemma:fixedpointsofregrep}
The $G$-fixed points of the regular representation sphere are canonically isomorphic to the Tate object $T$, i.e.
$$
(\TT_G)^G\cong T
$$ 
\end{lemma}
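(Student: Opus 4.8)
The plan is to compute $(\TT_G)^G$ directly from the definition $\TT_G = \AA[G]/(\AA[G]-0)$, using that the $G$-fixed points functor $(-)^G:\sPre_.(\Gsmk)\to\sPre_.(\smk)$ preserves colimits (it is a left adjoint, cf.~Remark \ref{rem:fixedisright}) and, being also a right adjoint, preserves the relevant limits appearing in the quotient. Thus $(\TT_G)^G = \AA[G]^G/(\AA[G]-0)^G$. The key geometric input is the identification of the scheme-level fixed points of the regular representation: the $G$-action on $\AA[G] = \AA^{|G|}$ permutes coordinates according to the regular representation, so the fixed locus $\AA[G]^G$ is exactly the diagonal line $\AA^1 \hookrightarrow \AA[G]$, $a \mapsto (a,a,\ldots,a)$. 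This is because a tuple $(a_g)_{g\in G}$ is fixed by every $h\in G$ precisely when $a_{hg}=a_g$ for all $g,h$, i.e.~when all coordinates are equal.

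Next I would compute $(\AA[G]-0)^G$. Since $\iota^G_{\AA[G]}$ is a closed immersion and fixed points commute with the relevant limits, $(\AA[G]-0)^G$ is the intersection of the fixed line $\AA[G]^G\cong\AA^1$ with the open complement $\AA[G]-0$; this is $\AA^1$ minus the origin, i.e.~$\GG_m$ — but more to the point, after choosing the basepoint of $\AA[G]$ at $1\in\AA[G]^G$, the pair $(\AA[G]^G, (\AA[G]-0)^G)$ is the pair $(\AA^1, \AA^1-0)$. Hence
$$
(\TT_G)^G \;=\; \AA[G]^G/(\AA[G]-0)^G \;\cong\; \AA^1/(\AA^1-0) \;=\; T,
$$
which is precisely the Tate object. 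One should be slightly careful that the quotient defining $\TT_G$ is a pushout in $\sPre_.(\Gsmk)$ and that $(-)^G$ on pointed presheaves still commutes with it; this follows since the pointed fixed-point functor is again a left adjoint (add a disjoint basepoint, apply $(-)^G$, which fixes the basepoint as it lies in every fixed locus).

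The main obstacle — really the only non-formal point — is justifying that $(-)^G$ commutes with the subscheme inclusion $\AA[G]-0 \hookrightarrow \AA[G]$ at the level of presheaves, i.e.~that $(\AA[G]-0)^G$ computed as a presheaf fixed point agrees with the scheme-theoretic fixed locus $(\AA[G]-0)^G$ of the open subscheme, and that this sits inside $\AA[G]^G$ as the open complement of $0$. For representable presheaves this is exactly the content of Theorem \ref{thm:repfixedpoints} together with the fact that the scheme-level fixed-point functor $(-)^G:\Gsmk\to\smk$ is right adjoint to $(-)\tr$ and hence preserves the pullbacks/monomorphisms defining open subschemes; on presheaves it then follows because $(-)^G$ agrees with the scheme-level functor on representables (Remark \ref{rem:fixedisright}) and the quotient is a colimit of representables. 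Everything else is the elementary linear-algebra observation that the regular representation has a one-dimensional fixed subspace spanned by the sum of the basis vectors.
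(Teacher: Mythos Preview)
Your argument is correct and follows the same overall shape as the paper's: use that $(-)^G$ commutes with the cofibre defining $\TT_G$, then identify $\AA[G]^G$ and $(\AA[G]-0)^G$. The difference is in how the key identification $\AA[G]^G\cong\AA^1$ is obtained. You argue directly with coordinates: the regular representation permutes basis vectors, so a point is fixed exactly when all its coordinates agree, giving the diagonal $\AA^1$. The paper instead decomposes $\AA[G]\cong\bigoplus_i m_iU_i$ into irreducibles and invokes Maschke's theorem to see that $U_i^G=0$ unless $U_i$ is the trivial representation, which splits off canonically via the norm element $\sum_{g\in G}g$. Your route is more elementary and entirely sufficient for this lemma; the paper's route buys compatibility with the later discussion of the reduced regular representation $\widetilde{\AA[G]}$ and the canonical splitting $\TT_G\cong T\smash\widetilde{\TT}_G$, where the representation-theoretic language is reused. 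Your treatment of the open complement and the pointed quotient is also more explicit than the paper's, which leaves those steps implicit.
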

\begin{proof}
The regular representation $\AA[G]$ decomposes into a sum $\oplus_{i=1}^nm_iU_i$ of inequivalent irreducible representations $U_i$. Let $U_1$ be the trivial representation, which splits off canonically due to the norm element $\Sigma_{g\in G} g$ in the finite group case. Then we have
$$
U_i^G\cong\begin{cases}
       \AA^1 & \text{ if } i=1\\
       0 & \text{ else,}\\
      \end{cases}
$$
because non-trivial fixed-points would give a $G$-invariant submodule and hence a $G$-invariant complement (by Maschke's Theorem in our case). 
\end{proof}
\begin{corollary}\label{cor:geomfixprolongate}
 There is a canonical natural isomorphism
$$
 (-\smash T)\circ (-)^G\rightarrow (-)^G\circ (-\smash\TT_G)
$$
of functors $\sPre_.(\Gsmk)\to\sPre_.(\smk)$ and hence a prolongation of the adjunction \eqref{eq:adjunction} to an adjunction
$$
\Phi^G:\Nspec(\sPre_.(\Gsmk),(-\smash\TT_G))\rightleftarrows\Nspec(\sPre_.(\smk),(-\smash T)).
$$
\end{corollary}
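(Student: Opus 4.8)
The heart of the matter is the natural isomorphism $(X\smash\TT_G)^G\cong X^G\smash T$; once this is available the passage to spectra is formal. First I would record that $(-)^G\colon\sPre_.(\Gsmk)\to\sPre_.(\smk)$ is \emph{both} a left and a right adjoint: it is a right adjoint by Remark \ref{rem:fixedisright} (with left adjoint $(-)\tr=(-)_G$) and a left adjoint by Corollary \ref{cor:Hlocalleftquillen}, its right adjoint being the functor $R^G$ given on $F$ by $R^G(F)(X)=F(X^G)$; both adjunctions hold on pointed presheaves since all the functors involved preserve the basepoint (cf.~also Lemma \ref{lemma:prolongfixedareQuillen}). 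Hence $(-)^G$ preserves all limits and all colimits. The smash product of pointed presheaves is built from finite products, the wedge, and the terminal object --- explicitly $X\smash Y=(X\times Y)\amalg_{X\vee Y}\ast$ --- so any functor preserving finite products and finite colimits preserves $\smash$; therefore $(X\smash Y)^G\cong X^G\smash Y^G$ naturally in $X$ and $Y$, each comparison map being the canonical one. Taking $Y=\TT_G$ and inserting Lemma \ref{lemma:fixedpointsofregrep}, which gives $(\TT_G)^G\cong T$, produces the asserted natural isomorphism relating $(-\smash T)\circ(-)^G$ and $(-)^G\circ(-\smash\TT_G)$ (the direction written in the statement is the inverse of the one dropping out directly, which is immaterial).

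For the prolongation I would use $(-)^G$ as the left adjoint, with right adjoint $R^G$, and define $\Phi^G$ on a $\TT_G$-spectrum $(X_n,\sigma_n)$ levelwise by $(\Phi^GX)_n:=X_n^G$ with structure maps $T\smash X_n^G\xrightarrow{\cong}(\TT_G\smash X_n)^G\xrightarrow{\sigma_n^G}X_{n+1}^G$, and its right adjoint on a $T$-spectrum $(Y_n,\tau_n)$ by $R^GY_n$ with structure maps $\TT_G\smash R^GY_n\to R^G(T\smash Y_n)\xrightarrow{R^G\tau_n}R^GY_{n+1}$, the first arrow being the mate of the isomorphism from the first step. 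That these two functors form an adjunction --- and that $\Phi^G$ is, by construction, the levelwise extension of the unstable $G$-fixed-points functor --- is then the standard prolongation of an adjunction along a natural isomorphism intertwining the two suspension endofunctors, carried out exactly as for $i_*$, $\ind$, and $(-)_H$ earlier in the paper (cf.~\cite{hovey2001spectra}); well-definedness on morphisms and the triangle identities are immediate from naturality of the unstable adjunction together with the compatibility isomorphism.

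The point deserving attention --- and the reason the corollary is not a triviality --- is that one cannot simply prolong the original adjunction $(-)\tr\dashv(-)^G$ along the identity: the functor $(-)\tr$ does \emph{not} intertwine the suspensions, since $(T\smash X)\tr\cong T\tr\smash X\tr$ with $T\tr$ the Tate object carrying the \emph{trivial} action rather than $\TT_G$. It is precisely the identification $(\TT_G)^G\cong T$ that repairs this, by promoting $(-)^G$ to the role of left adjoint. The main obstacle is therefore the first step: checking carefully that $(-)^G$, being simultaneously both adjoints, preserves the pushout-of-products presentation of $\smash$, and that the resulting natural isomorphism is coherent enough with the suspension endofunctors for the general prolongation machinery to apply; the remaining arguments are routine bookkeeping.
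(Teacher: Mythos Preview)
Your proposal is correct and follows essentially the same route as the paper: both arguments use that $(-)^G$ is simultaneously left and right adjoint (hence preserves smash products), combine this with Lemma~\ref{lemma:fixedpointsofregrep} to obtain the natural isomorphism, and then prolong the adjunction $((-)^G,R^G)$ levelwise with bonding maps built from this isomorphism and the counit. Your additional remark explaining why one cannot simply prolong $(-)\tr\dashv(-)^G$ directly is a nice clarification but not a departure from the paper's approach.
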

\begin{proof}
The left Kan extension $(-)^G$ from Corollary \ref{lemma:adjunction} preserves smash products since it is also right adjoint by Remark \ref{rem:fixedisright}.

Therefore, the isomorphism from the lemma above gives a natural isomorphism
$$
T\smash (-)^G \cong \TT_G^G\smash(-)^G\cong(\TT_G\smash - )^G.
$$

From this natural transformation $\tau:((-)^G\smash T)\xrightarrow{\cong} (-\smash\TT_G)^G$ one obtains a prolongation of $(-)^G$ by $(X_.)^G_n=(X_n)^G$ with bonding maps
\begin{equation*}   
\begin{xy}
\xymatrix{
T\smash X_n^G\ar[d]_{\tau_{X_n}}\ar@{-->}[r]&X_{n+1}^G\\
(\TT_G\smash X_n)^G\ar[ur]_{\sigma_n^G}&
}
\end{xy}
\end{equation*}
To prolongate the right adjoint $R^G$ of $(-)^G$ one needs a natural transformation 
$$
\TT_G\smash R^G(-)\to R_G(T\smash -),
$$
but using the adjunction and in particular the counit $\epsilon$ we obtain natural morphisms
$$
(\TT_G\smash R^G(-))^G\cong T\smash (R^G(-))^G\xrightarrow{\id\smash\epsilon}T\smash -.
$$
The prolongations are still adjoint.
\end{proof}

\begin{rem}
For a finite group $G$ the norm element $\Sigma_{g\in G} g\in \AA[G]$ gives a canonical splitting $\AA[G]\cong\AA^1\times\widetilde{\AA[G]}$ of the trivial part of the regular representation. Therefore, we have a canonical morphism from the Tate object $T$ with a trivial action to the regular representation sphere $\TT_G$ which factors for any $H\leq G$ as
\begin{equation*}   
\begin{xy}
\xymatrix{
T\ar[rr]\ar@{-->}[dr]_{c_H}&&\TT_G\\
&\TT_G^H\ar[ur].
}
\end{xy}
\end{equation*}
This canonical morphism $c_H$ gives a natural transformation
$$
T\smash (-)^H\xrightarrow{c_H}\TT_G^H\smash (-)^H\cong (\TT_G\smash -)^H
$$
which leads to a prolongation of the $H$-fixed points to a functor
\begin{equation}
\Phi^H:\Nspec(\sPre_.(\Gsmk),\TT_G\smash-)\to\Nspec(\sPre_.(\smk),T\smash-).
\end{equation}
\end{rem}

\begin{lemma}\label{lemma:geomfixedpointsnicecompatible}
 Let $X\in\psPre(\Gsmk)$ and let $Y$ be a genuine equivariant $G$-spectrum. For all subgroups $H\leq G$, we have
 $$
 \Phi^H(X\smash Y) = X^H\smash \Phi^H(Y).
 $$
 In particular, $\Phi^G$ is compatible with suspension spectra in the sense that 
 $$
 \Phi^G(\Sigma^\infty_{\TT_G}X) = \Sigma^\infty_TX^G.
 $$
\end{lemma}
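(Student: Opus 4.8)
The plan is to establish the identity one level at a time, then match the bonding maps on the two sides; the concluding assertion about suspension spectra is then a direct specialization. Recall from Corollary \ref{cor:geomfixprolongate} and the subsequent remark that $\Phi^H$ is given on objects by $\Phi^H(Y)_n=(Y_n)^H$, with bonding maps assembled from the canonical comparison map $c_H\colon T\smash(-)^H\to\TT_G^H\smash(-)^H\xrightarrow{\cong}(\TT_G\smash-)^H$ and from the $H$-fixed points $\sigma_n^H$ of the structure maps of $Y$; and recall that for $X\in\psPre(\Gsmk)$ the smash product $X\smash Y$ is formed levelwise, $(X\smash Y)_n=X\smash Y_n$, with bonding maps obtained from those of $Y$ by inserting the twist that moves the $\TT_G$-factor past $X$.

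The first step is to record that the unstable functor $(-)^H\colon\sPre_.(\Gsmk)\to\sPre_.(\smk)$ preserves smash products. By Remark \ref{rem:fixedisright} the functor $(-)^H$ is simultaneously a left and a right adjoint on (unpointed) presheaves; as a right adjoint it preserves the terminal object and finite products, and as a left adjoint it preserves the pushouts defining $X\vee Z$ and $X\smash Z=(X\times Z)/(X\vee Z)$. Hence $(X\smash Z)^H\cong X^H\smash Z^H$, naturally in the pointed presheaves $X$ and $Z$. Applying this in each level yields a natural levelwise isomorphism $\Phi^H(X\smash Y)_n=(X\smash Y_n)^H\cong X^H\smash (Y_n)^H=(X^H\smash\Phi^H(Y))_n$.

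It then remains to see that this levelwise isomorphism is compatible with the bonding maps. The level-$n$ bonding map of $\Phi^H(X\smash Y)$ first applies $c_H$, then the $H$-fixed points of the twist moving $\TT_G$ past $X$, then the $H$-fixed points of $\id_X\smash\sigma_n$; the level-$n$ bonding map of $X^H\smash\Phi^H(Y)$ first applies the twist moving $T$ past $X^H$, then $\id_{X^H}\smash c_H$, then $\id_{X^H}\smash\sigma_n^H$. Under the identifications of the previous paragraph these two composites agree, by the naturality of $c_H$ (together with the fact that $c_H$ is built from the sphere-level map $T\to\TT_G^H$ and the monoidality of $(-)^H$) and the coherence of the symmetry isomorphisms in $\sPre_.$. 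This bookkeeping with the twists and with $c_H$ is the only slightly delicate point, but it carries no conceptual content and is a routine diagram chase.

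For the concluding assertion I would specialize. Directly, $\Phi^G(\Sigma^\infty_{\TT_G}X)_n=(\TT_G^{\smash n}\smash X)^G\cong(\TT_G^G)^{\smash n}\smash X^G\cong T^{\smash n}\smash X^G=(\Sigma^\infty_T X^G)_n$, where the middle isomorphism again uses that $(-)^G$ commutes with smash products and the next one is Lemma \ref{lemma:fixedpointsofregrep}; after these identifications the bonding maps on both sides are identities, once more because $c_G$ is induced by the canonical splitting of $\AA[G]$. Alternatively, write $\Sigma^\infty_{\TT_G}X\cong X\smash\Sigma^\infty_{\TT_G}S^0$, apply the general formula just proved, and note $\Phi^G(\Sigma^\infty_{\TT_G}S^0)=\Sigma^\infty_TS^0$ by Lemma \ref{lemma:fixedpointsofregrep}.
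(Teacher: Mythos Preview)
Your proposal is correct and follows essentially the same approach as the paper: both arguments reduce to the observation that $\Phi^H$ is the levelwise prolongation of the space-level functor $(-)^H$, that smashing with a space is also levelwise, and that $(-)^H$ preserves smash products (the paper invokes this compatibility in the proof of Corollary~\ref{cor:geomfixprolongate} using exactly your left-and-right-adjoint argument from Remark~\ref{rem:fixedisright}); for the suspension-spectrum statement both appeal to Lemma~\ref{lemma:fixedpointsofregrep}. Your write-up is simply more explicit about the bonding-map verification, which the paper leaves implicit.
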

\begin{proof}
The geometric fixed points functor $\Phi^H$ is a prolongation and smashing with a space is defined as a levelwise smash product, thus the first statement follows from the compatibility of the space level fixed point functors with smash products. For the second statement additionally use Lemma \ref{lemma:fixedpointsofregrep}.
\end{proof}

One adds a disjoint basepoint to the unique morphism $EG\to\ast$ and then takes the homotopy cofiber of the suspension spectra in $\Nspec(\Gsmk)$ to acquire the cofiber sequence
\begin{equation}\label{eq:fundamcofsequence}
EG_+\to S^0\xrightarrow{p} \widetilde{EG},
\end{equation}
which is of fundamental importance in equivariant homotopy theory.

\begin{lemma}\label{lemma:EGcontractunrecuded}
 The unreduced suspension $\widetilde{EG}$ defined by the cofiber sequence
 $$
 EG_+\to S^0\to \widetilde{EG}
 $$
 is non-equivariantly contractible.
\end{lemma}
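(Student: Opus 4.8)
The plan is to compute the non-equivariant underlying object $(\widetilde{EG})^e$ directly from the defining cofiber sequence. Applying the forgetful functor $(-)^e$ to the cofiber sequence $EG_+\to S^0\to\widetilde{EG}$ yields, since $(-)^e$ is a left Quillen functor that preserves homotopy cofiber sequences (it commutes with the relevant colimits and with smashing with $S^1$), a cofiber sequence $(EG_+)^e\to S^0\to(\widetilde{EG})^e$ of non-equivariant spectra. So it suffices to show that the map $(EG_+)^e\to S^0$ is a non-equivariant stable weak equivalence; then the homotopy cofiber $(\widetilde{EG})^e$ is contractible.

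To see this, I would use that $EG$ is a non-equivariantly contractible $G$-space with free $G$-action modelling the classifying construction: non-equivariantly $EG^e\simeq \ast$, so that $(EG_+)^e\simeq \ast_+ = S^0$ at the space level, and this equivalence is compatible with the structure maps so that it prolongs to a stable equivalence $\Sigma^\infty_T(EG_+)^e\xrightarrow{\sim}\Sigma^\infty_T S^0 = S^0$ of spectra. The point is simply that forgetting the $G$-action turns the contractible space $EG$ into a contractible object, hence $EG_+$ into an object non-equivariantly equivalent to $S^0$, and the map in \eqref{eq:fundamcofsequence} is exactly the one induced by $EG\to\ast$. Combining with the previous paragraph, the cofiber $(\widetilde{EG})^e$ sits in a cofiber sequence $S^0\xrightarrow{\sim}S^0\to(\widetilde{EG})^e$ in which the first map is a stable equivalence, and therefore $(\widetilde{EG})^e\simeq\ast$.

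The main obstacle is making precise what $EG$ means in this motivic equivariant context and verifying that $EG^e$ is indeed non-equivariantly $\AA^1$-contractible; once that input is in hand, the argument is the formal consequence of exactness of the cofiber sequence under $(-)^e$ recorded above. I would therefore spend the bulk of the proof pinning down the model of $EG$ being used (a cofibrant replacement of $\ast$ in the class of free $G$-objects, or an explicit colimit model such as the infinite join of copies of $G$), and checking that the forgetful functor sends it to an $\AA^1$-weakly contractible non-equivariant presheaf; the rest follows since $(-)^e$ preserves cofiber sequences and stable equivalences.
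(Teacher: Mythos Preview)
Your proposal is correct and follows essentially the same approach as the paper: both use that $EG$ is non-equivariantly contractible, so $EG_+\to S^0$ is a non-equivariant stable equivalence, and then deduce that the cofiber $\widetilde{EG}$ is non-equivariantly trivial. The paper phrases the last step via the long exact sequence \eqref{eq:lesofhomotopygroups} of non-equivariant weighted stable homotopy groups rather than by explicitly applying $(-)^e$ to the cofiber sequence, but this is only a cosmetic difference; the paper also simply asserts the non-equivariant contractibility of $EG$ without dwelling on a particular model.
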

\begin{proof}
 The space $EG$ is non-equivariantly contractible, hence the morphism of spectra $EG_+\to S^0$ is a stable weak equivalence of the underlying non-equivariant spectra. Applying \cite[Lemma 3.7]{MSS} twice to the long exact sequence of underlying $T$ spectra
 $$
 \ldots\to\pi_{t+1,s}(\widetilde{EG})\to\pi_{t,s}(EG_+)\xrightarrow{\cong}\pi_{t,s}(S^0)\to\pi_{t,s}(\widetilde{EG})\to\ldots
 $$
 we see that $\widetilde{EG}$ is contractible.
\end{proof}

\begin{lemma}\label{lemma:EGsmashnoneqeqiseq}
 Let $f:X\to Y$ be a non-equivariant stable equivalence of equivariant motivic spectra. Then 
 $$
 \id\smash f:EG_+\smash X\to EG_+\smash Y
 $$
 is an equivariant stable equivalence.
\end{lemma}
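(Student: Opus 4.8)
The plan is to reduce the claim to a statement about equivariant weak equivalences that can be checked on fixed points, using Proposition \ref{prop:charstableweqfixed}, and then to exploit the cofiber sequence \eqref{eq:fundamcofsequence} together with Lemma \ref{lemma:geomfixedpointsnicecompatible} to kill the contributions away from the trivial subgroup. First I would fit $\id\smash f$ into the two cofiber sequences obtained by smashing $EG_+\to S^0\to\widetilde{EG}$ with $X$ and with $Y$, producing a map of cofiber sequences
\begin{equation*}
\begin{xy}
\xymatrix{
EG_+\smash X\ar[r]\ar[d]_{\id\smash f}&X\ar[r]\ar[d]^{f}&\widetilde{EG}\smash X\ar[d]^{\id\smash f}\\
EG_+\smash Y\ar[r]&Y\ar[r]&\widetilde{EG}\smash Y.
}
\end{xy}
\end{equation*}
By the long exact sequence \eqref{eq:lesofhomotogygroups} it suffices to understand the outer two vertical maps, and the middle one only through the maps of $\pi^H_{s,t}$ it induces; but a more efficient route is to show directly that $\id\smash f$ between the $\widetilde{EG}$-smashed objects is an equivariant stable equivalence together with the (easier) observation about the $EG_+$-smashed objects, and then conclude.

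The key point is to compute the fixed points of $\widetilde{EG}\smash X$. By construction of the fixed-point functors and Proposition \ref{prop:charstableweqfixed}, $\id\smash f\colon EG_+\smash X\to EG_+\smash Y$ is an equivariant stable equivalence if and only if $(\id\smash f)^H$ is a non-equivariant stable equivalence for every $H\leq G$. For $H=e$ this is exactly the hypothesis that $f$ is a non-equivariant stable equivalence (smashing with $EG_+$, which is non-equivariantly a suspension spectrum of a contractible space plus basepoint, does not change the underlying stable type — indeed $EG_+\to S^0$ is a non-equivariant stable equivalence by Lemma \ref{lemma:EGcontractunrecuded}, so $EG_+\smash X\to X$ and $EG_+\smash Y\to Y$ are non-equivariant stable equivalences and the claim for $H=e$ follows by two-out-of-three). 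For a nontrivial $H$, I would argue that the relevant fixed-point spectrum receives no nontrivial contribution: using that $EG$ has no $H$-fixed points for $H\neq e$ (it is a model for the classifying space of the family of proper subgroups, or more elementarily $EG^H=\emptyset$), one gets that $(EG_+\smash X)^H$ is contractible, hence $(\id\smash f)^H$ is trivially a stable equivalence. Therefore all fixed points of $\id\smash f$ are stable equivalences and Proposition \ref{prop:charstableweqfixed} finishes the argument.

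I expect the main obstacle to be making precise the claim that the higher fixed points $(EG_+\smash X)^H$ vanish (up to stable equivalence) when $H\neq e$. The naive expectation ``$EG^H=\emptyset$ so $(EG_+\smash X)^H$ is a point'' is correct for the geometric fixed points $\Phi^H$ by Lemma \ref{lemma:geomfixedpointsnicecompatible} (since $\Phi^H(EG_+\smash X)=(EG_+)^H\smash\Phi^H(X)=\ast$), but the Lewis--May fixed points $(-)^H$ used in Proposition \ref{prop:charstableweqfixed} are more delicate; one must invoke the isotropy separation machinery — concretely, that $\widetilde{EG}\smash(-)$ inverts the maps detected by $\Phi^H$ for $H\neq e$ while $EG_+\smash(-)$ has the complementary effect — or else verify directly from the definition $X^H=(i^*X)^H$ and the change-of-universe computation in Lemma \ref{lemma:naiveunit} that smashing with $EG_+$ forces the $H$-fixed points for $H\neq e$ to be stably trivial. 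Once this vanishing is in hand, the rest is a routine application of Proposition \ref{prop:charstableweqfixed} and two-out-of-three.
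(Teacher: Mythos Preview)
Your plan has a genuine gap, and you have in fact correctly located it yourself: the assertion that $(EG_+\smash X)^H$ is contractible for $H\neq e$ is \emph{false} for Lewis--May fixed points. Already classically (and the motivic situation is no better) one has, for instance, $(\Sigma^\infty EG_+)^G\simeq \Sigma^\infty BG_+$, which is far from contractible; more generally $(EG_+\smash X)^G$ computes a homotopy-orbit type object, not a point. So the step ``$EG^H=\emptyset$, hence $(EG_+\smash X)^H\simeq\ast$'' only goes through for the geometric fixed points $\Phi^H$ (as you note, via Lemma~\ref{lemma:geomfixedpointsnicecompatible}), not for the $(-)^H$ appearing in Proposition~\ref{prop:charstableweqfixed}. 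Your suggested escape routes do not close the gap: invoking the isotropy-separation comparison between $\Phi^H$ and $(-)^H$ amounts to using Proposition~\ref{prop:charstableweqgeometric}, whose proof in the paper explicitly relies on the argument of the present lemma, so that would be circular; and a direct computation from $X^H=(i^*X)^H$ together with Lemma~\ref{lemma:naiveunit} will not produce contractibility either, because it simply is not contractible.

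The paper's proof avoids Lewis--May fixed points of $EG_+\smash X$ altogether. It passes to the homotopy cofiber $Z$ of $f$, which is non-equivariantly contractible, takes a stably fibrant replacement $Z'$ so that each level $Z'_n$ is non-equivariantly ($\AA^1$-locally) contractible, and then uses the elementary fact that $EG$ is built from free $G$-cells: smashing a levelwise non-equivariantly contractible object with a free $G$-space produces something levelwise \emph{equivariantly} contractible. Hence $EG_+\smash Z'\simeq\ast$ equivariantly, and therefore $EG_+\smash Z\simeq\ast$, which gives the conclusion via the cofiber sequence. The moral is that the ``freeness'' of $EG$ should be exploited at the space/level stage, before any stable fixed-point functor is applied; your approach postpones this to the stable level, where the information has already been mixed into the Lewis--May fixed points and is much harder to extract.
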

\begin{proof}
 We consider the cofiber sequence
$$
X\xrightarrow{f} Y \to hocofib(f)=:Z
$$
and assume that $Z$ is non-equivariantly contractible. Let $Z\to Z'$ be a stably fibrant replacement in $\Nspec(\psPre(\Gsmk),\TT_G\smash-)$. Then $Z'$ is levelwise non-equivariantly contractible and $EG_+\smash Z$ is stably equivalent to $EG_+\smash Z'$. But $EG_+\smash Z'$ is even equivariantly levelwise contractible and hence so is $EG_+\smash Z$.
\end{proof}

For a comparison of geometric and Lewis-May fixed points, we introduce the following generalization of $EG$. A \emph{family of subgroups} of $G$ is defined to be a set $\mathcal F$ of subgroups of $G$, such that $\mathcal F$ is closed under taking subgroups and conjugation. Given such a family $\mathcal F$, there might exist a $G$-representation $V=V_{\mathcal F}$ with the property that
\begin{equation}\label{eq:famofsubgroups}
 V^H\text{ is }\begin{cases}
         >0 & \text{ if } H\in\mathcal F,\\
         0 &\text{ if } H \not\in \mathcal F.
        \end{cases}
\end{equation}
On the other hand, given a $G$-representation $V$, the set of subgroups with defining property \eqref{eq:famofsubgroups} is a family of subgroups. We consider the cofiber sequence
$$
(V-0)_+\to S^0\to S^V
$$
and observe that the fixed points $(S^V)^H$ are computed by the diagram
\begin{equation*}
\begin{xy}
\xymatrix{
(V-0)^H\ar[r]\ar[d]&V^H\ar[d]\\
\ast\ar[r]&(S^V)^H.
}
\end{xy}
\end{equation*}
Thus, $(S^V)^H$ is $S^0$ for subgroups $H$ which are not in $\mathcal F$ and otherwise $(S^V)^H$ is equal to $S^{2r,r}$, for some $r>0$. Denote by $E\mathcal F$ the infinite smash product $\colim_{j\geq 0} (V-0)^{\smash j}$ and by $\widetilde{E\mathcal F}$ the infinite smash product $\colim_{j\geq 0} (S^V)^{\smash j}$. It follows that $\widetilde{E\mathcal F}^H$ is $S^0$ if $H$ is not in $\mathcal F$. For a subgroup $H\in\mathcal F$ the $H$-fixed points are an infinite smash of positive dimensional spheres and therefore contractible. In particular, we note that for the family $\mathcal P$ of all proper subgroups of $G$, the reduced regular representation gives an adequate representation and the fixed points of the unreduced suspension $\widetilde{E\mathcal P}$ are given by
$$
\widetilde{E\mathcal P}^H\simeq\begin{cases}
         \ast & \text{ if } H<G\\
         S^0 &\text{ if } H =G.
        \end{cases}
$$

\begin{lemma}\label{lemma:fixedvsgeometric}
 The evaluation morphism
 $$
 (\widetilde{E\mathcal P}\smash X)^G\to \Phi^G(X)
 $$
 is a levelwise equivalence of non-equivariant spectra.
\end{lemma}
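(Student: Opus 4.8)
The plan is to prove the statement one level at a time, reducing each level to an unstable computation of fixed points of an internal function object out of a representation sphere. Both sides are $T$-spectra defined levelwise: on the one hand $\Phi^G(X)_n=(X_n)^G$, and on the other, by the definitions of the Lewis--May fixed points and of $i^*$, $\bigl((\widetilde{E\mathcal P}\smash X)^G\bigr)_n=\bigl(i^*(\widetilde{E\mathcal P}\smash X)\bigr)^G_n=\bigl(\underline{\Hom}_G(\widetilde{\TT}_G^{\,n},\widetilde{E\mathcal P}\smash X_n)\bigr)^G$, where we used that $\widetilde{E\mathcal P}$ is a space so that $(\widetilde{E\mathcal P}\smash X)_n=\widetilde{E\mathcal P}\smash X_n$. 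In level $n$ the evaluation morphism is the composite of the evaluation at the canonical $G$-fixed point $S^0\cong(\widetilde{\TT}_G^{\,n})^G\hookrightarrow\widetilde{\TT}_G^{\,n}$, namely $\bigl(\underline{\Hom}_G(\widetilde{\TT}_G^{\,n},\widetilde{E\mathcal P}\smash X_n)\bigr)^G\to(\widetilde{E\mathcal P}\smash X_n)^G$, with a natural isomorphism $(\widetilde{E\mathcal P}\smash X_n)^G\cong(X_n)^G$. So it suffices to fix $n$, exhibit that isomorphism, and show the evaluation is an $\AA^1$-local weak equivalence.

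First I record two facts about $Y:=\widetilde{E\mathcal P}\smash X_n$. By Remark \ref{rem:fixedisright} every $(-)^H$ is at once a left and a right adjoint, hence preserves all small limits and colimits; combined with $(V-0)^H\cong V^H-0$ this yields $(S^V\smash W)^H\cong S^{V^H}\smash W^H$ for any $G$-representation $V$ and any $W$. Writing $\widetilde{E\mathcal P}=\colim_j(\widetilde{\TT}_G)^{\smash j}$ with $\widetilde{\TT}_G=S^{\widetilde{\AA[G]}}$ and commuting $(-)^H$ past the filtered colimit, we get $Y^H\cong\colim_j S^{\,j\,\widetilde{\AA[G]}^H}\smash X_n^H$. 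For $H=G$ one has $\widetilde{\AA[G]}^G=0$, so this colimit is constant equal to $X_n^G$, which supplies the isomorphism $(\widetilde{E\mathcal P}\smash X_n)^G\cong(X_n)^G$. For a proper $H<G$ one has $\widetilde{\AA[G]}^H\neq 0$, so $\colim_j S^{\,j\,\widetilde{\AA[G]}^H}$ is an infinite smash of positive-dimensional spheres and therefore $\AA^1$-contractible, as observed just before this lemma; hence $Y^H\simeq_{\AA^1}\ast$. Thus $Y$ is \emph{concentrated at $G$} in the sense that all of its proper fixed points are $\AA^1$-contractible.

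The heart of the proof is then the claim that, for a $G$-space $Y$ with $Y^H\simeq_{\AA^1}\ast$ for every proper $H<G$, the evaluation map $\bigl(\underline{\Hom}_G(A,Y)\bigr)^G\to\underline{\Hom}(A^G,Y^G)$ is an $\AA^1$-local weak equivalence for $A=\widetilde{\TT}_G^{\,n}$; applied to our $Y$ this gives $\bigl((\widetilde{E\mathcal P}\smash X)^G\bigr)_n\simeq_{\AA^1}\underline{\Hom}\bigl((\widetilde{\TT}_G^{\,n})^G,Y^G\bigr)=\underline{\Hom}(S^0,Y^G)=Y^G=X_n^G=\Phi^G(X)_n$, and a diagram chase identifies this composite with the evaluation morphism of the statement, finishing the proof. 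Both functors $A\mapsto\bigl(\underline{\Hom}_G(A,Y)\bigr)^G$ and $A\mapsto\underline{\Hom}(A^G,Y^G)$ carry homotopy colimits in $A$ to homotopy limits — the first because $\underline{\Hom}_G(-,Y)$ is contravariantly continuous and $(-)^G$, being a right adjoint, preserves homotopy limits; the second because $(-)^G$, being a \emph{left} adjoint, sends homotopy colimits to homotopy colimits — so it suffices to verify the comparison on the orbit cells $A=G/H_+\smash S^{p,q}$, using that the motivic representation sphere $\widetilde{\TT}_G^{\,n}=S^{\,n\,\widetilde{\AA[G]}}$ is $G$-cellular with a single $G$-fixed $0$-cell (the image of the origin) and all further cells of proper isotropy. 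For $H=G$ both sides are $\Omega^{p,q}(Y^G)$. For $H<G$ the left side is $\Omega^{p,q}\bigl((\res_HY)^H\bigr)=\Omega^{p,q}(Y^H)\simeq_{\AA^1}\ast$ by the induction--restriction adjunction and the hypothesis on $Y$, and the right side is $\underline{\Hom}\bigl((G/H)^G_+\smash S^{p,q},Y^G\bigr)=\underline{\Hom}(\ast,Y^G)=\ast$ because $(G/H)^G=\emptyset$. This establishes the claim.

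The step I expect to be the main obstacle is exactly this last one: producing a $G$-cell decomposition of the motivic representation sphere $\widetilde{\TT}_G^{\,n}$ with the stated control on the isotropy of its cells, and then legitimately propagating the cellwise equivalence along homotopy colimits inside the simplicial presheaf model, interchanging $(-)^G$, the internal hom $\underline{\Hom}_G$, homotopy colimits, and the $\AA^1$-localization. A secondary point requiring care is that the maps defined level by level assemble into an honest morphism of $T$-spectra, compatible with the bonding maps and agreeing with the evaluation morphism in the statement.
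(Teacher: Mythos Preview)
Your argument is essentially the same as the paper's: you unwind level $n$ via the definition of Lewis--May fixed points to $\bigl(\underline{\Hom}_G(\widetilde{\TT}_G^{\,n},\widetilde{E\mathcal P}\smash X_n)\bigr)^G$, invoke a $G$-cell decomposition of $\widetilde{\TT}_G^{\,n}$ to turn this into a homotopy limit over orbit cells, kill the proper-isotropy factors using $(\widetilde{E\mathcal P})^H\simeq\ast$ for $H<G$, and identify the remaining $H=G$ term with $X_n^G$. The paper's proof is terser and simply asserts the cell decomposition of $\widetilde{\TT}_G^{\,n}$ (with $(p_G,q_G)=(0,0)$), exactly the step you flagged as the main obstacle; so your caution there is well placed, but neither your proof nor the paper's supplies more detail on that point.
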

\begin{proof}
We compute that
\begin{align*}
  (\widetilde{E\mathcal P}\smash X)^G_n &= \underline{\Hom}_G(\widetilde{\TT}_G^n,\widetilde{E\mathcal P}\smash X_n)^G\\
  &\cong \sset_G(\widetilde{\TT}_G^n\smash (-)\tr,\widetilde{E\mathcal P}\smash X_n)\\
  \intertext{where $\widetilde{\TT}_G^n$ is a homotopy colimit of equivariant cells $G/H_+\smash S^{p_H,q_H}$ and therefore}
  &\cong \holim_{H\leq G}\sset_G(G/H_+\smash S^{p_H,q_H}\smash (-)\tr,\widetilde{E\mathcal P}\smash X_n)\\
  &\cong \holim_{H\leq G}\sset(S^{p_H,q_H}\smash (-)\tr,\widetilde{E\mathcal P}^H\smash X_n^H)\\
  \intertext{All the non-initial holim-factors corresponding to proper subgroups are contractible and since $\widetilde{\AA[G]}$ has no trivial subrepresentation we have $(p_G,q_G)=(0,0)$, so that}
  &\simeq \sset(S^{0}\smash (-)\tr,\widetilde{E\mathcal P}^G\smash X_n^G)\cong X_n^G.
\end{align*}
\end{proof}

We are now ready for a characterization of equivariant stable equivalences by their geometric fixed points. 

\begin{proposition}\label{prop:charstableweqgeometric}
 Let $f:X\to Y$ be a morphism in $\Nspec(\sPre_.(\Gsmk))$. Then the following are equivalent
\begin{enumerate}
\item $f$ is a stable weak equivalence.
\item For all subgroups $H\leq G$, the morphism $\Phi^H(f)$ is a stable equivalence of non-equivariant spectra.
\end{enumerate}
\end{proposition}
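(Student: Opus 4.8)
The plan is to deduce both implications from the Lewis--May characterisation, Proposition \ref{prop:charstableweqfixed}, by induction on $|G|$, the workhorse being the isotropy separation cofiber sequence
$$
E\mathcal P_+\smash X\to X\to\widetilde{E\mathcal P}\smash X
$$
obtained by smashing $E\mathcal P_+\to S^0\to\widetilde{E\mathcal P}$ with $X$, together with the identification $(\widetilde{E\mathcal P}\smash X)^G\simeq\Phi^G X$ of Lemma \ref{lemma:fixedvsgeometric} and the fixed-point computation for $\widetilde{E\mathcal P}$ recalled just before that lemma. When $|G|=1$ both conditions simply say that $f$ is a non-equivariant stable equivalence, so there is nothing to prove; assume $|G|>1$ and that the Proposition holds for the analogously defined spectrum categories over every proper subgroup of $G$.

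For the implication (1)$\Rightarrow$(2) I would argue that each $\Phi^H$ preserves stable weak equivalences. The functor $\Phi^H$ is the prolongation of the space-level functor $(-)^H\colon\sPre_.(\Gsmk)\to\sPre_.(\smk)$ along the natural transformation $c_H\colon T\smash(-)^H\to(\TT_G\smash-)^H$; since $(-)^H$ is a left adjoint (Remark \ref{rem:fixedisright}) so is its prolongation --- this is made explicit for $H=G$ in Corollary \ref{cor:geomfixprolongate}, and the analogous right adjoint exists for all $H$ --- and the verification that $\Phi^H$ is left Quillen for the stable model structures runs dual to the argument of Lemma \ref{lemma:lewismayfixedpoints}. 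Ken Brown's Lemma then yields that $\Phi^H$ preserves stable equivalences, which is (2).

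The substantial direction is (2)$\Rightarrow$(1). Assume $\Phi^H f$ is a stable equivalence for all $H\leq G$. Mapping $f$ over the isotropy separation cofiber sequences for $X$ and for $Y$, and using that the stable model category of Theorem \ref{thm:stableeqms} is proper and stable, it suffices to show that $E\mathcal P_+\smash f$ and $\widetilde{E\mathcal P}\smash f$ are equivariant stable equivalences, for then $f$ is one by the five lemma in the triangulated stable homotopy category. For $\widetilde{E\mathcal P}\smash f$: its $G$-fixed points agree with $\Phi^G f$ up to levelwise equivalence by the naturality of Lemma \ref{lemma:fixedvsgeometric}, hence form a stable equivalence; and since $\widetilde{E\mathcal P}^K\simeq\ast$ for every $K<G$, the restriction of $\widetilde{E\mathcal P}\smash X$ to any proper subgroup $H$ is levelwise equivariantly weakly contractible (using that space-level fixed points commute with smash), so all Lewis--May fixed points of $\widetilde{E\mathcal P}\smash f$ at subgroups $H<G$ are stable equivalences as well; by Proposition \ref{prop:charstableweqfixed} the map $\widetilde{E\mathcal P}\smash f$ is an equivariant stable equivalence. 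For $E\mathcal P_+\smash f$: the $G$-space $E\mathcal P$ is built, up to equivariant weak equivalence, from cells induced from proper subgroups, so $E\mathcal P_+\smash X$ is a homotopy colimit of spectra induced from proper subgroups of $G$; since homotopy colimits and induction from a proper subgroup $H$ preserve stable equivalences, it is enough that $\res^G_H f$ be an $H$-equivariant stable equivalence for every proper $H$, and by the inductive hypothesis this holds once $\Phi^K(\res^G_H f)$ is a stable equivalence for all $K\leq H$. Finally, geometric fixed points commute with restriction along $H\hookrightarrow G$ for subgroups containing $K$, whence $\Phi^K(\res^G_H f)\cong\Phi^K f$ is a stable equivalence by hypothesis.

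I expect the main obstacle to be the bookkeeping hidden in the two transitivity statements invoked in the last step --- the natural isomorphism $\Phi^K\circ\res^G_H\cong\Phi^K$ and the factorisation of the Lewis--May fixed point functor through $\res^G_H$ --- as well as in the passage to genuine induction $\ind^G_H$; all of these require comparing the various suspension coordinates $\TT_G$, $\TT_H$ and $T$, i.e.\ a change-of-universe argument in the spirit of Lemma \ref{lemma:changeofuniverseisQuillen}, and it is there, rather than in any homotopy-theoretic subtlety, that the real work lies. A secondary point needing care is that smashing with $\widetilde{E\mathcal P}$ and with $E\mathcal P_+$ and forming the relevant homotopy colimits are homotopically well behaved; this follows as in Jardine's treatment from the properness of the stable structure and the compactness of $\TT_G$.
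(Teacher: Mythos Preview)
Your overall strategy coincides with the paper's: both directions are deduced from Proposition~\ref{prop:charstableweqfixed}, the isotropy separation sequence $E\mathcal P_+\to S^0\to\widetilde{E\mathcal P}$, Lemma~\ref{lemma:fixedvsgeometric}, and induction on $|G|$. In the direction (2)$\Rightarrow$(1) your argument and the paper's are only superficially reorganised: the paper applies $(-)^G$ to the isotropy separation diagram and reads off that $f^G$ is a stable equivalence (the $f^H$ for proper $H$ being known by induction), whereas you run the five lemma on the equivariant level to conclude for $f$ itself; your cell-induction treatment of $E\mathcal P_+\smash f$ is equivalent to the paper's appeal to the analogue of Lemma~\ref{lemma:EGsmashnoneqeqiseq}.

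The one place where the paper takes a cleaner route is (1)$\Rightarrow$(2). Rather than arguing that each $\Phi^H$ is left Quillen, the paper uses Lemma~\ref{lemma:fixedvsgeometric} directly: smashing with $\widetilde{E\mathcal P_H}$ is left Quillen, Proposition~\ref{prop:charstableweqfixed} then gives that $(\widetilde{E\mathcal P_H}\smash f)^H$ is a non-equivariant stable equivalence, and Lemma~\ref{lemma:fixedvsgeometric} identifies this with $\Phi^H(f)$. Your alternative---prolongating the space-level adjunction and invoking Ken Brown---is morally fine, but your pointer to a ``dual'' of Lemma~\ref{lemma:lewismayfixedpoints} and to Corollary~\ref{cor:geomfixprolongate} glosses over a genuine asymmetry: for proper $H$ the structural map $c_H\colon T\to\TT_G^H$ is \emph{not} an isomorphism, so the right-adjoint construction of Corollary~\ref{cor:geomfixprolongate} and the ensuing left-Quillen verification do not go through verbatim. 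This is exactly the change-of-universe bookkeeping you flag at the end, and the paper's approach simply sidesteps it for this implication.
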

\begin{proof}
Assume that $f$ is a stable equivalence. Let $\mathcal P_H$ be the family of all proper subgroups of $H$. When applying the left Quillen functor $\widetilde{E\mathcal P_H}\smash -$ we still have a stable equivalence and by Proposition \ref{prop:charstableweqfixed} for all subgroups $H$ of $G$ we thus have a non-equivariant stable equivalence
$$
(\widetilde{E\mathcal P_H}\smash f)^H:(\widetilde{E\mathcal P_H}\smash X)^H\to(\widetilde{E\mathcal P_H}\smash Y)^H
$$
which implies by Lemma \ref{lemma:fixedvsgeometric} that $\Phi^H(f)$ is a stable equivalence.

Conversely, assume that for all subgroups $H$ of $G$ the map $\Phi^H(f)$ on geometric fixed points is a stable equivalence. We proceed by induction on the order of $G$. For $|G|=1$ there is nothing to show, since $\Phi^G$ is basically the identity then. So let $G$ be non-trivial and assume the claim to be true for all proper subgroups of $G$. So $\res^G_Hf$ is an equivariant stable equivalence for all proper subgroups $H$ of $G$ and by Proposition \ref{prop:charstableweqfixed} this implies that for these subgroups also $f^H$ is a non-equivariant stable equivalence. We are going to show that $f^G$ is a stable equivalence as well. Smashing $f$ with the norm sequence \eqref{eq:fundamcofsequence} for $E\mathcal P$ we obtain a diagram
\begin{equation*}
\begin{xy}
\xymatrix{
E\mathcal P\smash X\ar[d]\ar[r] & X\ar[r]\ar[d]^f & \widetilde{E\mathcal P}\smash X\ar[d]\\
E\mathcal P\smash Y\ar[r]& Y\ar[r] & \widetilde{E\mathcal P}\smash Y
}
\end{xy}
\end{equation*}
where $E\mathcal P_+\smash f$ is a stable equivalence by an argument completely analogous to the proof of Lemma \ref{lemma:EGsmashnoneqeqiseq}. We may apply $(-)^G$ to the whole diagram above and using Lemma \ref{lemma:fixedvsgeometric} we find that $f^G$ is surrounded by stable equivalences in the diagram
\begin{equation*}
\begin{xy}
\xymatrix{
(E\mathcal P\smash X)^G\ar[d]^\sim\ar[r] & X^G\ar[r]\ar[d]^{f^G} & (\widetilde{E\mathcal P}\smash X)^G\ar[d]^\sim\\
(E\mathcal P\smash Y)^G\ar[r]& Y^G\ar[r] & (\widetilde{E\mathcal P}\smash Y)^G
}
\end{xy}
\end{equation*}
with rows cofiber sequences. 
Therefore, $f^G$ is a stable equivalence and we conclude again by using Proposition \ref{prop:charstableweqfixed} that $f$ itself is a stable equivalence.
\end{proof}

\section{Representability of Equivariant Algebraic K-Theory}

This subsection starts with a recollection of equivariant algebraic K-theory following Thomason \cite{thomason1987algebraic}. The main result of this subsection shows that equivariant algebraic K-theory does not satisfy descent with respect to topologies that, like the $H$-Nisnevich topology, contain certain morphisms as coverings.
We also recall a result of Krishna and Østvær \cite{krishna2012nisnevich} that the equivariant Nisnevich topology of Definition \ref{def:equiNistop} allows K-theory to satisfy descent.
Finally, we discuss the effect of our non-descent result on the K-theory descent property of the isovariant Nisnevich topology as it is investigated in \cite{serpe2010descent}.

\begin{defi} Let $X$ be in $\Gsmk$. A quasi-coherent $G$-module $(F,\varphi)$ on $X$ is given by a quasi-coherent $\mathcal O_X$-module $F$ and an isomorphism
$$
\varphi:\alpha_X^*F\xrightarrow{\cong}pr_2^*F
$$
of $\mathcal O_{G\times X}$-modules, such that the cocycle condition
$$
(pr_{23}^*\varphi)\circ((\id\times\alpha_X)^*\varphi) = (m\times\id)^*\varphi
$$
is satisfied. $F$ is called coherent (resp.~locally free) if it is coherent (resp.~locally free) as an $\mathcal O_X$-module.
\end{defi}

Coherent $G$-modules on some $X$ in $\Gsmk$ form an abelian category $M(G,X)$ and locally free coherent $G$-modules ($G$-equivariant vector bundles) form an exact subcategory $P(G,X)$. To these exact categories we associate the simplicial nerve $BQM(G,X)$ (resp.~$BQP(G,X)$) of Quillen's $Q$-construction. Finally, denote by $\mathcal G(G,X)=\Omega BQM(G,X)$ and $K(G,X)=\Omega BQP(G,X)$ the K-theory spectra (or infinite loop spaces) associated to the exact categories of coherent $G$-modules on $X$ and to those that are locally free. In his fundamental work Thomason already shows that for a separated noetherian regular $G$-scheme $X$ the inclusion of categories induces an equivalence $K(G,X)\xrightarrow{\sim}\mathcal G(G,X)$ \cite[Theorem 5.7]{thomason1987algebraic} and that hence for such an $X$ the equivariant K-theory satisfies homotopy invariance in the sense that the projection induces an equivalence
$$
K(G,X)\to K(G,X\times \AA^n)
$$
even with respect to any linear $G$-action on $\AA^n$ \cite[Corollary 4.2]{thomason1987algebraic}.

By the origin of the use of the word \emph{motivic} in this area of mathematics, or in other words by Grothendieck's idea of what it should mean to associate a motive to a scheme, it should be considered a fundamental test for any candidate of a motivic homotopy category, whether it allows representability for a sufficient amount of cohomological theories or not. One obstacle for a theory $F$ to be representable in $\mathcal{H}(k,G)$ is that it has to satisfy (hypercover) descent with respect to the topology used to define the local model structure. This is a kind of homotopical sheaf condition which implies the compatibility of the theory $F$ with local weak equivalences. For the following we may restrict our attention to the weaker notion of \v Cech descent.

\begin{defi} An objectwise fibrant simplicial presheaf $F$ on a site $\C$ \emph{satisfies \v Cech descent} with respect to the topology on $\C$ if for any covering family $\{U_i\to X\}_i$ in $\C$ the morphism
\begin{equation}\label{eq:cech}
\begin{xy}
\xymatrix{
F(X)\ar[r]&\holim ( \prod_iF(U_i)\ar@<0.5ex>[r]\ar@<-0.5ex>[r]&\prod_{i,j}F(U_i\times_XU_j)\ar@<0.7ex>[r]\ar@<-0.7ex>[r]\ar[r]&\ldots)
}
\end{xy}
\end{equation}
is a weak equivalence of simplicial sets. An arbitrary simplicial presheaf is said to satisfy \v Cech descent if an objectwise fibrant replacement of it does.
\end{defi}

It is a straight reformulation of this definition that a simplicial presheaf $F$ satisfies \v Cech descent if and only if for any covering $\mathcal U=\{U_i\to X\}_i$ and an injective fibrant replacement $F'$ of $F$ the induced map
$$
\sset(X,F')\to \sset(\check C(\mathcal U),F')
$$
is a weak equivalence of simplicial sets.

In \cite[Theorem 5.4]{krishna2012nisnevich} Krishna and Østvær show that the presheaf of K-theory of perfect complexes on Deligne-Mumford stacks satisfies descent with respect to a version of the Nisnevich topology. Restricting the results from Deligne-Mumford stacks to the subcategory of $G$-schemes, the topology restricts to the equivariant Nisnevich topology and their results imply descent of equivariant K-theory for the equivariant Nisnevich topology (cf.~\cite[Remark 7.10]{krishna2012nisnevich}).

However, the rest of this section is devoted to showing that equivariant K-theory does not satisfy descent with respect to certain topologies, including the $H$-Nisnevich topology.

\begin{proposition}\label{prop:eqkthydescent}
Equivariant algebraic K-theory does not satisfy descent with respect to the $H$-Nisnevich topology.
\end{proposition}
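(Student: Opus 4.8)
The plan is to exhibit a single $H$-Nisnevich covering at which \v Cech descent for equivariant $K$-theory already fails, by identifying the associated descent homotopy limit with a homotopy fixed point spectrum and then invoking the classical failure of \'etale descent for algebraic $K$-theory.

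Concretely, I would fix a nontrivial finite Galois extension $L/k$ (for instance $L=\QQ(\sqrt 2)$ over $k=\QQ$ with $G=\ZZ/2$; if one wants $G$ to be arbitrary and unrelated to $k$, one replaces $\spec L$ below by the open subscheme of a faithful $G$-representation on which $G$ acts freely, and the argument is the same) and let $G=\mathrm{Gal}(L/k)$ act on $X:=\spec L\in\Gsmk$ through the Galois action. Since $L^H\subsetneq L$ for every $e\neq H\leq G$, the reasoning of Example~\ref{ex:fixedpointsofgalois} shows that $X^H=\emptyset$ for all $H\neq e$. Now consider the counit $q\colon\ind((-)^e X)\to X$ of the adjunction $(\ind,(-)^e)$; on underlying schemes this is the split surjection $\coprod_{g\in G}\spec L\to\spec L$, given on the $g$-th summand by the Galois automorphism $g$. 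Its $e$-fixed points form a split \'etale surjection, hence an ordinary Nisnevich cover, while for $H\neq e$ both $(\ind((-)^e X))^H$ and $X^H$ are empty, so $q^H$ is a Nisnevich cover trivially; thus $q$ is an $H$-Nisnevich covering. (By Lemma~\ref{lemma:equivalenteqnis} it is \emph{not} an equivariant Nisnevich covering, which is precisely why this does not contradict the descent statement recalled above, cf.~\cite{krishna2012nisnevich}.)

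The heart of the argument is the computation of the \v Cech descent object of $q$. Writing $\check C(q)_n=(\ind((-)^e X))^{\times_X(n+1)}$ for the \v Cech nerve, each $\check C(q)_n$ carries a free $G$-action, and a direct computation of the quotient gives $\check C(q)_n/G\cong X^{\times_{X/G}(n+1)}=\spec\bigl(L^{\otimes_k(n+1)}\bigr)$, the $n$-th term of the Amitsur complex of $L/k$, compatibly with the cosimplicial structure. By Thomason's identification of equivariant $K$-theory of a free $G$-scheme with the $K$-theory of its quotient~\cite{thomason1987algebraic}, this yields an equivalence of cosimplicial spectra $K(G,\check C(q)_\bullet)\simeq K\bigl(L^{\otimes_k\bullet+1}\bigr)$, whose homotopy limit is the homotopy fixed point spectrum $K(L)^{hG}$ for the Galois action. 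On the other side, $K(G,X)$ is the $K$-theory of the category of finitely generated projective $L$-modules equipped with a semilinear $G$-action, which by ordinary Galois descent is equivalent to that of finitely generated projective $k$-modules, so $K(G,X)\simeq K(k)$; tracing through the identifications, the augmentation $K(G,X)\to\holim_\Delta K(G,\check C(q)_\bullet)$ is exactly the \'etale descent map $K(k)\to K(L)^{hG}$.

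If equivariant $K$-theory satisfied \v Cech descent for the $H$-Nisnevich topology, this augmentation would be a weak equivalence; but it is not, since algebraic $K$-theory does not satisfy \'etale descent. For instance, already over $k=\RR$, $L=\CC$ the map $K(\RR)\to K(\CC)^{h\ZZ/2}$ fails to be an equivalence: the source is connective ($\RR$ being regular Noetherian), whereas the homotopy fixed point spectrum carries nonzero homotopy in negative degrees (compare the topological statement $KO\simeq KU^{h\ZZ/2}$). This contradiction proves the proposition. The main obstacle, and the only input beyond standard formalism, is the middle step: carefully matching the \v Cech nerve of the induced cover $q$ --- after passing to $G$-quotients --- with the Amitsur complex of $L/k$ and verifying that the resulting augmentation really is the \'etale descent map, so that the obstruction genuinely is the classical non-descent of $K$-theory; a secondary point is that defeating \v Cech rather than hypercover descent suffices, the former being already necessary for representability in $\mathcal H(k,G)$.
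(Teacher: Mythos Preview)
Your proof is correct and follows essentially the same approach as the paper: both use the very same covering $G\times\spec(L)\tr\to\spec(L)_{\mathrm{gal}}$ (for the paper, $L=\CC$, $k=\RR$, $G=\ZZ/2$) and identify the \v Cech descent condition with the Galois/\'etale descent map $K(k)\to K(L)^{hG}$, whose failure is classical. The only differences are cosmetic --- you reach $K(L)^{hG}$ via the Amitsur complex and Thomason's free-action identification $K(G,Y)\simeq K(Y/G)$, while the paper uses the presentation $\mathrm{Map}(EG,K(L))$ --- except at the very last step: the paper invokes the nontrivial Brauer group of $\RR$, whereas your appeal to negative homotopy via the analogy $KO\simeq KU^{h\ZZ/2}$ is only suggestive, since $K(\CC)$, unlike $KU$, is connective; the actual nonvanishing of $\pi_{<0}\bigl(K(\CC)^{h\ZZ/2}\bigr)$ is exactly what the Brauer-group class in $H^2(\ZZ/2;\CC^\times)$ provides through the homotopy fixed point spectral sequence.
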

\begin{proof}
Suppose that $K(G,-)$ satisfies descent for the $H$-Nisnevich topology on $\ZZ/2-\sm{\RR}$, then in particular $K(\ZZ/2,-)$ satisfies Cech descent for $\spec(\CC)_{\mathrm{gal}}$ and the $H$-Nisnevich cover
 $$
 \ZZ/2\times\spec(\CC)\tr\to\spec(\CC)_{\mathrm{gal}}
 $$
 induces a weak equivalence
$$
K(\ZZ/2,\spec(\CC)_{\mathrm{gal}})\xrightarrow{\sim} \holim \left( K(G,\ZZ/2\times \CC) \rightrightarrows K(\ZZ/2,(\ZZ/2\times\CC)^{\times 2})\ldots\right )
$$
as in \eqref{eq:cech}. We compute the equivariant K-theory of $G$-torsors using \cite[Proposition 3]{merkurjev2equivariant} as $K(\ZZ/2,\spec(\CC)_{\mathrm{gal}})\simeq K(\spec(\RR))$, $K(\ZZ/2,\ZZ/2\times\spec(\CC))\simeq K(\spec(\CC))$, and so on, which implies an equivalence
$$
K(\spec(\RR))\to \holim \left( K(\spec(\CC)) \rightrightarrows K(\spec(\CC)\times\spec(\CC)))\ldots\right ).
$$
Thus, the homotopy limit on the right hand side computes to
\begin{align*}
&\holim \left( K(\spec(\CC)) \rightrightarrows K(\spec(\CC)\times\spec(\CC)))\ldots\right )\\
&\simeq Map(\hocolim_n\check C(\ZZ/2\to\ast)_n ,K(\spec(\CC)))\\
&\simeq Map(EG,K(\spec(\CC))) \\
&=  K(\spec(\CC))^{hG}\simeq K^{et}(\spec(\RR)),
\end{align*}
and so we finally obtain an equivalence $K(\spec(\RR))\to K^{et}(\spec(\RR))$ which gives a contradiction, since $K^{et}(\spec(\RR))$ contains a non-zero additional information coming from the Brauer group.
\end{proof}

\begin{rem}~
 \begin{enumerate}
  \item The proof above can easily be generalized to more general field extensions. One needs to assure that there is some non-zero $l$-torsion in the Brauer group of the base field and that the $l$-completed descent spectral sequence (cf.~\cite[Corollary 1.5]{mitchell1997hypercohomology}) converges and hence allows to detect this additional $l$-torsion elements.
  \item The same proof also provides a counterexample to the main theorem of \cite{serpe2010descent} that equivariant K-theory satisfies 'isovariant' descent. In loc.~cit.~a parametrized version of scheme-theoretic isotropy is introduced as $G_X$, where $X$ is a $G$-scheme, and defined as the pullback
\begin{equation*}
\begin{xy}
\xymatrix{
G_X\ar[r]\ar[d]&G\times X\ar[d]^{\alpha\times\mathrm{pr}_X}\\
X\ar[r]^\Delta&X\times X.
}
\end{xy}
\end{equation*}
Now Serpé calls a family $\{U_i\to X \}_i$ in $\Gsmk$ an isovariant Nisnevich cover if the underlying family of schemes is a Nisnevich cover and for all $U_i\to X$ the induced morphism $G_{U_i}\to G_X$ furnishes a pullback square 
\begin{equation}\label{eq:isovariant}
\begin{xy}
\xymatrix{
G_{U_i}\ar[r]\ar[d]&G_X\ar[d]\\
U_i\ar[r]&X.
}
\end{xy}
\end{equation}
The singleton $\{f:\ZZ/2\times\spec(\CC)\tr\to\spec(\CC)_{\mathrm{gal}} \}$ defines an isovariant Nisnevich cover. This is because firstly the $G$-actions on domain and codomain are free. Therefore, the corresponding commutative square of type \eqref{eq:isovariant} is a pullback square. Secondly, $f$ is a non-equivariant Nisnevich covering, since the components of $G\times \spec(L)$ map to $\spec(L)$ along the elements of the Galois group.

Eventually, $\{f\}$ is also a counterexample to the proof of \cite[Proposition 2.7]{serpe2010descent}, since $f/G$ is the canonical map $\spec(L)\to\spec(k)$ which is not a Nisnevich cover.
\end{enumerate}
\end{rem}

\bibliographystyle{amsalpha} 
\bibliography{../ref}

\providecommand{\bysame}{\leavevmode\hbox to3em{\hrulefill}\thinspace}
\providecommand{\MR}{\relax\ifhmode\unskip\space\fi MR }
\providecommand{\MRhref}[2]{%
  \href{http://www.ams.org/mathscinet-getitem?mr=#1}{#2}
}
\providecommand{\href}[2]{#2}
\begin{thebibliography}{HKO11}

\bibitem[AGV72]{SGA4}
Michael Artin, Alexandre Grothendieck, and Jean-Louis Verdier,
  \emph{{Th{\'e}orie des topos et cohomologie {\'e}tale des sch{\'e}mas. Tome
  1: Th{\'e}orie des topos}}, vol. 269, Springer-Verlag, 1972.

\bibitem[Blu06]{blumberg2005continuous}
Andrew~J. Blumberg, \emph{{{Continuous functors as a model for the equivariant
  stable homotopy category}}}, Algebraic \& Geometric Topology \textbf{6}
  (2006), 2257--2295.

\bibitem[CJ11]{carlsson2011equivariant}
Gunnar Carlsson and Roy Joshua, \emph{{Equivariant Motivic Homotopy Theory}},
  Preprint, 2011.

\bibitem[DG70]{SGA3}
Michel Demazure and Alexandre Grothendieck, \emph{{{Sch{\'e}mas en groupes:
  s{\'e}minaire de g{\'e}om{\'e}trie alg{\'e}brique du Bois Marie, 1962/64, SGA
  3}}}, {Lecture Notes in Mathematics}, vol. 151--153, Springer-Verlag, 1970.

\bibitem[DR{\O}03]{enrichedfun}
Bj{\o}{}rn~Ian Dundas, Oliver R{\"o}ndigs, and Paul~Arne {\O}stv{\ae}r,
  \emph{{{Enriched functors and stable homotopy theory}}}, Documenta Math
  \textbf{8} (2003), 409--488.

\bibitem[Fog73]{fogarty1973fixed}
John Fogarty, \emph{{{Fixed Point Schemes}}}, American Journal of Mathematics
  (1973), 35--51.

\bibitem[Gro67]{EGA4-4}
Alexandre Grothendieck, \emph{{El{\'e}ments de g{\'e}om{\'e}trie alg{\'e}brique
  (r{\'e}dig{\'e}s avec la collaboration de Jean Dieudonn{\'e}): IV. Etude
  locale des sch{\'e}mas et des morphismes de sch{\'e}mas, Quatri{\`e}me
  partie}}, vol.~32, Publications Math{\'e}matiques de l'Institut des Hautes
  {\'E}tudes Scientifiques, 1967.

\bibitem[Her12]{herrmann2012stable}
Philip Herrmann, \emph{{Stable Equivariant Motivic Homotopy Theory and Motivic
  Borel Cohomology}}, Ph.D. thesis, Universit{\"a}t Osnabr{\"u}ck, June 2012.

\bibitem[Hir03]{hirsch}
Philip~Steven Hirschhorn, \emph{{Model Categories and Their Localizations}},
  {Mathematical Surveys and Monographs}, American Mathematical Society, 2003.

\bibitem[HKO11]{Hu2011homotopy}
Po~Hu, Igor Kriz, and Kyle Ormsby, \emph{{The homotopy limit problem for
  {H}ermitian K-theory, equivariant motivic homotopy theory and motivic Real
  cobordism}}, Advances in Mathematics \textbf{228} (2011), no.~1, 434--480.

\bibitem[Hov01]{hovey2001spectra}
Mark Hovey, \emph{{{Spectra and symmetric spectra in general model
  categories}}}, Journal of Pure and Applied Algebra \textbf{165} (2001),
  no.~1, 63--127.

\bibitem[H{\O}V12]{HOV}
Jeremiah Heller, Paul~Arne {\O}stv{\ae}r, and Mircea Voineagu,
  \emph{{Equivariant Cycles and Cancellation for Motivic Cohomology}}, July
  2012.

\bibitem[Isa05]{flasque}
Daniel~C. Isaksen, \emph{{Flasque model structures for simplicial presheaves}},
  K-Theory \textbf{36} (2005), no.~3-4, 371--395 (2006).

\bibitem[Jar00]{MSS}
John~Frederick Jardine, \emph{{Motivic Symmetric Spectra}}, Documenta
  Mathematica \textbf{5} (2000), 445--552.

\bibitem[K{\O}12]{krishna2012nisnevich}
Amalendu Krishna and Paul~Arne {\O}stv{\ae}r, \emph{{Nisnevich descent for
  K-theory of Deligne-Mumford stacks}}, K-Theory \textbf{9} (2012), 291--331.

\bibitem[LMS86]{lewis1986equivariant}
Lemoine~G. Lewis, Jon~Peter May, and Mark Steinberger, \emph{{Equivariant
  stable homotopy theory}}, {Lecture notes in mathematics ; 1213}, Springer,
  1986.

\bibitem[Lun73]{luna1973slices}
Domingo Luna, \emph{{{Slices {\'e}tales}}}, M{\'e}moires de la Soci{\'e}t{\'e}
  Math{\'e}matique de France \textbf{33} (1973), 81--105.

\bibitem[Lyd98]{lydakis}
Manos Lydakis, \emph{{Simplical functors and stable homotopy theory}}, 1998.

\bibitem[Man04]{mandell2004equivariant}
Michael~A. Mandell, \emph{{{Equivariant symmetric spectra}}}, {Homotopy theory:
  relations with algebraic geometry, group cohomology, and algebraic K-theory:
  an international Conference on Algebraic Topology, March 24-28, 2002,
  Northwestern University}, vol. 346, Amer Mathematical Society, 2004,
  pp.~399--452.

\bibitem[Mer05]{merkurjev2equivariant}
Alexander~S. Merkurjev, \emph{{Equivariant K-theory}}, {Handbook of K-theory},
  vol.~2, ch.~Part V.2, pp.~925--954, Springer-Verlag, 2005.

\bibitem[Mit97]{mitchell1997hypercohomology}
Stephen~A. Mitchell, \emph{{Hypercohomology spectra and {T}homason's descent
  theorem}}, Algebraic K-theory (Toronto, ON, 1996) \textbf{16} (1997),
  221--277.

\bibitem[MV99]{mv99}
Fabien Morel and Vladimir Voevodsky, \emph{{{$\mathbb A^1$}-homotopy theory of
  schemes}}, Publications Math{\'e}matiques de l'Institut des Hautes {\'E}tudes
  Scientifiques \textbf{90} (1999), 45--143.

\bibitem[Ser10]{serpe2010descent}
Christian Serp{\'e}, \emph{{Descent properties of equivariant K-theory}}, Arxiv
  preprint arXiv: 1002.2565 (2010).

\bibitem[Tho87]{thomason1987algebraic}
R.W. Thomason, \emph{{Algebraic K-theory of group scheme actions}}, Algebraic
  Topology and Algebraic K-theory (Ann. Math. Stud. 113) Princeton (1987),
  539--563.

\bibitem[Voe01]{voevodsky2001lectures}
Vladimir Voevodsky, \emph{{{Lectures on motivic cohomology 2000/2001 (written
  by Pierre Deligne)}}}, Preprint, November \textbf{19} (2001).

\bibitem[Wri76]{wright1976flat}
C.K. Wright, \emph{{{A Flat Group Scheme $G$ for which $X^G$ is not
  Representable}}}, Journal of the London Mathematical Society \textbf{2}
  (1976), no.~2, 319.

\end{thebibliography}
\end{document}